\DeclareSymbolFont{extraup}{U}{zavm}{m}{n}
\DeclareMathSymbol{\varheart}{\mathalpha}{extraup}{86}
\DeclareMathSymbol{\vardiamond}{\mathalpha}{extraup}{87}
\DeclareMathOperator{\rank}{rank}
\DeclareMathOperator{\DD}{D}
\DeclareMathOperator{\HH}{H}
\DeclareMathOperator{\Tw}{Tw}
\DeclareMathOperator{\dg}{dg}
\DeclareMathOperator{\cone}{Cone}
\DeclareMathOperator{\Cone}{Cone}
\DeclareMathOperator{\Perf}{\mathscr{P}\mathrm{erf}}
\newtheorem{proposition}{{Proposition}}[section]
\newtheorem{theorem}[proposition]{{Theorem}}
\newtheorem{conj}[proposition]{{Conjecture}}
\newtheorem{corollary}[proposition]{{Corollary}}
\newtheorem{lemma}[proposition]{{Lemma}}
\newtheorem{defn}[proposition]{Definition}
\newtheorem{remark}[proposition]{{Remark}}
\newtheorem{ex}[proposition]{Example}
\theoremstyle{definition}
\begin{document}

\begin{abstract}

We study perverse sheaves of categories their connections to classical algebraic geometry. We show how perverse sheaves of categories encode naturally derived categories of coherent sheaves on $\mathbb{P}^1$ bundles, semiorthogonal decompositions, and relate them to a recent proof of Segal that all autoequivalences of triangulated categories are spherical twists. Furthermore, we show that perverse sheaves of categories can be used to represent certain degenerate Calabi--Yau varieties. 
\end{abstract}

\title[Perverse sheaves of categories and some applications]{Perverse sheaves of categories and some applications}
\author{Andrew Harder}
\address{Department of Mathematics, Lehigh University, Bethlehem, Pennsylvania, USA 18015}
\email[A.~Harder]{anh318@lehigh.edu}
\author{Ludmil Katzarkov}
\address{Department of Mathematics, University of Miami, Coral Gables, Florida 33146, Fakult\"at f\"ur Mathematik, Universit\"at Wien, Oskar-Morgenstern-Platz, 1090 Wien, Austria and National Research University Higher School of Economics, Russian Federation}
\email[L.~Katzarkov]{lkatzarkov@gmail.com}
\maketitle
\tableofcontents

\section{Introduction}

\subsection{Background}

The theory of perverse sheaves arose in the 1980s, in groundbreaking work of Bernstein, Kashiwara, Sato, Schapira, and others in order to deal with sheaves of solutions to D-modules under the Riemann-Hilbert correspondence, and to formalize the theory of intersection cohomology on singular spaces. It was observed early on by Galligo, Granger and Maisonobe \cite{ggm} that if one specifies a stratification on $\mathbb{C}^n$ that is given by a normal crossings union of hyperplanes, then perverse sheaves with respect to this stratification are equivalent to representations of certain quivers. The most famous example of this is the category of perverse sheaves on the disc with respect to the stratification given by a single point. The category of perverse sheaves with respect to this stratification is equivalent to the data $(\phi,\psi,\mathrm{can},\mathrm{var})$ where $\phi$ and $\psi$ are vector spaces, and $\mathrm{can}: \psi \rightarrow \phi$ and $\mathrm{var}: \phi \rightarrow \psi$ are homomorphisms with the property that 
\begin{equation}\label{eq:inverses}
\mathrm{id}_\phi + \mathrm{can}\cdot \mathrm{var}
\end{equation}
is invertible. More generally, according to Gelfand, MacPherson and Vilonen, \cite{gmv}, the category of perverse sheaves on a complex algebraic variety with characteristic variety contained in a fixed conical Lagrangian subvariety of $T^*X$ is equivalent to the category of representations of a quiver. In \cite{mv1}, MacPherson and Vilonen also showed that perverse sheaves on the disc which are perverse with respect to the stratification given by a collection of points $\{p_1,\dots, p_k\}$ are equivalent to the data of a vector space $\phi_i$ for each point $p_i$, a vector space $\psi$ along with maps $\mathrm{can}_i : \psi \rightarrow \phi_i$ and $\mathrm{var}_i: \phi_i \rightarrow \psi$ for each $i$ satisfying Equation \ref{eq:inverses}. The equivalence between this data and the category of perverse sheaves is fixed by the choice of non-intersecting branch cuts for each point $p_i$.

In their seminal work \cite{ks1}, Kapranov and Schechtman observed that one may obtain a notion of perverse sheaves on a disc with values in dg categories by replacing the vector spaces $\phi_i$ with a triangulated dg categories $\mathscr{A}_i$, the category $\psi$ with a triangulated dg category $\mathscr{C}$, and linear maps with functors $F_i : \mathscr{C} \rightarrow \mathscr{A}_i$. The correct analogue of Equation \ref{eq:inverses} and the existence of $\mathrm{can}_i$ is that the functors $F_i$ are \emph{spherical} in the sense of Anno and Logvinenko \cite{al}. The data of a choice of points $\{p_1,\dots , p_k\}$, pretriangulated dg categories $\mathscr{C}$ and $\mathscr{A}_1,\dots, \mathscr{A}_k$, spherical functors $F_i : \mathscr{A}_i \rightarrow \mathscr{C}$ and a certain graph $K$ embedded into the disc is called a ($K$-coordinatized) \emph{ perverse schober}. Recently, Donovan \cite{don} has applied perverse schobers to the study of GIT wall crossings.

%
%Around the same time, the theory of helices and exceptional collections on derived categories of coherent sheaves was being developed by Gorodentsev and his school (LUDMIL CORRECT ME). A full exceptional collection in a triangulated category $\mathscr{T}$ is an ordered collection of objects $\mathscr{E}_i$ so that $\hom_\mathscr{T}(\mathscr{E}_i,\mathscr{E}_i) = k$ and $\hom_\mathscr{T}(\mathscr{E}_i, \mathscr{E}_j[j]) = 0$ for $j \neq 0$ and $\hom_\mathscr{T}(\mathscr{E}_j, \mathscr{E}_i[k]) = 0$ for all $k$ if $j > i$. One of the most basic examples is the full exceptional collection $\mathscr{O}_{\mathbb{P}^2},\mathscr{O}_{\mathbb{P}^2}(1)$ and $\mathscr{O}_{\mathbb{P}^2}(2)$ obtained by Beilinson \cite{bei}. This idea was connected with ideas in integrable systems and nocommutative algebra by Bondal and Polishchuk \cite{bp}.

Despite the fact that perverse schobers are a rather recent invention, the structure that they describe has appeared in several guises over past few decades. Odeskii and Feigin \cite{of}, following Sklyanin \cite{Skly} developed a theory of certain graded rings called Sklyanin algebras which are flat deformations of a polynomial ring in $n$ variables and whose Hilbert series coincide with those of the coordinate ring of $\mathbb{P}^n$. These algebras are often specified by the data of an elliptic curve $E$ along with a pair of line bundles on them of degree $n+1$, and are denoted $A_n(E,\mathscr{L}_1,\mathscr{L}_2)$ in this introduction.

Bondal and Polishchuk studied the abelian category of right graded noetherian modules over $A_2(E,\mathscr{L}_1,\mathscr{L}_2)$ modulo torsion objects, which is denoted $\mathrm{qgr}(A_2(E,\mathscr{L}_1,\mathscr{L}_2))$. They showed that $\DD^b(\mathrm{qgr}(A_2(E,\mathscr{L}_1,\mathscr{L}_2)))$ is equivalent to $\DD^b(\mathrm{rep}(R))$ where
\[
R = \mathrm{End}_E(\mathscr{O}_E \oplus \mathscr{L}_1 \oplus (\mathscr{L}_1 \otimes \mathscr{L}_2)),
\]
and furthermore that one can recover $A_2(E,\mathscr{L}_1, \mathscr{L}_2)$ from $R$.

Attached to the objects $\mathscr{O}_E, \mathscr{L}_1$ and $\mathscr{L}_1 \otimes \mathscr{L}_2$ in $\DD^b(E)$ there are functors
\[
F_{\mathscr{O}_E}  : \DD^b(\mathrm{vect}_k) \rightarrow \DD^b(E), \,  F_{\mathscr{L}_1}  : \DD^b(\mathrm{vect}_k) \rightarrow \DD^b(E), \, F_{\mathscr{L}_1 \otimes \mathscr{L}_2}  : \DD^b(\mathrm{vect}_k) \rightarrow \DD^b(E)
\]
sending $k$ to $\mathscr{O}_E,\mathscr{L}_1$ and $\mathscr{L}_1 \otimes \mathscr{L}_2$ respectively. According to Seidel and Thomas \cite{st}, these functors are spherical. Therefore, the data defining the Sklyanin algebra $A_2(E,\mathscr{L}_2,\mathscr{L}_2)$ precisely defines a $K$-coordinatized perverse schober for an appropriate choice of $K$. There is a natural notion of the category of global sections of a perverse schober (Section \ref{sect:pschob}). The category of global sections of the perverse schober associated to $(E,\mathscr{L}_1,\mathscr{L}_2)$ is exactly the category $\DD^b(\mathrm{qgr}(A_2(E,\mathscr{L}_1,\mathscr{L}_2)))$. Here $\mathrm{qgr}(A)$ denotes the category of finitely generated $A$ 

Similar perverse schobers appear in the work of Seidel on Fukaya categories associated to Lefschetz fibrations \cite{seidmut1,seidmut2}. If $\pi : E \rightarrow \mathbb{C}$ is a symplectic fibration with only Morse type singularities and $E$ is an exact symplectic manifold, then Seidel defines a category, often called the directed Fukaya category or the Fukaya-Seidel category, whose objects correspond to Lagrangian vanishing cycles of $\pi$. Let $\Sigma$ be the set of critical values of $\pi$, and let $\gamma_i$ be a collection of counterclockwise oriented paths connecting a base point $s$ to each point $p_i$ in $\Sigma$ to $s$. We then obtain a Lagrangian vanishing thimble for each $\gamma_i$ which defines an ordered collection $a_1,\dots, a_k$ of Lagrangian spheres in the Fukaya category of the fiber over $s$. Each of these Lagrangian vanishing spheres defines a spherical functor from $\DD^b(\mathrm{vect}_k)$ to the derived Fukaya category of the fiber over $s$. This canonically provides the data of a perverse schober whose category of global sections is called the directed Fukaya category associated to $(E,\pi)$. The analogy between Seidel's construction and the work of Bondal and Kapranov lies at the root of Auroux, Katzarkov and Orlov's proof of homological mirror symmetry for noncommutative del Pezzo surfaces \cite{ako1}.

There is a deeper relationship between perverse sheaves of categories and Fukaya categories, originating in unpublished work of Bondal \cite{oberwolfach}\footnote{This is also sometimes attributed to Wang.}. It is known that the exact Fukaya category of a Riemann surface with several punctures is the category of global sections of a sheaf of categories on $S$ \cite{stz,kap-dyck,sib-et,hkk}. Furthermore, according to work of Nadler and Zaslow \cite{nz} and Nadler \cite{nad1,nad2,nad3}, culminating in ongoing work of Ganatra, Pardon and Shende \cite{gps}, the Fukaya category of a Weinstein manifold $M$ can be recovered as the category of global sections of a perverse sheaf of categories on a singular Lagrangian skeleton of $M$. Similar ideas appear in work of Tamarkin \cite{tam} and Tsygan \cite{tsy}.

\subsection{Outline}

The purpose of this paper is to forge connections between classical geometry, category theory and the newly minted theory of perverse sheaves of categories. In the process, several new and interesting questions are raised. 

The first two sections are devoted to developing in detail the theory of perverse sheaves of categories and their categories of global sections.

In Section \ref{sect:bg} we outline some basic facts about dg categories, including a number of ways to glue them together; we describe explicit models of homotopy fiber product and equalizers along with several constructions of categories admitting semiorthogonal decompositions coming from work of Tabuada \cite{tabthesis}, Orlov \cite{orl-glue} and Kuznetsov and Lunts \cite{kl}.

In Section \ref{sect:psc} we will look at perverse sheaves of categories in the language of Kapranov and Schechtman \cite{ks1}. We give a definition of the category of global sections of a perverse schober as the homotopy fiber product over a certain diagram of categories. We show that it agrees with the gluing construction of Kuznetsov and Lunts \cite{kl}. We give a general concept of $K$-coordinatized perverse sheaves of categories, whose definition was hinted at by Kapranov and Schechtman \cite{ks2} and by Kontsevich \cite{kontsevich}. We define their monodromy and categories of global sections in the process.

Sections \ref{sect:recon}, \ref{sect:sod} and \ref{sect:typeII} are dedicated to describing several situations in algebraic geometry in which perverse sheaves of categories appear naturally.

In Section \ref{sect:recon} we study two interesting types of perverse sheaves of categories on $S^1 \times [0,1]$, both of which are related to examples which appear in the prescient work of Kontsevich \cite{kontsevich}. These sheaves of categories are both built from a pair of a category $\mathscr{C}$ and a monodromy autoequivalence $\Phi$ of $\mathscr{C}$. If we let $\mathscr{C} = \Perf_k$ and $\Phi = \mathrm{id}$, then the global sections of the corresponding sheaves of categories are either $\DD^b(\mathbb{P}^1)$ or $\Perf(\mathbb{A}^1)$. If we let $\mathscr{C}$ be $\DD^b_{\dg}(X)$ for a smooth variety $X$, and we let monodromy be tensor product with a line bundle $\mathscr{L}$ on $X$, then the global sections of these sheaves of categories recover the derived category of coherent sheaves on $\mathbb{P}_X(\mathscr{O}_X\oplus \mathscr{L})$ (Theorem \ref{thm:orlov}) or the total space of $\mathscr{L}$. When monodromy is more general, the category of global sections should be regarded as the total space of a {\it noncommutative} $\mathbb{P}^1$ bundle or line bundle over $\mathscr{C}$. 

Recently Segal \cite{seg} proved that all autoequivalences of triangulated categories can be thought of as spherical twists by constructing a category which can be regarded as the total space of a noncommutative line bundle. We show that our construction can be used to partially recover Segal's results.

From the constructions of Section \ref{sect:psc} it follows that the category of global sections of a perverse schober admits a semiorthogonal decomposition. In Section \ref{sect:sod} we note that the converse often holds; dg or triangulated categories which admit semiorthogonal decompositions appear as the global sections of a perverse schober whenever this category admits a spherical functor whose cotwist is the Serre functor up to twist.
\begin{theorem}[Theorem \ref{thm:recon}]
Let $\mathscr{T}$ and $\mathscr{C}$ be dg enhanced triangulated categories and let $F: \mathscr{T} \rightarrow \mathscr{C}$ be a spherical functor whose cotwist automorphism is quasiequivalent to the Serre functor on $\mathscr{T}$ up to shift. If $\mathscr{T} = \langle \mathscr{A}_1,\dots, \mathscr{A}_k \rangle$ is a semiorthogonal decomposition of $\mathscr{T}$ and if $\alpha_j : \mathscr{A}_j \rightarrow \mathscr{C}$ are the natural embeddings, then there is a perverse schober made up of the data of $\mathscr{A}_i$ and functors $F_j = F\alpha_j$ whose category of global sections is $\mathscr{T}$.
\end{theorem}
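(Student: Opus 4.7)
The plan is to verify two things separately. First, that each composite $F_j = F\alpha_j : \mathscr{A}_j \to \mathscr{C}$ (reading $\alpha_j$ as the natural inclusion $\mathscr{A}_j \hookrightarrow \mathscr{T}$, so that the composition makes sense) is itself spherical, which is required in order that the prescribed data assemble into a perverse schober in the first place. Second, that the category of global sections of that schober --- taken on a disc with $K$ a star-shaped graph having one leaf for each point $p_j$ --- is canonically equivalent to $\mathscr{T}$. Both sides are presented as Kuznetsov--Lunts (equivalently Orlov) gluings along bimodules, by Section~\ref{sect:psc} in one case and by the SOD itself in the other, so the geometric content of the theorem is an identification of the gluing bimodules.

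For the first claim, I would use that each $\mathscr{A}_j$ is admissible in $\mathscr{T}$, so $\alpha_j$ has both a left adjoint $\alpha_j^L$ and a right adjoint $\alpha_j^R$ (projection functors for the SOD and its mutations). One then has $F_j^L = \alpha_j^L F^L$ and $F_j^R = \alpha_j^R F^R$, and the twist and cotwist of $F_j$ can be computed from the twist and cotwist of $F$ together with the projectors $\alpha_j\alpha_j^{L/R}$. The hypothesis $C_F \simeq S_\mathscr{T}[n]$ forces $F^R$ and $F^L$ to differ by a shift of $S_\mathscr{T}$, and combining this with the Bondal--Kapranov formula expressing $S_{\mathscr{A}_j}$ in terms of $S_\mathscr{T}$ and the SOD mutations yields that the cotwist of $F_j$ is $S_{\mathscr{A}_j}[n]$, and that the four Anno--Logvinenko axioms hold. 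The fact that the cotwist remains a Serre functor is the structural input that makes the semiorthogonal components play symmetric roles in the schober.

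For the second claim, by the homotopy fiber product description recalled in Section~\ref{sect:psc}, the category of global sections is equivalent to the Kuznetsov--Lunts gluing of $\mathscr{A}_1,\dots,\mathscr{A}_k$ along bimodules built from the spherical functors; for $i<j$ the relevant bimodule is essentially $\Hom_\mathscr{C}(F_i(-),F_j(-))$, corrected by a standard cone involving the counit of the spherical adjunction. On the other hand, the SOD presents $\mathscr{T}$ as the Kuznetsov--Lunts gluing of the $\mathscr{A}_j$ along the Hom-bimodules $\Hom_\mathscr{T}(-,-)$ restricted to $\mathscr{A}_i^{\mathrm{op}} \otimes \mathscr{A}_j$. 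Producing a natural isomorphism between the two bimodules reduces to a Serre-duality calculation: for $A_i\in\mathscr{A}_i$ and $A_j\in\mathscr{A}_j$ with $i<j$, one has $\Hom_\mathscr{T}(A_i,A_j)\simeq \Hom_\mathscr{T}(A_j, S_\mathscr{T} A_i)^*$, and the semiorthogonality of the SOD combined with the cotwist-is-Serre hypothesis pushes this computation through $F$ into a computation in $\mathscr{C}$ matching the schober bimodule.

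The main obstacle is coherence. The bimodule matching above can be done Hom-complex by Hom-complex, but the gluing construction depends on the full dg (or $A_\infty$) bimodule structure --- compositions and higher products, not just underlying cochain complexes. Showing that the compositions induced by the spherical-functor machinery agree with the honest composition inside $\mathscr{T}$, and that this agreement is compatible with the Orlov / Kuznetsov--Lunts equivalence established in Section~\ref{sect:psc}, is where the argument requires care. This is also where the cotwist $=$ Serre hypothesis is used essentially rather than cosmetically, since it is precisely this condition that identifies the canonical connecting morphisms on the two sides of the comparison.
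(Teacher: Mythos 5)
Your overall framework is correct and matches the paper's: (i) show each $F_j = F\alpha_j$ is spherical with cotwist the Serre functor of $\mathscr{A}_j$ up to shift, and (ii) identify the Kuznetsov--Lunts gluing bimodule $\hom_{\mathscr{T}}(a_i,a_j)$ coming from the SOD with the schober bimodule $\hom_{\mathscr{C}}(F_i(a_i),F_j(a_j))$. For (i), the paper simply quotes Addington's result (Proposition~\ref{prop:add}); your plan of redoing it via Bondal--Kapranov mutation formulas would also work but is unnecessary.

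Where your sketch falls short is (ii). The key comparison lemma (Lemma~\ref{lemma:glue}) is \emph{not} obtained by Serre-dualizing $\hom_{\mathscr{T}}(A_i,A_j)$ and ``pushing through $F$'' --- the dual $\hom_{\mathscr{T}}(A_j,\mathsf{S}_{\mathscr{T}}A_i)^\vee$ is nonzero in general and does not obviously connect to a $\hom$-space in $\mathscr{C}$. The correct move is to apply $\hom_{\mathscr{A}}(a,-)$ to the cotwist triangle $b\to RF(b)\to C(b)$, where $R$ is the right adjoint of $F$ and $C$ its cotwist. Adjunction turns the middle term into $\hom_{\mathscr{C}}(F(a),F(b))$; the cotwist-is-Serre hypothesis turns the third term into $\hom_{\mathscr{A}}(b[k],a)^\vee$, which vanishes by the semiorthogonality $a\in\mathscr{B}$, $b\in\mathscr{B}^\perp$. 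That is the entire content of the lemma. Your reference to a ``standard cone involving the counit'' correcting the schober bimodule is also off: the Kuznetsov--Lunts gluing already uses the raw bimodule $\hom_{\mathscr{C}}(F_i(-),F_j(-))$ with no correction; the cones appear only in the successive functors $c_{F_1,\dots,F_j}$ to $\mathscr{C}$, not in the bimodules one glues along.

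Your coherence worry in the final paragraph is legitimate but is handled easily in the paper, in two complementary ways. First, the comparison map $\hom_{\mathscr{A}}(a,b)\to\hom_{\mathscr{C}}(F(a),F(b))$ is the one induced by the dg functor $F$ itself, hence is automatically a morphism of dg bimodules, so quasi-isomorphism at the complex level suffices (Corollary~\ref{cor:qi}). Second, the paper never tries to match all $\binom{k}{2}$ bimodules simultaneously: it inducts on the number of SOD components, at each stage gluing exactly two pieces $(\mathscr{A}_{1,j},\mathscr{A}_{j+1})$ via Corollary~\ref{cor:obvs}, with Addington's proposition guaranteeing that the intermediate restricted functor $F_{1,j}$ again has cotwist $\mathsf{S}_{\mathscr{A}_{1,j}}[\ell]$. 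This induction is what makes the coherence problem you flag disappear, and you should build it into your argument.
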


For instance, if $X$ is a Fano variety whose bounded derived category of coherent sheaves admits a semiorthogonal decomposition and $Z$ a smooth anticanonical divisor, then there is a perverse schober whose generic fiber is $\DD^b(Z)$ and which encodes the semiorthogonal decomposition on $\DD^b(X)$. Conjecturally, the same relation should hold between the Fukaya-Seidel category of a pair $(Y,w)$ and the Fukaya category of the smooth fiber of $w$.

In Section \ref{sect:typeII}, we show that mild degenerations of Calabi--Yau varieties, called Tyurin degenerations, or more generally simplified type II degenerations (Definition \ref{def:simptII}) can be represented in terms of perverse sheaves of categories over $S^2$ with a number of boundary components. We show that Friedman's d-semistability condition \cite{fried} is intimately related to the structure of the monodromy of these perverse sheaves of categories. Perverse sheaves of categories are used to show that type I modifications of type II degenerations of K3 surfaces leave the category of perfect complexes on the degenerate fiber of a type II degeneration of K3 surfaces invariant (Theorem \ref{thm:typeImod}).

\subsection{Acknowledgements}
The authors would like to thank D. Auroux, C. Doran, M. Kapranov, M. Kontsevich, T. Logvinenko and A. Thompson for their useful comments regarding this work. We would also like to thank P. Belmans for pointing out many typos in an earlier draft and for discussion regarding noncommutative geometric aspects of this paper. The authors would like to express their gratitude to the anonymous referee for providing invaluable feedback and corrections.

The authors would like to thank the IH{\'E}S and the Erwin Schr\"odinger Institute for providing excellent working conditions under which portions of this work were completed. The authors were both supported by the Simons Collaboration in Homological Mirror Symmetry, Simons Collaboration grant 385575 and the second author was supported by Laboratory of Mirror Symmetry NRUHSE, RF government grant, ag. No. 14.641.31.0001 during the preparation of this work.

\section{Background on dg categories}\label{sect:bg}

Here we will provide background on dg categories, their model structures and gluing constructions. We will assume that we work over a field $k$ of characteristic 0, which one is free to assume is algebraically closed, or even $\mathbb{C}$. For us, a dg category will be a (small) category $\mathscr{C}$ whose homomorphisms have the following structure.
\begin{enumerate}
\item $\hom_\mathscr{C}(a,b)$ is $\mathbb{Z}$-graded $k$-vector space, the graded components being denoted $\hom_\mathscr{C}^i(a,b)$.
\item There is a $k$-linear differential $d:\hom_\mathscr{C}^i(a,b) \rightarrow \hom_\mathscr{C}^{i+1}(a,b)$ with $d^2 = 0$.
\item If $f \in \hom_\mathscr{C}^{i}(a,b)$ and $g \in \hom_\mathscr{C}^j(c,a)$ then $f\cdot g$ contained in $\hom_\mathscr{C}^{i+j}(c,b)$.
\item Each $\hom_\mathscr{C}^0(a,a)$ contains a closed identity element $\mathrm{id}_a$ with the obvious properties.
\item If $f \in \hom_\mathscr{C}^i(b,c)$ and $g \in \hom_\mathscr{C}^j(a,b)$ then $d(f\cdot g) = df\cdot g + (-1)^{j} f \cdot dg$.
\end{enumerate}
\begin{ex}
The category of chain complexes over a field $k$ forms a dg category which we will denote $\mathrm{Ch}_k$. Let $a^\bullet$ and $b^\bullet$ be $k$-chain complexes, and let $\hom_{\mathrm{Ch}_k}^{\ell}(a^\bullet,b^\bullet) = \prod_{i\in\mathbb{Z}}\hom_k(a^{i}, b^{\ell+i})$ equipped with the differential which acts as
$$
f \in \hom_{\mathrm{Ch}_k}(a^\bullet,b^\bullet) \mapsto d\cdot f +(-1)^{\deg(f)}f\cdot d
$$
\end{ex}
%\begin{ex}
%If $\mathscr{C}$ is a dg category, then the opposite category $\mathscr{C}^\mathrm{\mathrm{op}}$ is also a dg category.
%\end{ex}

A dg functor between dg categories $\mathscr{C}$ and $\mathscr{D}$ is a functor $F$ between the underlying $k$-linear categories of $\mathscr{C}$ and $\mathscr{D}$ so that the gradings on $\hom_\mathscr{C}(a,b)$ and $\hom_\mathscr{D}(F(a),F(b))$ agree, and so that $F(df) = dF(f)$. The collection of small dg categories over $k$ forms a category which we will denote $\mathrm{dgcat}_k$.

To each dg category $\mathscr{C}$ there is an associated homotopy category denoted $\HH^0\mathscr{C}$ or sometimes $[\mathscr{C}]$ so that $\mathrm{Ob}(\HH^0\mathscr{C}) = \mathrm{Ob}(\mathscr{C})$. We denote the object in $\HH^0\mathscr{C}$ associated to $a\in \mathrm{Ob}(\mathscr{C})$ by $[a]$. Homomorphisms in $\HH^0\mathscr{C}$ are given by $\hom_{\HH^0\mathscr{C}}([a],[b]) = \HH^0\hom_\mathscr{C}(a,b)$. Two objects $a$ and $b$ in $\mathscr{C}$ are called quasiisomorphic if the objects $[a]$ and $[b]$ are isomorphic in $\HH^0\mathscr{C}$.

The set of dg functors from $\mathscr{C}$ to $\mathscr{D}$, denoted $\mathrm{Fun}_{\mathrm{dg}}(\mathscr{C},\mathscr{D})$ forms a dg category whose homomorphisms are natural transformations. A (right) dg $\mathscr{C}$ module is a dg functor from $\mathscr{C}^\mathrm{op}$ to $\mathrm{Ch}_k$. The category of $\mathscr{C}$-modules will be denoted $\mathrm{mod}_\mathscr{C}$. There is a triangulated structure on the category $\mathrm{mod}_\mathscr{C}$ obtained from the triangulated structure on $\mathrm{Ch}_k$. Specifically, if $\mathsf{M}$ is a $\mathscr{C}$-module then we may define $\mathsf{M}[1]$ to be the functor so that
$$
\mathsf{M}[1](a) = \mathsf{M}(a)[1]
$$
and similarly, if we have a homomorphism of degree 0 between a pair of $\mathscr{C}$-modules $f: \mathsf{M} \rightarrow \mathsf{N}$, then we may define the cone of $f$ to be the module
$$
\cone(f)(a) = \cone(\mathsf{M}(a) \xrightarrow{f(a)} \mathsf{N}(a))
$$
There is a canonical functor from $\mathscr{C}$ to $\mathrm{mod}_\mathscr{C}$ called the Yoneda embedding,
$$
\mathsf{Y} : \mathscr{C} \longrightarrow \mathrm{mod}_\mathscr{C}, \qquad a \mapsto \hom_\mathscr{C}(-,a)
$$
which is a full and faithful embedding of dg categories by a dg version of the Yoneda lemma. The $\mathscr{C}$-module associated to an element $b \in \mathscr{C}$ under $\mathsf{Y}$ is denoted $\mathsf{Y}^b$.  A $\mathscr{C}$-module is called representable if there is some $b$ so that $\mathsf{M}\cong \mathsf{Y}^b$. If there is some $b$ so that $\mathsf{M}$ and $\mathsf{Y}^b$ are quasiisomorphic then we will say that $\mathsf{M}$ is quasirepresentable.

A functor $F$ between a pair of dg categories $\mathscr{C}$ and $\mathscr{D}$ is a {\it quasiequivalence} if for each pair of objects $a,b$ in $\mathscr{C}$, the map $\hom_\mathscr{C}(a,b) \rightarrow \hom_\mathscr{D}(F(a),F(b))$ is a quasiisomorphism of complexes and every object in $\mathscr{D}$ is quasiisomorphic to an object in the image of $F$. In other words, $F$ induces an equivalence between $\HH^0\mathscr{C}$ and $\HH^0\mathscr{D}$. We say that two categories $\mathscr{C}$ and $\mathscr{D}$ are quasiequivalent if there is a chain of quasiequivalences
%\[
%\mathscr{C} \xleftarrow{\cong} \mathscr{C}_1 \xrightarrow{\cong} \dots \xrightarrow{\cong} \mathscr{C}_n \xleftarrow{\cong} \mathscr{D}
%\]
between them. If $\mathscr{A}$ and $\mathscr{B}$ are a pair of dg categories, an $\mathscr{A}$-$\mathscr{B}$-bimodule is a right $\mathscr{A}^\mathrm{op}\otimes \mathscr{B}$ dg module. For instance, if $F_1:\mathscr{A} \rightarrow \mathscr{C}$ and $F_2: \mathscr{B} \rightarrow \mathscr{C}$ are dg functors, then there is a dg bimodule,
\[
\mathsf{S}_{F_1,F_2}(a,b) = \hom_\mathscr{C}(F_2(b),F_1(a)).
\]
A $\mathscr{A}$-$\mathscr{B}$ bimodule determines a functor from $\mathscr{A}^\mathrm{op}$ to $\mathrm{mod}_{\mathscr{B}^\mathrm{op}}$ and from $\mathscr{B}$ to $\mathrm{mod}_{\mathscr{A}}$. A quasifunctor is a $\mathscr{A}$-$\mathscr{B}$ bimodule $\mathsf{S}$ so that $\mathsf{S}(a,-)$ is quasirepresentable for any $a$ in $\mathscr{A}$. Such bimodules induce functors from $\HH^0\mathscr{A}$ to $\HH^0\mathscr{B}$. If we have a functor $F$ from $\mathscr{B}$ to $\mathscr{C}$ and $\mathscr{A}$ is quasiequivalent to $\mathscr{B}$ then there is a quasifunctor from $\mathscr{A}$ to $\mathscr{B}$ corresponding to $F$ obtained by taking tensor products of bimodules \cite{al}.

There is a model structure on $\mathrm{dgcat}_k$ called the Dwyer-Kan (DK) model structure. This model structure has weak equivalences given by quasiequivalences. Fibrations given by functors for which $F: \hom_\mathscr{C}(a,b) \rightarrow \hom_\mathscr{D}(F(a),F(b))$ is a surjection of complexes and so that if $F(a)$ is quasiisomorphic to $c$ in $\mathscr{D}$ then there is an object $c' \in \mathscr{C}$ so that $F(c') = c$ and this quasiisomorphism lifts to a quasiisomorphism between $a$ and $c'$. A consequence of this definition is that all dg categories are fibrant with respect to the DK model structure on $\mathrm{dgcat}_k$ (see \cite[Remark 2.14]{tab-drinfeld}).

\subsection{Pretriangulated dg categories}

Bondal and Kapranov have introduced pretriangulated dg categories in order to amend some of the deficiencies of triangulated categories.
\begin{defn}
A dg category $\mathscr{C}$ is \emph{pretriangulated} if for every $\mathsf{Y}^a \in \mathrm{mod}_\mathscr{C}$, the module $\mathsf{Y}^a[1]$ is quasirepresentable, and if $f : a \rightarrow b$ is closed of degree 0, then
\[
\cone( \mathsf{Y}^a \xrightarrow{\mathsf{Y}^f} \mathsf{Y}^b)
\]
is quasirepresentable. In other words, the homotopy category of $\mathscr{C}$ is triangulated and this triangulated structure is consistent with that of $\mathrm{mod}_\mathscr{C}$.
\end{defn}
Every dg category can be minimally embedded into a pretriangulated dg category. This construction is described by Bondal and Kapranov.
\begin{defn}
Given a dg category $\mathscr{A}$, \emph{ the category of twisted complexes over $\mathscr{A}$}, denoted $\mathrm{Tw}\mathscr{A}$, is the category whose objects are given pairs of $(\bigoplus_{i=1}^n a_{i}[d_i],m)$ where $\bigoplus_{i=1}^na_i[d_i]$ is a formal direct sum of formally shifted objects in $\mathscr{A}$ and $d_i$ are integers. The second piece of data $m$ is a strictly upper triangular matrix so that $m_{i,j} \in \hom_{\mathscr{A}}(a_i,a_j)[d_j-d_i]$, is homogeneous of degree 1 and vanishes if $i \geq j$. These objects must satisfy the Maurer-Cartan relation, $dm + m^2 = 0$. For a pair of objects $\alpha = (\bigoplus_{i=1}^k a_i[d_i],m)$ and $\beta = (\bigoplus_{i=1}^\ell b_i[e_i], n)$ we define $\hom_{\mathrm{Tw} \mathscr{A}}(\alpha,\beta)$ to be the space of matrices $g$ with entries $g_{i,j} \in \hom_{\mathscr{A}}(a_j,b_i)[e_i - d_j]$ and composition given by matrix multiplication. If $f$ is an element of $\hom_{\mathrm{Tw}(\mathscr{A})}^j(\alpha,\beta)$, then we define the differential acting on $f$ as follows,
$$
d_{\mathrm{Tw}(\mathscr{B})}f = d_\mathscr{A} f  + mf + (-1)^{j} fn
$$
where $d_\mathscr{A} f$ means we act element-wise on $f$ by the differential in $\mathscr{A}$. There is a full and faithful functor from $\mathscr{A}$ to $\mathrm{Tw}\mathscr{A}$ which sends an object $a \in \mathscr{A}$ to the object $(a,0)$ of $\mathrm{Tw}\mathscr{A}$. Pretriangulated categories are precisely those for which the embedding of $\mathscr{A}$ into $\mathrm{Tw}\mathscr{A}$ is a quasi-equivalence. One can give explicit representatives of cones in $\Tw\mathscr{A}$.

\end{defn}
If $F:\mathscr{A} \rightarrow \mathscr{B}$ is a dg functor, then there is an induced dg functor from $\Tw \mathscr{A}$ to $\Tw \mathscr{B}$ which we will denote by $F^{\Tw}$.

%\begin{remark}
%Seidel \cite[2b]{seidelquart} gives an analogous definition in the case where $\mathscr{C}$ is an $A_\infty$ category. 
%\end{remark}
\begin{remark}
One may also construct the pretriangulated envelope of a dg category $\mathscr{C}$ by taking the category of all semi free dg modules over $\mathscr{C}$. Details may be found in \cite{al} or\cite{orl-glue}.

\end{remark}
\begin{ex}
If we let $\mathbf{k}$ be the category with one object $e$ so that $\hom_\mathbf{k}(e,e) = k \cdot \mathrm{id}_e$, then $\Tw \mathbf{k}$ is equivalent to the subcategory of $\mathrm{Ch}_k$ whose objects are bounded complexes of $k$-vector spaces. We will denote this category $\Perf_k$.
\end{ex}
A category $\mathscr{A}$ is said to be \emph{strongly pretriangulated} if it is dg equivalent to $\mathrm{Tw}\mathscr{A}$.

\subsection{Categories of coherent sheaves}\label{sect:cohsheaves}

Here we make several remarks on dg extensions of triangulated categories of coherent sheaves. We will denote by $\mathrm{coh}(X)$ the abelian category of coherent sheaves on $X$, and by $\mathrm{qcoh}(X)$ the category of quasicoherent sheaves on $X$. To each of these, we have triangulated derived categories $\DD(\mathrm{coh}(X))$ and $\DD(\mathrm{qcoh}(X))$. Our goal is to describe dg extensions of such categories. Good overviews can be found in \cite[Section 3.1]{orl-glue} and \cite[Section 3]{kl}. Our notation is adapted from \cite{orl-glue}.

There is a dg category $\mathscr{K}(\mathrm{qcoh}(X))$ of unbounded complexes of quasicoherent sheaves on $X$. A complex $\mathscr{S}^\bullet$ is called {\it h-flat} (or homotopy-flat) if the total complex of $\mathscr{S}^\bullet \otimes_{\mathscr{O}_X} \mathscr{C}^\bullet$ is acyclic for any acyclic complex $\mathscr{C}^\bullet$. We let $\mathscr{F}\mathrm{lat}(X)$ be the full subcategory of $\mathscr{K}(X)$ made up of h-flat complexes, and we let $\mathscr{A}\mathrm{c}_f(X)$ be the full subcategory of acyclic h-flat complexes, then the quotient dg category $\mathscr{F}\mathrm{lat}(X)/\mathscr{A}\mathrm{c}_f(X)$ is a dg enhancement of $\DD(\mathrm{qcoh}(X))$, which we will call $\DD_\mathrm{dg}(\mathrm{qcoh}(X))$.

If $X$ is noetherian and separated then $\DD^b(\mathrm{coh}(X))$ is equivalent to the subcategory of $\DD^b(\mathrm{qcoh}(X))$ made up of of complexes with coherent cohomology, which we will call $\DD^b_\mathrm{coh}(\mathrm{qcoh}(X))$. Thus we can define $\DD^b_\mathrm{dg}(\mathrm{coh}(X))$ to be the full subcategory of $\DD_\mathrm{dg}(\mathrm{qcoh}(X))$ whose class in $\DD(\mathrm{qcoh}(X))$ has bounded coherent cohomology. For brevity, we will use the notation $\DD^b_\mathrm{dg}(X)$ for $\DD^b_\mathrm{dg}(\mathrm{coh}(X))$.

There are many different dg enhancements of the category of coherent sheaves, but for quasiprojective schemes, all enhancements are quasiequivalent \cite[Theorem 8.13]{lo}.

The category $\mathrm{Perf}(X)$ is a full triangulated subcategory of $\DD^b(X)$, so we may define $\Perf(X)$ to be the full subcategory of objects in $\DD^b_\mathrm{dg}(X)$ whose corresponding objects are in $\mathrm{Perf}(X)$. Given a morphism $f: X \rightarrow Y$ of schemes, there is a functor $\mathbb{L}f^*: \DD^b_\mathrm{dg}(Y) \rightarrow \DD^b_\mathrm{dg}(X)$. There is a right adjoint (quasi) functor $\mathbb{R}f_* : \DD^b_\mathrm{dg}(X) \rightarrow \DD^b_\mathrm{dg}(Y)$. The functor $\mathbb{L}f^*$ extends to a dg functor from $\Perf(Y)$ to $\Perf(X)$. Tensoring with any $\mathscr{F}$ in $\mathscr{K}(\mathrm{qcoh}(X))$ also defines a dg functor from $\DD^b_{\dg}(X)$ to $\DD^b_{\dg}(X)$ denoted $(-)\otimes^\mathbb{L}\mathscr{F}$ which extends to a functor from $\Perf(X)$ to $\Perf(X)$. If $f$ is proper and has finite Tor dimension, then $\mathbb{R}f_*$ extends to a quasifunctor from $\Perf(X)$ to $\Perf(Y)$.

We will use the notation $\mathbb{R},\mathbb{L}$ to denote derived quasifunctors between dg enhancements and $\mathbf{R},\mathbf{L}$ derived functors between their triangulated homotopy categories.

\subsection{Various gluing constructions}

There are several related categorical constructions that we would like to discuss in this section. The first construction is due to Tabuada \cite{tabthesis} and is expanded upon by Orlov \cite{orl-glue}. We construct this category from the data of two dg categories $\mathscr{A}_1,\mathscr{A}_2$ and a $\mathscr{A}_1$-$\mathscr{A}_2$ dg bimodule, which we will denote $\phi$. We will denote the corresponding category as $\mathscr{A}_1 \sqcup_\phi \mathscr{A}_2$. We have
$$
\mathrm{Ob}(\mathscr{A}_1 \sqcup_\phi \mathscr{A}_2)  = \mathrm{Ob}(\mathscr{A}_1) \coprod \mathrm{Ob}(\mathscr{A}_2)
$$
and
$$
\hom_{\mathscr{A}_1 \sqcup_\phi \mathscr{A}_2}(a,b) = \begin{cases}\hom_{\mathscr{A}_1}(a,b) & \mbox{ if } a,b \in \mathrm{Ob}(\mathscr{A}_1)\\
 \hom_{\mathscr{A}_2}(a,b) & \mbox{ if } a,b \in \mathrm{Ob}(\mathscr{A}_2) \\
\phi(b,a) & \mbox{ if } a \in \mathrm{Ob}(\mathscr{A}_1), b \in \mathrm{Ob}(\mathscr{A}_2) \\
0 & \mbox{ otherwise}
 \end{cases}
$$
Composition is given by using the bimodule structure on $\phi$. Usually, we will be interested just in the bimodule defined as $(b,a) \mapsto \hom_{\mathscr{C}}(F_1(a),F_2(b))$ for functors $F_i : \mathscr{A}_i \rightarrow \mathscr{C}$. In this case we will write $\mathscr{A}_1 \sqcup_{F_1,F_2} \mathscr{A}_2$.

We will consider the pretriangulated envelope of $\mathscr{A}_1 \sqcup_\phi \mathscr{A}_2$. Kuznetsov and Lunts \cite{kl} give a concrete description of this category. Let $\mathscr{A}_1 \times_\phi \mathscr{A}_2$ be the dg category whose objects are $(a_1,a_2,\mu)$ where $a_i \in \mathscr{A}_i$ and $\mu \in \mathrm{Z}^0\phi(a_1,a_2)$ is a closed degree 0 element. Details of their construction can be found in \cite[Section 4]{kl}.

In the special case where there are two dg functors $F:\mathscr{A}_1 \rightarrow \mathscr{C}$ and $G: \mathscr{A}_2 \rightarrow \mathscr{C}$ and $\phi = \mathsf{S}_{F,G}$, we will give a precise definition.

\begin{defn}\label{def:bbb}
If $\mathscr{A}_1,\mathscr{A}_2$ and $\mathscr{C}$ are dg categories with $F:\mathscr{A}_1 \rightarrow \mathscr{C}$ and $G : \mathscr{A}_2 \rightarrow \mathscr{C}$ dg functors, then $\mathscr{A}_1 \times_{F,G} \mathscr{A}_2$ is the category with objects given by triples $(a,b,\mu)$ where $a \in \mathrm{Ob}(\mathscr{A}_1), b \in \mathrm{Ob}(\mathscr{A}_2)$ and $\mu$ is a closed degree 0 element of $\hom_\mathscr{C}(F(a),G(b))$. Homomorphisms between $(a,b,\mu)$ and $(c,d,\chi)$ are given by the mapping cone in $\mathrm{Ch}_k$ of the map
\[
\hom_{\mathscr{A}_1}(a,c) \oplus \hom_{\mathscr{A}_2}(b,d) \xrightarrow{\xi_* - \mu^*} \hom_{\mathscr{C}}(F_1(a),F_2(b)).
\]
Here $\mu^*$ means we compose the map $\hom_{\mathscr{A}_1}(b,d) \rightarrow \hom_\mathscr{C}(F_1(b),F_2(d))$ with $F_1(\mu)^*$, and we define $\xi_*$ similarly. Precisely, $\hom^i_{\mathscr{A}_1\times_{F_1,F_2}\mathscr{A}_2}((a,b,\mu),(c,d,\xi))$ is made up of triples
\[
(f,g,h) \in \hom^i_{\mathscr{A}_1}(a,b) \oplus \hom^i_{\mathscr{A}_2}(c,d) \oplus \hom^{i-1}_\mathscr{C}(F_1(a),F_2(d))
\]
with differential,
\[
d(f,g,h) = (df,dg,dh + (F_2(g)\cdot \mu - \xi\cdot F_1(f)))
\]
and
\[
(f',g',h')\cdot(f,g,h) = (f'\cdot f, g'\cdot g, h'\cdot f + (-1)^{\deg g'}g'\cdot h).
\]

\end{defn}
There is always a functor which we denote $c_{F_1,F_2} : \mathscr{A}_1 \times_{F_1,F_2} \mathscr{A}_2 \rightarrow \Tw \mathscr{C}$ sending $(a,b,\mu)$ to $\cone(\mu)$. We now list several properties of this construction.

\begin{proposition}[{Kuznetsov-Lunts \cite[Proposition 4.3]{kl}}]\label{ref:klprop1}
If $\mathscr{A}_1$ and $\mathscr{A}_2$ are pretriangulated, then $\mathscr{A}_1 \times_\phi \mathscr{A}_2$ is also pretriangulated.

\end{proposition}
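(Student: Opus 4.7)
The plan is to verify the two defining properties of a pretriangulated dg category directly: for every $\alpha \in \mathscr{A}_1\times_\phi\mathscr{A}_2$ the shifted Yoneda module $\mathsf{Y}^\alpha[1]$ is quasi representable, and for every closed degree zero morphism $\varphi$ the module $\cone(\mathsf{Y}^\varphi)$ is quasi representable. I will work in the setting of Definition \ref{def:bbb}; the general bimodule case is a purely formal extension.

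For the shift of $\alpha = (a,b,\mu)$, pretriangulatedness of $\mathscr{A}_1$ and $\mathscr{A}_2$ yields representing objects $a[1]\in\mathscr{A}_1$ and $b[1]\in\mathscr{A}_2$. The functoriality of $\phi$ together with the canonical quasi isomorphisms $\mathsf{Y}^{a[1]}\simeq\mathsf{Y}^a[1]$ and $\mathsf{Y}^{b[1]}\simeq\mathsf{Y}^b[1]$ transport $\mu$ to a closed degree zero element $\mu[1]\in\hom_\mathscr{C}(F_1(a[1]),F_2(b[1]))$. Unwinding the hom formula of Definition \ref{def:bbb} then shows that $(a[1],b[1],\mu[1])$ represents $\mathsf{Y}^\alpha[1]$.

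For cones, let $(f,g,h):(a,b,\mu)\to(c,d,\xi)$ be a closed degree zero morphism, so that $df=0$, $dg=0$, and $dh = \xi\cdot F_1(f)-F_2(g)\cdot\mu$. By pretriangulatedness, choose $C_1\in\mathscr{A}_1$ and $C_2\in\mathscr{A}_2$ representing $\cone(f)$ and $\cone(g)$, with the twisted-complex presentations $C_1\simeq(a[1]\oplus c,T_f)$ and $C_2\simeq(b[1]\oplus d,T_g)$. I would define a morphism $\nu:F_1(C_1)\to F_2(C_2)$ by the block matrix
\[
\nu=\begin{pmatrix}\mu[1] & 0 \\ h & \xi\end{pmatrix},
\]
and observe that applying the twisted-complex differential to $\nu$ produces exactly $\xi\cdot F_1(f)-F_2(g)\cdot\mu-dh$ in the off-diagonal entry, which vanishes by closedness of $(f,g,h)$; the remaining entries are closed by construction. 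Hence $(C_1,C_2,\nu)$ is a legitimate object of $\mathscr{A}_1\times_\phi\mathscr{A}_2$ and is the natural candidate for the cone.

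The main obstacle is showing that this candidate actually represents $\cone(\mathsf{Y}^{(f,g,h)})$. A conceptual route is to embed $\mathscr{A}_1\times_\phi\mathscr{A}_2$ fully faithfully into $\mathrm{Tw}(\mathscr{A}_1\sqcup_\phi\mathscr{A}_2)$ by sending $(a,b,\mu)$ to the twisted complex on the two objects $a,b$ with single off-diagonal Maurer-Cartan entry $\mu$. The target is pretriangulated by construction, so it suffices to check that the candidate shifts and cones above map, under this embedding, to objects quasi equivalent to the intrinsic shifts and cones in $\mathrm{Tw}(\mathscr{A}_1\sqcup_\phi\mathscr{A}_2)$. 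This reduces componentwise to pretriangulatedness of $\mathscr{A}_1$ and $\mathscr{A}_2$, used to replace twisted complexes built entirely inside a single factor by their representing objects. The only genuinely delicate step is the sign and degree bookkeeping needed to combine the two block twists $T_f,T_g$ together with the off-diagonal gluing datum into a single consistent Maurer-Cartan element in the total twisted complex.
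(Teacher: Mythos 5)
This proposition is cited from Kuznetsov--Lunts rather than proved in the paper, but the paper's proof of Proposition~\ref{prop:istriang} (the analogue for the homotopy equalizer) explicitly follows the same Kuznetsov--Lunts argument, which makes a comparison possible. Your direct construction is exactly that argument: shifts are taken componentwise, cones are taken componentwise in $\mathscr{A}_1$ and $\mathscr{A}_2$, and the connecting morphism $\nu : F_1(C_1) \rightarrow F_2(C_2)$ is assembled as a lower-triangular block matrix out of $\mu$, $\xi$, and the homotopy $h$. Indeed the morphism $\gamma = F_2(i)\,\mu\,F_1(p) + F_2(j)\,\xi\,F_1(s) + F_2(j)\,f'\,F_1(p)$ in the proof of Proposition~\ref{prop:istriang} is precisely your block matrix $\nu$ written out in terms of the structural inclusion and projection maps $i,p,j,s$ of the cone, and the verification that $(C_1,C_2,\nu)$ represents $\Cone(\mathsf{Y}^{(f,g,h)})$ is carried out there by exhibiting a homotopy-commutative ladder of distinguished triangles in which every vertical arrow save the one through $\gamma$ is an isomorphism.

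Your second, ``conceptual'' route --- embedding $\mathscr{A}_1\times_\phi\mathscr{A}_2$ as a full dg subcategory of $\Tw(\mathscr{A}_1\sqcup_\phi\mathscr{A}_2)$ and showing that the intrinsic cone there is quasi-isomorphic to an object of the image --- is genuinely different from Kuznetsov--Lunts and is also valid. It buys you automatic closedness of the Maurer--Cartan datum and the universal property of the cone, both of which hold for free in the pretriangulated envelope, at the cost of the collapse argument you correctly flag as delicate: one must replace the sub-twisted-complexes built purely from $\mathscr{A}_1$- or $\mathscr{A}_2$-objects by single representatives (using pretriangulatedness of $\mathscr{A}_1,\mathscr{A}_2$) while transporting the off-diagonal gluing entry. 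That collapse is legitimate, because the semiorthogonality $\hom_{\mathscr{A}_1\sqcup_\phi\mathscr{A}_2}(b,a)=0$ for $a\in\mathscr{A}_1$, $b\in\mathscr{A}_2$ forces every twisted complex to filter as an $\mathscr{A}_1$-column extended by an $\mathscr{A}_2$-column, but when written in full it essentially reproduces your explicit block-matrix computation. Note also that if you take this second route you do not need to construct $\nu$ in advance at all --- it emerges from the reduction --- so the two halves of your argument are redundant and you should commit to one.
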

\begin{proposition}[{Kuznetsov-Lunts \cite[Proposition 4.9]{kl}}]\label{ref:klprop2}
If there are quasiisomorphisms of functors $n_1: F_1' \rightarrow F_1$ and $n_2:F_2' \rightarrow F_2$ then there is a quasiequivalence,
\[
\gamma_{n_1,n_2} : \mathscr{A}_1 \times_{F_1',F_2'} \mathscr{A}_2 \rightarrow \mathscr{A}_1 \times_{F_1,F_2} \mathscr{A}_2
\]
so that $c_{F_1',F_2'}$ is quasiequivalent to $c_{F_1,F_2}\cdot \gamma_{n_1,n_2}$ as a bimodule.
\end{proposition}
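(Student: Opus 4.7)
The plan is to realize $\gamma_{n_1,n_2}$ through a zig-zag of strict dg functors between an intermediate dg category that records compatible gluing data for both pairs $(F_1',F_2')$ and $(F_1,F_2)$ together with a homotopy relating them. The whole statement is a ``propagation of structure'' result: if we can package $\mu'$, $\mu$, and a coherence datum into a single object, then the quasi-isomorphism hypothesis on $n_1,n_2$ forces both projections to the two ends to be quasi-equivalences.

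First I would define an auxiliary dg category $\mathscr{M}$ whose objects are tuples $(a,b,\mu',\mu,\eta)$, with $\mu' \in \hom^0_\mathscr{C}(F_1'(a),F_2'(b))$ and $\mu \in \hom^0_\mathscr{C}(F_1(a),F_2(b))$ closed and $\eta \in \hom^{-1}_\mathscr{C}(F_1'(a),F_2(b))$ satisfying $d\eta = n_2(b)\cdot\mu' - \mu\cdot n_1(a)$. Morphisms are modelled on Definition \ref{def:bbb} so that the extra coherence datum is incorporated as additional components of the hom complex with the evident differential built out of $n_1, n_2$ and $\eta$. This yields two strict forgetful dg functors
\[
p': \mathscr{M} \longrightarrow \mathscr{A}_1 \times_{F_1',F_2'} \mathscr{A}_2, \qquad p: \mathscr{M} \longrightarrow \mathscr{A}_1 \times_{F_1,F_2} \mathscr{A}_2.
\]

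Next I would show that $p'$ and $p$ are quasi-equivalences. On hom complexes this reduces to checking that, given fixed endpoints, the subcomplex encoding the ``extra data'' $(\mu,\eta)$ over a prescribed $(a,b,\mu')$ (and symmetrically $(\mu',\eta)$ over a prescribed $(a,b,\mu)$) is acyclic; this follows from the fact that $n_1(a)$ and $n_2(b)$ induce quasi-isomorphisms on the relevant mapping cones in $\mathscr{C}$, and a standard mapping-cylinder contraction can be written down explicitly. For essential surjectivity of $p'$, the quasi-isomorphism of bimodules $\mathsf{S}_{F_1,F_2} \to \mathsf{S}_{F_1',F_2'}$ obtained by $\mu \mapsto n_2(b)\cdot\mu\cdot n_1(a)$ lets us lift any closed $\mu'$ to a closed $\mu$ together with a homotopy $\eta$; the surjectivity of $p$ is symmetric. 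Then $\gamma_{n_1,n_2}$ is obtained as $p \circ (p')^{-1}$ in the Dwyer-Kan homotopy category of $\mathrm{dgcat}_k$, or as a genuine dg functor by replacing $\mathscr{A}_1 \times_{F_1',F_2'} \mathscr{A}_2$ by a cofibrant resolution and lifting along the acyclic fibration $p'$.

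The main obstacle is the cone-compatibility clause, i.e.\ producing a bimodule quasi-isomorphism between $\mathsf{S}_{c_{F_1',F_2'},\,c_{F_1,F_2}\cdot\gamma_{n_1,n_2}}$ and the diagonal. The key observation is that an object $(a,b,\mu',\mu,\eta)$ of $\mathscr{M}$ carries a tautological morphism of twisted complexes in $\Tw\mathscr{C}$ from $\cone(\mu')$ to $\cone(\mu)$, whose matrix entries are precisely $n_1(a)$, $n_2(b)$, and $\eta$, and which is closed by the defining relation $d\eta = n_2(b)\cdot\mu' - \mu\cdot n_1(a)$. Because $n_1(a)$ and $n_2(b)$ are quasi-isomorphisms, so is this morphism of cones, and its construction is strictly natural in morphisms of $\mathscr{M}$. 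Pulling back along $p$ and $p'$ then yields the desired bimodule quasi-isomorphism. The last step can be streamlined by invoking Tabuada's invariance of the gluing $\sqcup_{\phi}$ under quasi-isomorphism of the bimodule $\phi$, together with Proposition \ref{ref:klprop1} to remain inside pretriangulated categories throughout.
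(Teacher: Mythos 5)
Your strategy — realize $\gamma_{n_1,n_2}$ as a zig-zag through an auxiliary dg category whose objects package compatible gluing data for both pairs of functors — is sound and would work, but it is heavier machinery than the argument the paper is citing. Because the natural transformations run $n_1\colon F_1'\to F_1$ and $n_2\colon F_2'\to F_2$, they do \emph{not} induce a strict bimodule map between $\mathsf{S}_{F_1',F_2'}(a,b)=\hom_\mathscr{C}(F_1'(a),F_2'(b))$ and $\mathsf{S}_{F_1,F_2}(a,b)=\hom_\mathscr{C}(F_1(a),F_2(b))$ in either direction; what they do induce is a roof
\[
\mathsf{S}_{F_1',F_2'} \xrightarrow{\ n_2(b)\circ(-)\ } \mathsf{S}_{F_1',F_2} \xleftarrow{\ (-)\circ n_1(a)\ } \mathsf{S}_{F_1,F_2},
\]
where $\mathsf{S}_{F_1',F_2}(a,b)=\hom_\mathscr{C}(F_1'(a),F_2(b))$, and both arms are termwise quasi-isomorphisms precisely because $n_1(a)$ and $n_2(b)$ are. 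Applying Kuznetsov--Lunts's Proposition 4.9 (a single \emph{strict} bimodule quasi-isomorphism gives a strict dg functor between the gluings which is a quasi-equivalence) to each arm yields the zig-zag
\[
\mathscr{A}_1\times_{F_1',F_2'}\mathscr{A}_2 \ \longrightarrow\ \mathscr{A}_1\times_{F_1',F_2}\mathscr{A}_2 \ \longleftarrow\ \mathscr{A}_1\times_{F_1,F_2}\mathscr{A}_2
\]
of quasi-equivalences, and the cone-compatibility clause follows by tracking, along each arm, the evident naturality of the functor $(a,b,\mu)\mapsto\Cone(\mu)$ under a strict bimodule quasi-isomorphism. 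Your $\mathscr{M}$ is essentially a mapping-cylinder expansion of this roof: it is not wrong, but it re-derives what the intermediate gluing already provides, and you pay for it by having to define and control the differential on a larger hom complex.

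One genuine misstep to fix: the claimed ``quasi-isomorphism of bimodules $\mathsf{S}_{F_1,F_2}\to\mathsf{S}_{F_1',F_2'}$ given by $\mu\mapsto n_2(b)\cdot\mu\cdot n_1(a)$'' does not typecheck. With $\mu\colon F_1(a)\to F_2(b)$, the composite $\mu\cdot n_1(a)$ lands in $\hom(F_1'(a),F_2(b))$, and one cannot further post-compose with $n_2(b)\colon F_2'(b)\to F_2(b)$. There is no strict map in either direction between $\mathsf{S}_{F_1',F_2'}$ and $\mathsf{S}_{F_1,F_2}$ under the given hypotheses; only the two maps into the mixed bimodule $\mathsf{S}_{F_1',F_2}$ exist. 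Your essential-surjectivity argument should therefore be phrased in terms of that roof (lift $n_2(b)\cdot\mu'$ along the quasi-isomorphism $(-)\circ n_1(a)$ up to a homotopy $\eta$), which is exactly what your condition $d\eta = n_2(b)\cdot\mu' - \mu\cdot n_1(a)$ is secretly encoding — so the rest of your construction goes through once this formula is corrected. Your observation that the closed matrix $\left(\begin{smallmatrix} n_1(a) & 0 \\ \eta & n_2(b)\end{smallmatrix}\right)$ gives a natural quasi-isomorphism $\Cone(\mu')\to\Cone(\mu)$ in $\Tw\mathscr{C}$ is correct and is the right ingredient for the bimodule statement about $c_{F_1',F_2'}$ versus $c_{F_1,F_2}\cdot\gamma_{n_1,n_2}$.
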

In Proposition \ref{ref:klprop2}, we say that two functors are quasiisomorphic if the corresponding bimodules are, or equivalently, if there is a natural transformation between them which induces isomorphisms in the homotopy category.
\begin{proposition}[{Kuznetsov-Lunts \cite[Proposition 4.14]{kl}}]\label{ref:klprop3}
If $G: \mathscr{A} \rightarrow \mathscr{A}_1$ and $H:\mathscr{B} \rightarrow \mathscr{A}_2$ are quasiequivalences, then there is a quasiequivalence
\[
\eta_{G,H}:\mathscr{A} \times_{G\cdot F_1,H \cdot F_2} \mathscr{B} \rightarrow \mathscr{A}_1 \times_{F_1,F_2} \mathscr{A}_2.
\]
Furthermore, $c_{G\cdot F_1,H \cdot F_2}$ and $c_{F_1,F_2} \cdot \eta_{G,H}$ are quasiequivalent as bimodules.
\end{proposition}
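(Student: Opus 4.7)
The plan is to construct $\eta_{G,H}$ in the most naive way possible on objects and morphisms and then verify the two defining properties of a quasi equivalence (quasi full faithfulness and quasi essential surjectivity) using the fact that $G$ and $H$ are themselves quasi equivalences.

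On objects, I would set $\eta_{G,H}(a,b,\mu) = (G(a),H(b),\mu)$; this is well defined because $\mu \in \hom_\mathscr{C}(GF_1(a),HF_2(b)) = \hom_\mathscr{C}(F_1(G(a)),F_2(H(b)))$, and it is closed of degree $0$ on both sides by definition. On morphism complexes, following Definition \ref{def:bbb}, I would send the triple $(f,g,h)$ to $(G(f),H(g),h)$. The compatibility with the differential (involving the term $F_2(g)\mu - \xi F_1(f)$) is immediate because $F_1G(f)=F_1(G(f))$ and $F_2H(g)=F_2(H(g))$, and composition is preserved by the same observation.

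Quasi full faithfulness is the more routine step. The morphism complex between $(a,b,\mu)$ and $(c,d,\xi)$ is the cone (shifted) of a map of complexes whose first two components are
\[
\hom_\mathscr{A}(a,c)\to\hom_{\mathscr{A}_1}(G(a),G(c)), \qquad \hom_\mathscr{B}(b,d)\to\hom_{\mathscr{A}_2}(H(b),H(d)),
\]
both of which are quasi isomorphisms since $G$ and $H$ are quasi equivalences, while the third component $\hom_\mathscr{C}^{\bullet-1}(F_1G(a),F_2H(d))$ is literally the identity. A short diagram chase with the long exact sequences of the two mapping cones (or equivalently the five lemma in cohomology) shows that $\eta_{G,H}$ induces a quasi isomorphism on each $\hom$-complex.

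The main obstacle, and the only delicate point, is quasi essential surjectivity, because objects of $\mathscr{A}_1\times_{F_1,F_2}\mathscr{A}_2$ carry the extra datum of a \emph{strictly} closed degree zero morphism $\mu$, and we only have inverses of $G$ and $H$ up to homotopy. Given $(a_1,a_2,\mu)$, I would first choose $a\in\mathscr{A}$, $b\in\mathscr{B}$ and closed degree zero maps $\alpha: G(a)\to a_1$, $\beta: H(b)\to a_2$ that are invertible in $H^0$. Choose $\beta'$ closed of degree zero with $\beta\beta' - \mathrm{id}_{a_2} = ds$ for some $s\in\hom^{-1}_{\mathscr{A}_2}(a_2,a_2)$, and set
\[
\mu' = F_2(\beta')\cdot\mu\cdot F_1(\alpha) \in \hom^0_\mathscr{C}(GF_1(a),HF_2(b)),
\]
which is closed since $\alpha,\mu,\beta'$ are. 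Then $F_2(\beta)\mu' - \mu F_1(\alpha) = F_2(\beta\beta'-\mathrm{id})\mu F_1(\alpha) = d\bigl(F_2(s)\mu F_1(\alpha)\bigr)$, so with $h := -F_2(s)\mu F_1(\alpha)$ the triple $(\alpha,\beta,h)$ is a closed degree zero morphism in $\mathscr{A}_1\times_{F_1,F_2}\mathscr{A}_2$ from $(G(a),H(b),\mu') = \eta_{G,H}(a,b,\mu')$ to $(a_1,a_2,\mu)$. That $(\alpha,\beta,h)$ is an isomorphism in $H^0$ then follows from quasi full faithfulness and the long exact sequence, since its underlying $\mathscr{A}_1$ and $\mathscr{A}_2$ components are already isomorphisms in $H^0$.

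Finally, for the bimodule statement, the composition $c_{F_1,F_2}\cdot\eta_{G,H}$ sends $(a,b,\mu) \mapsto (G(a),H(b),\mu) \mapsto \cone(\mu)$, and $c_{GF_1,HF_2}$ sends $(a,b,\mu)\mapsto \cone(\mu)$ with the identical twisted differential in $\Tw\mathscr{C}$. On morphism spaces both quasi functors act by the same formula on the $h$-component and by $G,H$ postcomposed with $F_1,F_2$ on the first two components, yielding the same bimodule structure up to the canonical identification. Hence the two quasi functors are quasi isomorphic as bimodules, as required.
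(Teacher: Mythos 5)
Your proof is correct. Note that the paper itself offers no argument here: the first statement is cited from Kuznetsov--Lunts \cite[Proposition 4.14]{kl}, and the paper merely remarks that the bimodule statement ``is deduced from'' their proof. What you have written is the expected direct verification: define $\eta_{G,H}$ in the obvious way on objects and morphisms, deduce quasi full faithfulness from the five lemma applied to the cone description of the $\hom$-complexes, and rectify the gluing morphism $\mu$ through a chosen quasi-inverse of $\beta$ to obtain essential surjectivity. Two small remarks. First, for the essential-surjectivity step you could invoke the paper's Lemma \ref{lemma:isomrec} directly once you have produced the closed degree-$0$ morphism $(\alpha,\beta,h)$ whose $\mathscr{A}_1$- and $\mathscr{A}_2$-components are $\HH^0$-isomorphisms; re-deriving the isomorphism criterion from quasi full faithfulness and the long exact sequence is fine but duplicates that lemma. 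Second, your bimodule argument is slightly too modest: $c_{F_1,F_2}\cdot\eta_{G,H}$ and $c_{G\cdot F_1,H\cdot F_2}$ agree \emph{on the nose}, both on objects (each sends $(a,b,\mu)$ to $\cone(\mu)$ in $\Tw\mathscr{C}$ built from $F_1G(a)$ and $F_2H(b)$) and on morphisms (each sends $(f,g,h)$ to the upper-triangular matrix with entries $F_1G(f)$, $h$, $F_2H(g)$), so the two functors are literally equal and the bimodule claim is immediate.
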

The second statements in Propositions \ref{ref:klprop2} and \ref{ref:klprop3} are deduced from the proofs of \cite[Proposition 4.9, Proposition 4.14]{kl}
\begin{remark}
According to \cite[Corollary 4.5]{kl}, if $\mathscr{A}_1$ and $\mathscr{A}_2$ are pretriangulated, then $\HH^0(\mathscr{A}_1 \times_\phi \mathscr{A}_2)$  admits a semiorthogonal decomposition $\langle \HH^0 \mathscr{A}_1, \HH^0 \mathscr{A}_2 \rangle$. See Section \ref{sect:sod} for details on semiorthogonal decompositions.
\end{remark}
\begin{remark}
If $\mathscr{A}_1$ and $\mathscr{A}_2$ are pretriangulated dg categories, then $\mathscr{A}_1 \times_{\phi} \mathscr{A}_2$ is quasiequivalent to $\Tw (\mathscr{A}_1 \sqcup_\phi \mathscr{A}_2)$.
\end{remark}

There are two closely related constructions coming from Tabuada's construction \cite{tabthesis} of the path object in $\mathrm{dgcat}_k$. Our description of the homotopy equalizer can be found in \cite{sibilla} and our construction of the homotopy fiber product can be found in \cite{bbb}.

\begin{defn}
If we have two functors $F_1,F_2:\mathscr{A} \rightarrow \mathscr{C}$, the \emph{homotopy equalizer} $\mathscr{E}\mathrm{q}^h(F_1,F_2)$ is the category whose objects are pairs $(a,\mu)$ where $a \in \mathrm{Ob}(\mathscr{A})$ and $\mu \in \mathrm{Z}^0\hom_\mathscr{C}(F_1(a),F_2(a))$ so that $[\mu]$ is an isomorphism. The complexes $\hom^i_{\mathscr{E}\mathrm{q}^h(F_1,F_2)}((a,\mu), (b,\xi))$ are given by pairs
\[
(f,h) \in \hom^i_\mathscr{A}(a,b) \oplus \hom_\mathscr{C}^{i-1}(F_1(a),F_2(b))
\]
equipped with the differential $d(f,h) = (df, dh + (\xi\cdot F_1(f) - F_2(f) \cdot \mu))$, and so that composition computed as
\[
(f,h)\cdot (f',h') = (f\cdot f', F_2(f')\cdot h + (-1)^{\deg f'} h'\cdot F_1(f)).
\]
\end{defn}
\begin{defn}
The \emph{homotopy fiber product} of a pair of functors $F_1 :\mathscr{A}_1\rightarrow \mathscr{C}$ and $F_2 : \mathscr{A}_2 \rightarrow \mathscr{C}$, denoted $\mathscr{A}_1\times^h_\mathscr{C}\mathscr{A}_2$ is the homotopy equalizer of the functors $F_1 \times 0 :\mathscr{A}_1 \times \mathscr{A}_2\rightarrow \mathscr{C}$ and $0 \times F_2 : \mathscr{A}_1 \times \mathscr{A}_2 \rightarrow \mathscr{C}$.
\end{defn}

\begin{remark}\label{rmk:ordinaryfiber}
We will also use the ordinary fiber product of categories. If $F_i : \mathscr{A}_1\rightarrow \mathscr{C}$ are functors, then $\mathscr{A}_1 \times_{\mathscr{C}} \mathscr{A}_2$ is the full subcategory of $\mathscr{A}_1 \times^h_{\mathscr{C}}\mathscr{A}_2$ made up of objects $(a,b,\mu)$ where $\mu$ is an isomorphism in $\mathscr{C}$ between $F_1(a)$ and $F_2(b)$. If $F_1 : \mathscr{A}_1 \rightarrow \mathscr{C}$ is a cofibration, then $\mathscr{A}_1 \times^h_\mathscr{C} \mathscr{A}_2$ is quasiisomorphic to $\mathscr{A}_1 \times_\mathscr{C} \mathscr{A}_2$.

\end{remark}

%\begin{remark}
%Note that this disagrees with Kuznetsov and Lunts' definition slightly (the position of the $(-1)^d$ factor has moved from the definition of the composition of homomorphisms to the definition of the differential). All of their results continue to hold.
%\end{remark}

It is important to note that the homotopy fiber product of the diagram
\[
\mathscr{A}_1 \xrightarrow{F_1} \mathscr{C} \xleftarrow{F_2} \mathscr{A}_2
\]
is a full subcategory of $\mathscr{A}_1 \times_{F_1,F_2}\mathscr{A}_2$ made up of objects $(a,b,\mu)$ so that $[\mu]$ is an isomorphism in $\HH^0\mathscr{C}$, or equivalently, $\cone(\mu)$ is homotopically trivial. Therefore, Propositions  \ref{ref:klprop2} and \ref{ref:klprop3} have analogues for homotopy fiber products, as one would expect.
\begin{proposition}
If $G: \mathscr{A}_1' \rightarrow \mathscr{A}_1$ and $H:\mathscr{A}_2' \rightarrow \mathscr{A}_2$ are quasiequivalences, then there is quasiequivalence
\[
\eta_{G,H} : \mathscr{A}_1' \times_{\mathscr{C}}^h \mathscr{A}_2' \rightarrow \mathscr{A}_1 \times_{\mathscr{C}}^h \mathscr{A}_2.
\]
\end{proposition}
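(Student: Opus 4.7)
The plan is to deduce this from Proposition \ref{ref:klprop3} by realising both homotopy fibre products as full subcategories of the corresponding Kuznetsov--Lunts gluings and showing that the quasi equivalence provided there restricts. Fix structure functors $F_1 : \mathscr{A}_1 \rightarrow \mathscr{C}$ and $F_2 : \mathscr{A}_2 \rightarrow \mathscr{C}$ defining $\mathscr{A}_1 \times_{\mathscr{C}}^h \mathscr{A}_2$, and take the induced structure functors $F_1 \circ G$ and $F_2 \circ H$ on the primed side. By the remark preceding the statement, $\mathscr{A}_1' \times_{\mathscr{C}}^h \mathscr{A}_2'$ is the full subcategory of $\mathscr{A}_1' \times_{F_1 G, F_2 H} \mathscr{A}_2'$ cut out by the condition that $\cone(\mu)$ be acyclic in $\mathscr{C}$, and similarly for the unprimed side. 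Proposition \ref{ref:klprop3} produces a quasi equivalence
\[
\eta_{G,H} : \mathscr{A}_1' \times_{F_1 G, F_2 H} \mathscr{A}_2' \longrightarrow \mathscr{A}_1 \times_{F_1,F_2} \mathscr{A}_2,
\]
so it suffices to show it restricts to the fibre product subcategories on both sides.

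For the restriction, I would first inspect the explicit construction of $\eta_{G,H}$ in Kuznetsov--Lunts: on objects it sends $(a,b,\mu)$ to $(G(a),H(b),\mu)$, where the third entry is reinterpreted through the identification $\hom_\mathscr{C}(F_1 G(a), F_2 H(b)) = \hom_\mathscr{C}(F_1(G(a)), F_2(H(b)))$. Thus the cone of the gluing morphism is preserved literally, so $\eta_{G,H}$ sends the cone-acyclic subcategory of the source into the cone-acyclic subcategory of the target. Fullness and faithfulness of the restricted functor are inherited immediately from $\eta_{G,H}$ because we are restricting to full subcategories.

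The only step that needs any argument is essential surjectivity. Given $(a,b,\mu)$ in $\mathscr{A}_1 \times_{\mathscr{C}}^h \mathscr{A}_2$, the essential surjectivity of $\eta_{G,H}$ produces some $(a',b',\mu')$ in $\mathscr{A}_1' \times_{F_1 G, F_2 H} \mathscr{A}_2'$ and an isomorphism $\eta_{G,H}(a',b',\mu') \cong (a,b,\mu)$ in $\HH^0$ of the gluing. I would then observe that the functor $c_{F_1,F_2}$ of the preamble to Proposition \ref{ref:klprop1}, which sends an object to the cone of its gluing morphism in $\Tw \mathscr{C}$, is well defined on $\HH^0$ and respects the quasi equivalence $\eta_{G,H}$ (by the compatibility clause of Proposition \ref{ref:klprop3}). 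Consequently $\cone(\mu')$ and $\cone(\mu)$ are isomorphic in $\HH^0 \Tw \mathscr{C}$, so $\cone(\mu')$ is acyclic and $(a',b',\mu')$ lies in $\mathscr{A}_1' \times_{\mathscr{C}}^h \mathscr{A}_2'$.

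The step I expect to be the main obstacle is this last one: verifying that the subcategory defined by cone-acyclicity is closed under $\HH^0$-isomorphism in the gluing, and matching the two incarnations of $c_{F_1,F_2}$ under the equivalence $\eta_{G,H}$. Everything else is a direct consequence of Proposition \ref{ref:klprop3} together with the observation that taking full subcategories defined by a homotopy-invariant condition on objects preserves quasi equivalences.
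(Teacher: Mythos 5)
Your proposal is correct and follows exactly the route the paper intends: the paper itself leaves the proof implicit, remarking only that the homotopy fiber product is the full subcategory of $\mathscr{A}_1 \times_{F_1,F_2} \mathscr{A}_2$ on objects with $[\mu]$ invertible and that ``Propositions~\ref{ref:klprop2} and~\ref{ref:klprop3} have analogues for homotopy fiber products, as one would expect.'' You have correctly filled in the two details the paper omits --- that $\eta_{G,H}$ preserves the cone-acyclicity condition (since it fixes the gluing morphism up to the identification $F_1(G(a)) = (F_1G)(a)$), and that essential surjectivity onto the subcategory follows because the cone functor is a dg functor compatible with $\eta_{G,H}$ as a bimodule, hence respects $\HH^0$-isomorphisms.
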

\begin{proposition}
If there are natural transformations $n_1 : F_1' \rightarrow F_1$ and $n_2 :F_1' \rightarrow F_2$ which are quasiisomorphisms of bimodules then there is a quasiequivalence $\gamma_{n_1,n_2}$ between the homotopy fiber products of $F_1',F_2'$ and $F_1,F_2$. 
\end{proposition}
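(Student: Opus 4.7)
The plan is to deduce this from the analogous gluing statement, Proposition \ref{ref:klprop2}. Recall that $\mathscr{A}_1 \times^h_\mathscr{C} \mathscr{A}_2$ sits inside $\mathscr{A}_1 \times_{F_1,F_2} \mathscr{A}_2$ as the full dg subcategory on those $(a,b,\mu)$ for which $[\mu] \in H^0\hom_\mathscr{C}(F_1(a),F_2(b))$ is invertible, equivalently for which $\cone(\mu)$ is homotopically trivial; the same description holds for the primed version. So the natural candidate for $\gamma_{n_1,n_2}$ is the restriction of the quasi-equivalence $\gamma_{n_1,n_2}: \mathscr{A}_1 \times_{F_1',F_2'} \mathscr{A}_2 \to \mathscr{A}_1 \times_{F_1,F_2} \mathscr{A}_2$ already provided by Proposition \ref{ref:klprop2}.

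First I would invoke Proposition \ref{ref:klprop2} to get this quasi-equivalence on the ambient gluings. Then I would check that it carries the full subcategory of objects with invertible structure map into the analogous one. Unpacking the construction, $\gamma_{n_1,n_2}$ sends an object $(a,b,\mu)$ (with $\mu \in Z^0\hom_\mathscr{C}(F_1'(a),F_2'(b))$) to an object whose structure map, when passed to $H^0$, is the composition of $[\mu]$ with the quasi-isomorphisms $[n_1(a)]$ and $[n_2(b)]$ in an appropriate order. Since $n_1$ and $n_2$ are quasi-isomorphisms of bimodules, both $[n_1(a)]$ and $[n_2(b)]$ are isomorphisms in $H^0\mathscr{C}$, so the structure map of $\gamma_{n_1,n_2}(a,b,\mu)$ is an iso in $H^0\mathscr{C}$ if and only if $[\mu]$ is. Thus the restriction lands in the right subcategory and, equally importantly, the if-and-only-if is used twice: once to show the functor restricts and once to obtain essential surjectivity on the homotopy level.

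Quasi-full-faithfulness of the restriction is inherited automatically, since the hom-complexes in the fiber products agree with those in the gluings. For essential surjectivity on $H^0$, given $(a,b,\xi)$ with $[\xi]$ invertible there exists, by the quasi-equivalence on the ambient gluings, some $(a',b',\mu')$ with $\gamma_{n_1,n_2}(a',b',\mu') \cong (a,b,\xi)$ in $H^0(\mathscr{A}_1 \times_{F_1,F_2}\mathscr{A}_2)$; by the observation above $[\mu']$ is then also invertible, so this preimage already lies in the homotopy fiber product. Hence the restriction is a quasi-equivalence.

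The main obstacle is really interpretational rather than computational. The natural transformations $n_i$ are only quasi-isomorphisms of bimodules, not strict natural isomorphisms, so the intuitive prescription $\mu \mapsto n_2(b)\cdot \mu \cdot n_1(a)^{-1}$ does not literally produce a closed degree-zero morphism in $\mathscr{C}$; the actual formula must be extracted from the bimodule-tensor-product construction used in the proof of \cite[Proposition 4.9]{kl}. This is exactly the subtle point hidden inside Proposition \ref{ref:klprop2}, and once one has granted that construction, the remainder of the argument is merely the straightforward restriction-to-a-full-subcategory reasoning above, entirely parallel to what is done in Proposition \ref{ref:klprop3} for the homotopy fiber product.
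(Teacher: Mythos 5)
Your argument matches the paper's implicit proof exactly: the homotopy fiber product is a full subcategory of $\mathscr{A}_1\times_{F_1,F_2}\mathscr{A}_2$ cut out by the condition that $[\mu]$ be invertible, so one restricts the Kuznetsov--Lunts quasi-equivalence $\gamma_{n_1,n_2}$ from Proposition~\ref{ref:klprop2} and checks the restriction is well defined and essentially surjective, which you do correctly. One small simplification over your ``unpacking the formula'' step: the second assertion of Proposition~\ref{ref:klprop2}, that $c_{F_1',F_2'}$ is quasi-isomorphic to $c_{F_1,F_2}\cdot\gamma_{n_1,n_2}$ as bimodules, already shows that $\cone(\mu)$ is homotopically trivial if and only if $\cone$ of the image structure map is, so one never needs to examine the structure map of $\gamma_{n_1,n_2}(a,b,\mu)$ explicitly.
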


\begin{remark}
One can check that the homotopy equalizer of a pair of functors between pretriangulated categories is a pretriangulated category by following the proof of \cite[Lemma 4.3]{kl}. Details will be provided in Proposition \ref{prop:istriang}.
\end{remark}
We may then slightly generalize \cite[Lemma 4.2]{bbb} to determine when a pair of elements of $\mathscr{A}_1 \times_{F_1,F_2}\mathscr{A}_2$ are homotopy equivalent.

\begin{lemma}\label{lemma:isomrec}
A pair of objects $\alpha = (a_1,a_2,\mu)$ and $\beta= (b_1,b_2,\tau)$ are isomorphic in $\HH^0(\mathscr{A}_1 \times_{F_1,F_2} \mathscr{A}_2)$ if and only if there are morphisms $\xi_i: a_i \rightarrow b_i$ so that $[\xi_1]$ and $[\xi_2]$ are isomorphisms and the diagram
$$
\begin{CD}
F_1(a_1) @> \mu >> F_2(a_2) \\
@VF_1(\xi_1) VV @VF_2(\xi_2) VV \\
F_1(b_1) @> \tau >> F_2(b_2)
\end{CD}
$$
commutes up to homotopy.
\end{lemma}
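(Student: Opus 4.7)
The plan is to unpack Definition \ref{def:bbb} and reduce the statement to one about the components $\xi_1,\xi_2$ together with a witnessing homotopy. From that definition, a closed degree $0$ morphism $(\xi_1,\xi_2,h) : \alpha \to \beta$ consists of closed degree $0$ morphisms $\xi_i : a_i \to b_i$ in $\mathscr{A}_i$ and an element $h \in \hom^{-1}_{\mathscr{C}}(F_1(a_1),F_2(b_2))$ satisfying $dh = \tau\cdot F_1(\xi_1) - F_2(\xi_2)\cdot \mu$. Thus a closed degree $0$ morphism already encodes the data of maps $\xi_i$ together with a choice of homotopy making the given square commute in $\HH^0\mathscr{C}$; the lemma asserts that such a closed morphism is an isomorphism in $\HH^0$ of the fiber product precisely when both $[\xi_i]$ are isomorphisms.

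For the ``only if'' direction, the forgetful projections $p_i : \mathscr{A}_1 \times_{F_1,F_2} \mathscr{A}_2 \to \mathscr{A}_i$ defined on objects by $(a_1,a_2,\mu) \mapsto a_i$ and on morphisms by $(\xi_1,\xi_2,h) \mapsto \xi_i$ are easily checked to be dg functors and thus descend to $\HH^0$. If $[(\xi_1,\xi_2,h)]$ is an isomorphism, then each $[\xi_i] = [p_i(\xi_1,\xi_2,h)]$ is one in $\HH^0\mathscr{A}_i$, while the closedness of $(\xi_1,\xi_2,h)$ is precisely the statement that the square commutes up to the homotopy $h$.

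For the ``if'' direction, pick closed $\xi_i' : b_i \to a_i$ representing $[\xi_i]^{-1}$. Since the square commutes in $\HH^0\mathscr{C}$ and $[F_i(\xi_i')] = [F_i(\xi_i)]^{-1}$, one sees that $[\mu\cdot F_1(\xi_1') - F_2(\xi_2')\cdot \tau] = 0$, so there exists $h'$ satisfying $dh' = \mu\cdot F_1(\xi_1') - F_2(\xi_2')\cdot \tau$, and $(\xi_1',\xi_2',h') : \beta \to \alpha$ is a candidate inverse. To verify that the two compositions are cohomologous to the identities, I would invoke the semiorthogonal decomposition $\langle\HH^0\mathscr{A}_1,\HH^0\mathscr{A}_2\rangle$ on $\HH^0(\mathscr{A}_1\times_{F_1,F_2}\mathscr{A}_2)$ from the remark after Proposition \ref{ref:klprop3}: each object $(a_1,a_2,\mu)$ sits in a distinguished triangle whose outer terms are the images of $a_1$ and $a_2$ under the canonical embeddings $\mathscr{A}_i \hookrightarrow \mathscr{A}_1\times_{F_1,F_2}\mathscr{A}_2$, with the connecting morphism classified by $\mu$. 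The closed morphism $(\xi_1,\xi_2,h)$ then induces a morphism of such triangles whose two outer components are $[\xi_1]$ and $[\xi_2]$, and the triangulated five lemma concludes.

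The main obstacle I anticipate is verifying that $(\xi_1,\xi_2,h)$ truly defines a morphism of distinguished triangles, meaning that the square relating the connecting morphisms $\mu$ and $\tau$ commutes in $\HH^0$ of the fiber product; here the homotopy $h$ is indispensable, as it supplies exactly the data needed to compare the two connecting morphisms after transport by $[\xi_1]$ and $[\xi_2]$. A more elementary but lengthier alternative avoids the five lemma and instead constructs an explicit primitive for $(\xi_1'\cdot \xi_1, \xi_2'\cdot \xi_2, \text{third component}) - (\mathrm{id}_{a_1},\mathrm{id}_{a_2},0)$ in the endomorphism complex by lifting primitives for the first two slots and using the closedness relation to adjust the third.
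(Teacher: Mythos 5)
Your proposal is correct but takes a genuinely different route from the paper for the ``if'' direction. The paper unpacks the homotopy-commuting square into the closed degree zero morphism $(\xi_1,\xi_2,g)$ and then simply invokes the direct computation of \cite[Lemma 4.2]{bbb}, which proves by hand (constructing explicit primitives in the endomorphism complex) that such a morphism is a homotopy isomorphism if and only if both $\xi_i$ are. Your ``only if'' direction via the forgetful dg projections $p_i$ matches what one would expect and is fine. Your ``if'' direction instead appeals to the semiorthogonal decomposition $\langle \HH^0\mathscr{A}_1, \HH^0\mathscr{A}_2\rangle$ of $\HH^0(\mathscr{A}_1\times_{F_1,F_2}\mathscr{A}_2)$ together with the triangulated five lemma; your reading of the distinguished triangle as having $[\mu]$ classify the connecting map, and of $h$ as supplying the data that makes $(\xi_1,\xi_2,h)$ a morphism of triangles, is right. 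This is a cleaner conceptual argument, and once you are in a triangulated context you do not even need to exhibit a candidate inverse $(\xi_1',\xi_2',h')$: the five lemma alone gives invertibility.

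The one caveat to flag is that your main route implicitly assumes $\mathscr{A}_1$ and $\mathscr{A}_2$ are pretriangulated --- that is the hypothesis under which the remark following Proposition \ref{ref:klprop3} gives the semiorthogonal decomposition and a triangulated $\HH^0$. Lemma \ref{lemma:isomrec} is stated without that hypothesis, and the cited \cite[Lemma 4.2]{bbb} works in that generality by direct computation. You can either retreat to the ``more elementary but lengthier alternative'' you sketch at the end (which is what the paper does, via \cite{bbb}), or fix your argument by embedding both $\mathscr{A}_i$ into $\Tw\mathscr{A}_i$ and invoking Proposition \ref{prop:tw} to reduce to the pretriangulated case, where your five lemma argument applies verbatim. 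Either repair makes your proof complete.
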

\begin{proof}
That the diagram above commutes up to homotopy means that there is a degree $-1$ element $g$ of $\hom_\mathscr{C}(F_1(a_1),F_2(b_2))$ so that
$$
d_\mathscr{C}g = F_2(\xi_2) \cdot \mu - \tau \cdot F_1(b_2).
$$
Therefore, by definition, $(\xi_1,\xi_2,g)$ is a closed, degree 0 homomorphism between $\alpha$ and $\beta$. One can follow the proof of \cite[Lemma 4.2]{bbb} to see that $(\xi_1,\xi_2,g)$ is a homotopy isomorphism if and only if $\xi_1$ and $\xi_2$ are homotopy isomorphisms.
\end{proof}

\section{Perverse schobers and perverse sheaves of categories}\label{sect:psc}

In this section, we will describe the work of Kapranov and Schechtman \cite{ks1} on perverse schobers and give the definition of the category of global sections. We will also describe a general construction of $K$-coordinatized perverse sheaves of categories on Riemann surfaces and we define their monodromy and categories of global sections.

\subsection{Perverse sheaves on a disc}

Perverse sheaves in on a complex manifold $M$ are elements of the heart of a t-structure on the constructible bounded derived category. In the case where $M$ is the disc then a perverse sheaf $\mathscr{F}$ is a (shifted) local system away from a finite set $\Sigma$ of points. Locally around a point in $\Sigma$, $\mathscr{F}$ has a concrete linear algebraic description.
\begin{theorem}[Galligo, Grainger and Maisonobe {\cite[Theorem II.2.3]{ggm}}]\label{thm:ggm}
Around a point $p \in \Sigma$, the data of a perverse sheaf is equivalent to the category of quadruples $(\phi,\psi,\mathrm{var},\mathrm{can})$ where $\phi$ and $\psi$ are finite dimensional vector spaces and $\mathrm{can} \colon \psi \rightarrow \phi, \mathrm{var}\colon \phi \rightarrow \psi$ are homomorphisms so that
\begin{equation} \label{eq:quivereq}
\mathrm{id}_\phi + \mathrm{can} \cdot \mathrm{var} \qquad \text{and} \qquad \mathrm{id}_\psi + \mathrm{var}\cdot \mathrm{can}
\end{equation}
are invertible. There is a similar description for perverse sheaves on $\mathbb{C}$ with stratification $\Lambda$ given by the coordinate axes and their intersections.
\end{theorem}
The method of proof of \cite[Theorem II.2.3]{ggm} gives us more than this. Let $K = \mathbb{R}_{\geq 0}$ be the intersection of the positive real axis in $\mathbb{C}$ with a disc $D$ centered around $0$ and let $\Lambda_K$ be the stratification of $K$ with one-dimensional stratum $\Lambda_{K,1} = \mathbb{R}_{> 0}$ and zero-dimensional stratum $\Lambda_{K,0} = \{0\}$. Then one builds a functor $\mathscr{R}_K$ from $\mathrm{Perv}(D,\Sigma)$ to $\mathrm{Constr}(K,\Lambda_K)$ which is full and faithful, and whose image is the set of constructible sheaves on $K$ with fiber $\phi$ on $\Lambda_{K,0}$ and $\psi$ at each point of $\Lambda_{K,1}$. The morphism $\mathrm{var}$ is the variation map, and $\mathrm{can}$ is the canonical map from the theory of perverse sheaves. If $\mathscr{F}$ is a perverse sheaf, then the corresponding vector space $\phi$ is the vanishing cycles sheaf of $\mathscr{F}$, $\phi$ is the nearby cycles sheaf of $\mathscr{F}$.

Kapranov and Schechtman have generalized this idea \cite{ks1,ks2}. They show that if $X$ is a Riemann surface $S$ with boundary and with a set of points $\Sigma$, then for an appropriate choice of skeleton $K$ of $S$ there is a description of $\mathrm{Perv}(S,\Sigma)$, in terms of the combinatorics of $K$.

\begin{figure}
\begin{tikzpicture}
\fill (0,0) circle[radius=2pt] ;
\draw[very thick] (0,0) to (.71,.71);
\draw[very thick] (0,0) to (1,0);
\draw[very thick] (0,0) to (.71,-.71);
\draw[very thick] (0,0) to (0,-1);
\draw[very thick] (0,0) to (-.71,.71);
\draw[very thick] (0,0) to (-1,0);
\draw[very thick] (0,0) to (-.71,-.71);
\node at (0,.8) {$\dots$};
\end{tikzpicture}\caption{A skeleton $K$ near the point $s$. \label{fig:wheel}}
\end{figure}
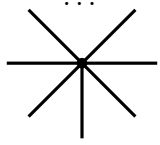

The most difficult part of their construction is to understand the restriction map from $\mathrm{Perv}(D,\Sigma)$ to $\mathrm{Constr}(K)$ around a point where $K$ looks like a wheel with $n$-spokes, as in Figure \ref{fig:wheel}. We refer to \cite{ks2} for details. 

A perverse sheaf on $(D,\Sigma)$, where $D$ is a closed disc in $\mathbb{C}$ and $\Sigma$ is a collection of points can be constructed from the following data. First, fix a skeleton $K$ given as follows.
\begin{enumerate}
\item A choice of a point $s \notin \Sigma$.
\item A choice of a path $\gamma_\infty$ from $s$ to the boundary of $D$ which does not pass through any point $p \in \Sigma$.
\item A choice of ordering of the points $p \in \Sigma$. Label the points in $\Sigma$ as $p_1,\dots, p_k$ based on this ordering.
\item A choice of continuous paths $\gamma_{i}$ from $s$ to each $p_i \in \Sigma$ so that $\gamma_i$ near $s$ are ordered counterclockwise as $\gamma_1,\dots, \gamma_k, \gamma_\infty$. We assume no paths intersect except at $s$.
\end{enumerate}
Then let $K=\gamma_\infty \cup \bigcup_{p\in \Sigma}\gamma_{p}$, with stratification given by points $\Lambda_{K,0}= \Sigma' = s \cup \Sigma$ and $\Lambda_{K,1} = K \setminus \Sigma'$. A perverse sheaf on $(D,\Sigma)$ is then equivalent to a constructible sheaf $\mathscr{S}$ on $(K,\Sigma')$ with fiber $\psi$ over any point not in $\Sigma'$ and fibers $\phi_i$ at each point in $\Sigma$, along with maps $\mathrm{var}_i : \phi_i \rightarrow \psi$ and $\mathrm{can}_i : \psi \rightarrow \phi_i$ satisfying Equation \ref{eq:quivereq}.
\begin{remark}\label{rmk:Kk}
Since the isotopy type of the skeleton $K$ depends only on the number of points in $\Sigma$, we will let $K_k$ be the skeleton associated to a set of $k$ points in $\Delta$, since we will sometimes refer only to the skeleton, forgetting its embedding in $\Delta$.
\end{remark}

\begin{figure}
\begin{tikzpicture}
\draw[dashed, very thick] (0,0) circle[radius=1 in] ;
\fill (-0.6,1.5) circle[radius= 2pt];
\fill (-0.6,-1.5) circle[radius= 2pt];
\fill (-1,-0.5) circle[radius= 2pt];
\fill (-1,0.5) circle[radius= 2pt];
\fill (1,1.5) circle[radius=2pt];

\fill (1,0) circle[radius= 2pt];

\draw[very thick] (2.5,0) to (1,0);
\draw[very thick] (1,0) .. controls (0.2,1.25) .. (-0.6,1.5);
\draw[very thick] (1,0) .. controls (0.2,-1.25) .. (-0.6,-1.5);
\draw[very thick] (1,0) .. controls (0.2,-0.5) .. (-1,-0.5);
\draw[very thick] (1,0) .. controls (0.2,0.5) .. (-1,0.5);
\draw[very thick] (1,0) .. controls (1.2,.5) .. (1,1.5);

\node at (1.4,1.5){$p_1$};
\node at (-0.9,1.5){$p_2$};
\node at (-1.3,0.5){$p_3$};
\node at (-1.3,-0.5){$p_4$};
\node at (-0.9,-1.5){$p_5$};

\node at (1.1,-0.2){$s$};

\end{tikzpicture}\caption{The skeleton of a perverse schober.}
\end{figure}
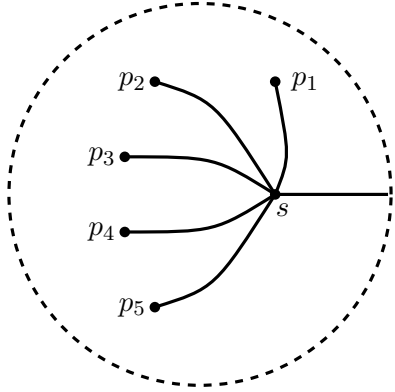

\subsection{Perverse schobers}\label{sect:pschob}
Let $D$ be the closed unit disc in $\mathbb{C}$ and let $K = \mathbb{R}_{\geq 0} \cap D$. Kapranov and Schechtman define a $K$-coordinatized perverse schober on $(D,0)$ to be the data of two idempotent closed pretriangulated dg categories $\mathscr{A}$ and $\mathscr{C}$ along with a spherical functor $F: \mathscr{A} \rightarrow \mathscr{C}$.
\begin{defn}[Anno and Logvinenko {\cite[Definition 1.1]{al}}]
Let $F$ be a quasifunctor $F$ from $\mathscr{A}$ to $\mathscr{B}$ with right and left adjoint quasifunctors $R$ and $L$. The quasifunctors $F$ is interpreted as an $\mathscr{A}$-$\mathscr{B}$ bimodule, and $R$ and $L$ are interpreted as $\mathscr{B}$-$\mathscr{A}$ bimodules. We define
\begin{align*}
T = \Cone(FR \rightarrow \mathrm{id}_\mathscr{B}),&\quad C' = \cone(LF \rightarrow \mathrm{id}_{\mathscr{A}}) \\
C = \cone(\mathrm{id}_\mathscr{A} \rightarrow RF)[-1] , & \quad T' = \cone(\mathrm{id}_\mathscr{B} \rightarrow FL)[-1]
\end{align*}
Here composition of functors is intepreted as tensor product of bimodules and the cones are taken in categories of $\mathscr{B}$-$\mathscr{B}$ and $\mathscr{A}$-$\mathscr{A}$ bimodules where appropriate. The quasifunctor $F$ is called \emph{spherical} if $T$ induces an autoequivalence of $\HH^0\mathscr{B}$ and the composition
$$
R \rightarrow RFL \rightarrow CL[1]
$$
induces a quasiisomorphism of dg bimodules between $R$ and $CL[1]$. Under these conditions, $T,T'$ and $C,C'$ form mutually quasiinverse pairs.
\end{defn}
The condition that $T$ is an autoequivalence is analogous to the condition that $\mathrm{id}_\phi + \mathrm{can} \cdot \mathrm{var}$ be invertible. In the case of perverse sheaves, there is only one map  between $\psi$ and $\phi$, whereas for a spherical functor, we require that there be two functors $R$ and $L$ from $\mathscr{C}$ to $\mathscr{A}$. The condition that $R \cong CL[1]$ should be read as saying that these two maps, while not identical, are at least compatible.

\begin{defn}
A \emph{$K$-coordinatized perverse schober} on $(D,\Sigma)$ is a choice of a skeleton $K$ as above, along with the data of a category $\mathscr{C}$, an ordered collection of categories $\mathscr{A}_1,\dots, \mathscr{A}_k$ corresponding to each point $p_i \in \Sigma$ and spherical functors $F_i : \mathscr{A}_i \rightarrow \mathscr{C}$. We will call this data $\mathscr{S}_K(\mathscr{A}_i,F_i)$. We will call the category $\mathscr{C}$ the \emph{fiber category} of $\mathscr{S}_K(\mathscr{A}_i,F_i)$.
\end{defn}
This data will give rise to what is essentially a constructible sheaf of categories on $K$ if we can determine an appropriate category as a fiber over the point $s$. This will be described in the next section.
\begin{remark}
If we assume that $\mathscr{C}$ has the property that the Serre functor of $\HH^0\mathscr{C}$ denoted $\mathsf{S}_{[\mathscr{C}]}$ is equal to the shift functor $ [m]$ for some integer $m$ and that $C$ induces the Serre functor on $\HH^0\mathscr{A}$ up to shift, then $R$ and $L$ are quasiequivalent up to shift. This occurs if $F$ admits a weak right Calabi--Yau structure \cite{kps}. An alternate definition of perverse schobers assuming that $F$ admits a weak right Calabi--Yau structure may also be interesting. We will see similar structures in Section \ref{sect:sod1}.
\end{remark}
\subsection{The category of global sections of a perverse schober}\label{sect:gsections}

Let $\mathscr{C}$ be a pretriangulated dg category, then define $A_2(\mathscr{C}) = \mathscr{C} \times_{\mathrm{id},\mathrm{id}} \mathscr{C}$. We will often replace this with the quasiequivalent category $\Tw \mathscr{C} \times_{\mathrm{id},\mathrm{id}} \Tw\mathscr{C}$. This category admits two obvious functors $f_1$ and $f_2$ to $\Tw \mathscr{C}$ sending $(a,b,\mu)$ to the objects $a$ and $b$ of $\Tw \mathscr{C}$ respectively. Since 
\[
\hom^i_{A_2(\mathscr{C})}((a,b,\mu),(c,d,\xi)) = \hom^i_\mathscr{C}(a,c) \oplus \hom^i_\mathscr{C}(b,d) \oplus \hom^{i-1}_\mathscr{C}(a,d)
\]
we have natural projections onto $\hom^i_\mathscr{C}(a,c)$ and $\hom^i_\mathscr{C}(b,d)$ which describe how these functors act on homomorphisms. There's another functor $f_3$ from $A_2(\mathscr{C})$ to $\Tw \mathscr{C}$ given by sending $(a,b,\mu)$ to $\cone(\mu)$. The object $\cone(\mu)$ is represented as a twisted complex by 
\[
\left( a \oplus b[-1], \left(\begin{matrix} 0 & \mu \\ 0 & 0  \end{matrix} \right) \right)
\]
A homomorphism $(r,s,t)$ of degree $i$ is sent to 
\[
\left( \begin{matrix} r & t \\ 0 & s \end{matrix} \right).
\]
We define the category $A_3(\mathscr{C})$ to be the homotopy fiber product of the diagram;
\[
\begin{tikzcd}
A_3(\mathscr{C}) \ar[r,"g"] \ar[d,"h"] & A_2(\mathscr{C}) \ar[d,"f_3"]\\
A_2(\mathscr{C}) \ar[r,"f_1"]  & \Tw \mathscr{C}
\end{tikzcd}
\]
From this we obtain four functors $f_1,\dots, f_4$ from $A_3(\mathscr{C})$ to $\Tw\mathscr{C}$ as, respectively, $f_1\cdot g, f_2 \cdot g, f_2\cdot h, f_3 \cdot h$. Applying this construction recursively, we define categories $A_k(\mathscr{C})$ for any $k$ along with functors $f_i :A_k(\mathscr{C}) \rightarrow \Tw \mathscr{C}$ for $i=1,\dots, k$. The category $A_k(\mathscr{C})$ is the category that plays the role of the stalk of the constructible sheaf of categories on $K$ at the point $s$. Therefore the following definition is natural.
\begin{defn}\label{defn:Gl}
The \emph{category of global sections} of a perverse schober $\mathscr{S}_K(\mathscr{C},\mathscr{A}_p,F_p)$, denoted by $\Gamma \mathscr{S}_K(\mathscr{C},\mathscr{A}_p,F_p)$ is the homotopy fiber product of the following diagram,
\begin{equation}\label{eq:hfib}
\mathscr{A}_1 \times \dots \times \mathscr{A}_k \xrightarrow{F_1 \times \dots \times F_k}
\Tw \mathscr{C} \times \dots \times \Tw \mathscr{C} \xleftarrow{f_1 \times \dots \times f_k} A_k(\mathscr{C}).
\end{equation}
The functor $f_{k+1} :A_k(\Tw \mathscr{C}) \rightarrow \Tw \mathscr{C}$ induces a functor $\Gamma \mathscr{S}_{K}(\mathscr{A}_i,F_i) \rightarrow \Tw \mathscr{C}$ which we denote $s_\infty$.
\end{defn}

The following lemma shows that for computational purposes, the word homotopy is not essential here.

\begin{lemma}\label{lemma:DK}
for any dg category $\mathscr{C}$, the functor
\[
\mathscr{C} \times_{\mathrm{id},\mathrm{id}}\mathscr{C} \xrightarrow{f_1 \times f_2}\mathscr{C} \times \mathscr{C}
\]
is a fibration in the DK model stucture.
\end{lemma}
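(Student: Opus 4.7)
The plan is to verify the two properties that characterize fibrations in the DK model structure on $\mathrm{dgcat}_k$ as recalled earlier in the section: (a) surjectivity of the induced map on hom complexes, and (b) the property that quasi-isomorphisms in the target lift to quasi-isomorphisms upstairs.

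First I would unfold Definition \ref{def:bbb} in the special case $F_1 = F_2 = \mathrm{id}_\mathscr{C}$. Objects of $\mathscr{C}\times_{\mathrm{id},\mathrm{id}}\mathscr{C}$ are triples $(a,b,\mu)$ with $\mu \in \mathrm{Z}^0 \hom_\mathscr{C}(a,b)$, and a degree $i$ morphism from $(a,b,\mu)$ to $(c,d,\xi)$ is a triple
\[
(f,g,h) \in \hom_\mathscr{C}^i(a,c) \oplus \hom_\mathscr{C}^i(b,d) \oplus \hom_\mathscr{C}^{i-1}(a,d),
\]
with differential $d(f,g,h) = (df,\, dg,\, dh + g\cdot\mu - \xi\cdot f)$. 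The functor $f_1\times f_2$ sends $(f,g,h)\mapsto (f,g)$. This is a chain map by inspection of the first two components of the differential, and it is surjective in each degree with obvious section $(f,g)\mapsto (f,g,0)$; this takes care of (a).

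The substance of the proof is (b). Given $X=(a,b,\mu)$ and a quasi-isomorphism $(\alpha,\beta):(a,b)\to (c,d)$ in $\mathscr{C}\times\mathscr{C}$, the natural candidate for the lift, suggested by Lemma \ref{lemma:isomrec}, is $X'=(c,d,\mu')$ with $\mu' = \beta\cdot\mu\cdot\widetilde{\alpha}$, where $\widetilde{\alpha}\in \hom_\mathscr{C}^0(c,a)$ is a closed degree zero homotopy inverse to $\alpha$ and $\sigma\in\hom_\mathscr{C}^{-1}(a,a)$ satisfies $d\sigma = \widetilde{\alpha}\cdot\alpha - \mathrm{id}_a$. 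Since $d\beta = d\mu = d\widetilde{\alpha} = 0$, this $\mu'$ is closed of degree zero, so $X'$ is a well-defined object projecting to $(c,d)$.

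The main obstacle is supplying the homotopy component of the lifted morphism: a closed degree zero morphism $X\to X'$ projecting to $(\alpha,\beta)$ requires an $h\in \hom_\mathscr{C}^{-1}(a,d)$ with $dh = \mu'\cdot\alpha - \beta\cdot\mu$. Since $\mu'\cdot\alpha - \beta\cdot\mu = \beta\cdot\mu\cdot(\widetilde{\alpha}\cdot\alpha - \mathrm{id}_a) = \beta\cdot\mu\cdot d\sigma$ and $\beta,\mu$ are closed, a direct computation shows that $h = \beta\cdot\mu\cdot\sigma$ does the job. It then remains only to check that the resulting morphism $(\alpha,\beta,h)$ is itself a quasi-isomorphism in $\mathscr{C}\times_{\mathrm{id},\mathrm{id}}\mathscr{C}$, which is immediate from Lemma \ref{lemma:isomrec} since $\alpha$ and $\beta$ are quasi-isomorphisms.
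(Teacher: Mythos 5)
Your proof is correct and takes essentially the same route as the paper's: you unwind Definition \ref{def:bbb} to check surjectivity on hom complexes, and then set $\mu' = \beta\cdot\mu\cdot\widetilde{\alpha}$ (the paper writes $\mu' = \tau\cdot\mu\cdot\xi'$) and invoke Lemma \ref{lemma:isomrec} to produce the lifted quasi-isomorphism. The only difference is that you make the homotopy component $h$ explicit rather than leaving it to the cited lemma, which is a fine amount of extra care (modulo a sign depending on the exact Leibniz convention, either $\pm\beta\cdot\mu\cdot\sigma$ works).
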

\begin{proof}
We have that $f_1 \times f_2$ acts on objects and homomorphisms as
\[
(a,b,\mu) \mapsto (a,b), \qquad (r_1,r_2,g) \mapsto (r_1,r_2)
\]
Since $\hom_{(\Tw\mathscr{C})^2}((a,b),(c,d)) = \hom_{\Tw \mathscr{C}}(a,c) \times \hom_{\Tw\mathscr{C}}(b,d)$, any homomorphism can be lifted to a homomorphism of $A_2(\mathscr{C})$. We just need to show that if $(a,b,\mu) \in A_2(\mathscr{C})$ and there $(\xi,\tau): (a,b) \rightarrow (a',b')$ for $a,b,a',b'$ in $\Tw\mathscr{C}$ descends to an isomorphism in the homotopy category, then there is an object $(a',b',\mu')$ which is quasiisomorphic to $(a,b,\mu)$ in $A_2(\mathscr{C})$. If $\xi'$ is a quasiinverse of $\xi$, then we get a morphism $\mu' = \tau\cdot \mu\cdot \xi'$ from $a'$ to $b'$. We may apply Lemma \ref{lemma:isomrec} to see that $(a',b',\mu')$ is quasiisomorphic to $(a,b,\mu)$.
\end{proof}
Therefore, if there are only two categories $\mathscr{A}_1,\mathscr{A}_2$, the category $\Gamma\mathscr{S}_K(\mathscr{A}_i,F_i)$ can be computed as the ordinary fiber product of the diagram in Equation \ref{eq:hfib} instead of the homotopy fiber product. It is not hard to show that this just recovers $\mathscr{A}_1\times_{F_1,F_2} \mathscr{A}_2$.
\begin{proposition}\label{prop:bcase}
If $|\Sigma| = 2$ and $\mathscr{S}_K(\mathscr{A}_i,F_i)$ is a perverse schober then $\Gamma\mathscr{S}_K(\mathscr{A}_i,F_i)$ is quasiequivalent to $\mathscr{A}_1 \times_{F_1,F_2} \mathscr{A}_2$.
\end{proposition}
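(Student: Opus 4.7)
The plan is to reduce the homotopy fiber product defining $\Gamma\mathscr{S}_K(\mathscr{A}_i,F_i)$ to a strict fiber product, and then to unwind Definition \ref{defn:Gl} and match it on the nose with Definition \ref{def:bbb}. Setting $n=2$ in Definition \ref{defn:Gl}, $\Gamma\mathscr{S}_K(\mathscr{A}_i,F_i)$ is the homotopy fiber product of
\[
\mathscr{A}_1\times\mathscr{A}_2 \xrightarrow{F_1\times F_2} \Tw\mathscr{C}\times\Tw\mathscr{C} \xleftarrow{f_1\times f_2} A_2(\mathscr{C}),
\]
so the first step is to replace this by an ordinary fiber product.

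First, I invoke Lemma \ref{lemma:DK}, which says that $f_1\times f_2:A_2(\mathscr{C})\to \Tw\mathscr{C}\times\Tw\mathscr{C}$ is a fibration in the Dwyer-Kan model structure. Since every dg category is fibrant for the DK model structure and this leg of the cospan is a fibration, the fibrant-replacement version of Remark \ref{rmk:ordinaryfiber} identifies the homotopy fiber product with the strict fiber product taken along the same diagram. Thus it suffices to analyze the strict pullback.

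Second, I compute the strict fiber product object-by-object and morphism-by-morphism. An object is a quadruple $((a_1,a_2),(c_1,c_2,\mu))$ with $F_i(a_i)=c_i$, which is tautologically a triple $(a_1,a_2,\mu)$ where $\mu\in \mathrm{Z}^0\hom_\mathscr{C}(F_1(a_1),F_2(a_2))$; this is exactly the object data of $\mathscr{A}_1\times_{F_1,F_2}\mathscr{A}_2$. Similarly, a homomorphism in the strict fiber product is a pair consisting of $(f_1,f_2)\in\hom_{\mathscr{A}_1}(a_1,b_1)\oplus\hom_{\mathscr{A}_2}(a_2,b_2)$ and $(r_1,r_2,h)\in\hom_{A_2(\mathscr{C})}((F_1(a_1),F_2(a_2),\mu),(F_1(b_1),F_2(b_2),\xi))$ subject to $r_i=F_i(f_i)$, which reduces precisely to a triple $(f_1,f_2,h)$ as in Definition \ref{def:bbb}.

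Third, I check that the differential and composition inherited from the fiber product match those of Definition \ref{def:bbb}. This is a straightforward substitution: the differential on $A_2(\mathscr{C})$ (which is the case $F_1=F_2=\mathrm{id}$ of Definition \ref{def:bbb}) evaluated on $(F_1(f_1),F_2(f_2),h)$ yields exactly $d(f_1,f_2,h)=(df_1,df_2,dh+F_2(f_2)\mu-\xi F_1(f_1))$, and the composition law on $A_2(\mathscr{C})$ restricts analogously. The resulting functor from $\mathscr{A}_1\times_{F_1,F_2}\mathscr{A}_2$ to the strict fiber product is then an isomorphism of dg categories, hence a quasi equivalence as required. The only nontrivial step is verifying Lemma \ref{lemma:DK}, which supplies the fibration needed to swap the homotopy fiber product for the strict one; after that the argument is a routine matching of the two explicit models, with only Koszul sign bookkeeping to watch.
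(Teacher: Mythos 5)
Your proposal takes essentially the same route as the paper: invoke Lemma \ref{lemma:DK} to show that $f_1\times f_2$ is a DK fibration, replace the homotopy fiber product by the strict one, and then match term-by-term with Definition \ref{def:bbb}. The paper's own justification is a two-sentence assertion following Lemma \ref{lemma:DK}, so your version actually supplies the unwinding the paper leaves implicit, and that unwinding is correct (objects become triples $(a_1,a_2,\mu)$, morphisms become triples $(f_1,f_2,h)$, and the differential and composition formulas of $A_2(\mathscr{C})$ restrict to exactly those of Definition \ref{def:bbb}). One small caveat worth noting: Remark \ref{rmk:ordinaryfiber} as stated concerns cofibrations and defines the ``ordinary fiber product'' as the iso-subcategory of the homotopy fiber product rather than the usual strict pullback; what you actually need and correctly use is the standard right-properness fact for the DK model structure (all dg categories are fibrant, so a strict pullback along a fibration computes the homotopy pullback), so it would be cleaner to cite that fact directly rather than a ``fibrant-replacement version'' of the remark.
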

This quasiequivalence (which we denote by $Q$) can be written as follows. Objects of $\Gamma \mathscr{S}_K(\mathscr{A}_i,F_i)$ are given by
\[
((a,b,\mu), (r,s), (g_{a,r},g_{b,s}))
\]
where $(a,b,\mu) \in \mathrm{Ob}(A_2(\mathscr{C})), (r,s) \in \mathrm{Ob}(\mathscr{A}_1) \times \mathrm{Ob}(\mathscr{A}_2)$, and $g_{a,r} \in \hom^0_\mathscr{C}(a,F_1(r))$ and $g_{b,s} \in \hom^0_\mathscr{C}(b,F_2(s))$ are quasiisomorphisms. Objects $(c,d,\xi) \in \mathrm{Ob}(\mathscr{A}_1\times_{F_1,F_2}\mathscr{A}_2)$ are mapped by $Q$ to 
\[
((F_1(c),F_2(d),\xi), (c,d), (\mathrm{id}_{F_1(c)}, \mathrm{id}_{F_2(c)}))
\]
The action of $Q$ on homomorphisms is defined in a straightforward way. We obtain the following diagram.
\begin{equation}\label{eq:composing}
\begin{tikzcd} 
\mathscr{A}_1 \times_{F_1,F_2} \mathscr{A}_2 \arrow[drr, bend left, "x"] \arrow[ddr, bend right, "y"] \arrow[dr, "{Q}" description] & & 
\\ &  \Gamma\mathscr{S}_K(\mathscr{C},\mathscr{A}_i,F_i) \arrow[r, "p"] \arrow[d, "q"] & \mathscr{A}_1 \times \mathscr{A}_2 \arrow[d, "F_1 \times F_2"] 
\\ & A_2(\mathscr{C}) \arrow[r, "f_1 \times f_2"] & \mathscr{C} \times \mathscr{C} 
\end{tikzcd}
\end{equation}
where $x,y$ and $p,q$ are the natural maps. This diagram commutes except in the bottom right square, where it only commutes up to homotopy. Composing $y$ with $f_3 : A_2(\Tw \mathscr{C}) \rightarrow \Tw\mathscr{C}$ recovers the map $c_{F_1,F_2}$. In other words, $s_\infty \cdot Q = c_{F_1,F_2}$.

We will provide some notation for repeated gluing of dg categories.
\begin{defn}\label{defn:gluedcat}
Let us assume that we have $F_i: \mathscr{A}_i \rightarrow \Tw\mathscr{C}$ for $i = 1, \dots, k$. Then, as before, we may construct the category $\mathscr{A}_1 \times_{F_1,F_2} \mathscr{A}_2$ which comes equipped with a functor $c_{F_1,F_2} : \mathscr{A}_1 \times_{F_1,F_2} \mathscr{A}_2 \rightarrow \Tw \mathscr{C}$ sending $(a,b,\mu)$ to $\cone(\mu)$. We then have a pair of functors, $c_{F_1,F_2}, F_3$ to $\Tw \mathscr{C}$, so we may construct the category $(\mathscr{A}_1 \times_{F_1,F_2}\mathscr{A}_2) \times_{c_{F_1,F_2},F_3} \mathscr{A}_3$. Repeating this process, we obtain a category glued together from $\mathscr{A}_1,\dots, \mathscr{A}_k$. We will denote this category $\mathscr{G}\ell(\mathscr{A}_i,F_i)$. Let $c_{F_1,\dots, F_n}$ denote the final cone functor to $\Tw \mathscr{C}$.
\end{defn}
Now we will prove the following theorem.
\begin{theorem}\label{thm:glsect}
There is a quasiequivalence $Q_{F_1,\dots,F_k}:\mathscr{G}\ell(\mathscr{A}_i,F_i) \rightarrow  \Gamma\mathscr{S}_{K_k}(\mathscr{A}_i,F_i)$ so that $s_\infty \cdot Q_{F_1,\dots, F_k} = c_{F_1,\dots, F_k}$.
\end{theorem}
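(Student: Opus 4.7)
The plan is to proceed by induction on $k$, with the base case $k=2$ handled by Proposition \ref{prop:bcase} together with the observation there that $s_\infty \cdot Q = c_{F_1,F_2}$. The inductive step will combine Proposition \ref{prop:bcase} applied in a larger ambient setting with the recursive definition of $A_n(\mathscr{C})$.

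For the inductive step, assume we have a quasi equivalence $Q_{F_1,\dots,F_{k-1}}$ satisfying the compatibility with $s_\infty$ and $c_{F_1,\dots,F_{k-1}}$. By Definition \ref{defn:gluedcat} the gluing at step $k$ is
\[
\mathscr{G}\ell(\mathscr{A}_i,F_i)_k \;=\; \mathscr{G}\ell(\mathscr{A}_i,F_i)_{k-1} \times_{c_{F_1,\dots,F_{k-1}},\, F_k} \mathscr{A}_k.
\]
Invoking the inductive hypothesis together with Proposition \ref{ref:klprop3} (so that replacing a category by a quasi equivalent one does not change the gluing up to quasi equivalence), this is quasi equivalent to $\Gamma\mathscr{S}_{K_{k-1}}(\mathscr{A}_i,F_i)\times_{s_\infty,F_k}\mathscr{A}_k$. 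Applying Proposition \ref{prop:bcase} to the pair $(\Gamma\mathscr{S}_{K_{k-1}},\mathscr{A}_k)$ with the functors $(s_\infty,F_k)$ exhibits this gluing as the homotopy fiber product
\[
\bigl(\Gamma\mathscr{S}_{K_{k-1}}(\mathscr{A}_i,F_i)\times \mathscr{A}_k\bigr) \times^h_{(\Tw\mathscr{C})^2} A_2(\mathscr{C}),
\]
where the left leg uses $(s_\infty, F_k)$ and the right leg uses $(f_1,f_2)$.

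Next I would unfold $\Gamma\mathscr{S}_{K_{k-1}}$ via Definition \ref{defn:Gl} and reassemble the iterated homotopy fiber product. The key structural fact is that $A_k(\mathscr{C})$ is, by its recursive definition, the homotopy fiber product of $A_{k-1}(\mathscr{C}) \xrightarrow{f_k} \Tw\mathscr{C} \xleftarrow{f_1} A_2(\mathscr{C})$, and the functor $s_\infty$ on $\Gamma\mathscr{S}_{K_{k-1}}$ is obtained by composing the projection to $A_{k-1}(\mathscr{C})$ with $f_k$. Using Lemma \ref{lemma:DK} to ensure that the relevant legs are fibrations in the DK model structure (so ordinary and homotopy fiber products agree there), together with the standard associativity of homotopy limits, the composite can be rewritten as
\[
\bigl(\mathscr{A}_1 \times \dots \times \mathscr{A}_k\bigr) \times^h_{(\Tw\mathscr{C})^k} A_k(\mathscr{C}),
\]
with the maps $F_1\times\dots\times F_k$ and $f_1\times\dots\times f_k$. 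This is exactly $\Gamma\mathscr{S}_{K_k}(\mathscr{A}_i,F_i)$, and the composite quasi equivalence defines $Q_{F_1,\dots,F_k}$.

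The compatibility $s_\infty \cdot Q_{F_1,\dots,F_k} = c_{F_1,\dots,F_k}$ is checked by tracking cone outputs through the induction. At each gluing step, the additional functor $c$ produced on the $\mathscr{G}\ell$-side corresponds, via the base case compatibility $s_\infty\cdot Q = c_{F_1,F_2}$, to the new ``last'' component of $A_n(\mathscr{C})$, which is precisely the functor playing the role of $s_\infty$ in $\Gamma\mathscr{S}_{K_k}$. I expect the main obstacle to lie in the associativity step: verifying that the iterated use of $A_2(\mathscr{C})$ fiber products in the gluing side matches the intrinsic recursive description of $A_k(\mathscr{C})$, and that no spurious objects are created. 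Here one should either appeal to general associativity of homotopy limits indexed by the appropriate diagram, or give a direct comparison functor written out in terms of the explicit object/morphism descriptions of Definition \ref{def:bbb}, and then check that the comparison is essentially surjective and quasi fully faithful via Lemma \ref{lemma:isomrec}.
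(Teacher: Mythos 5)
Your proposal proceeds by the same induction as the paper, with the base case handled by Proposition \ref{prop:bcase} and the inductive step exploiting the recursive structure of $A_k(\mathscr{C})$ (the homotopy cartesian square relating $A_k$, $A_{k-1}$, and $A_2$) together with another application of Proposition \ref{prop:bcase} and associativity of homotopy limits. The paper presents this by unwinding the big homotopy limit diagram defining $\Gamma\mathscr{S}_{K_k}$ while you reassemble from the $\mathscr{G}\ell$ side, but the underlying argument, including the tracking of the cone functor to verify $s_\infty\cdot Q_{F_1,\dots,F_k}=c_{F_1,\dots,F_k}$, is the same.
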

\begin{proof}
We proceed by induction. Assume that for $\mathscr{A}_1,\dots, \mathscr{A}_{k-1}$ and $F_1,\dots, F_{k-1}$ there is a quasiequivalence $Q_{F_1,\dots,F_{k-1}}:\mathscr{G}\ell(\mathscr{A}_i,F_i) \rightarrow \Gamma\mathscr{S}_{K_{k-1}}(\mathscr{A}_i, F_i)$. The case where $n=2$ is just Proposition \ref{prop:bcase}.

We now want to compute the homotopy limit of the diagram in Equation \ref{eq:bigdiag}.
\begin{equation}\label{eq:bigdiag}
\begin{tikzcd}
&&  && \mathscr{A}_1 \times \dots \times \mathscr{A}_{k-1} \ar[d,"F_1\times \dots \times F_{k-1}"]\\
& A_{k}(\mathscr{C}) \ar[r] \ar[d]& A_{k-1}(\mathscr{C}) \ar[rr,"f_1\times\dots \times f_{k-1}"] \ar[d,"f_k"] && \underbrace{\Tw\mathscr{C} \times \dots \times \Tw \mathscr{C}}_{(k-1)\text{ copies}} \\
& A_2(\mathscr{C}) \ar[r,"f_1"] \ar[d,"f_2"] & \Tw \mathscr{C} & & \\
\mathscr{A}_k \ar[r,"F_k"] & \Tw \mathscr{C} & & &
\end{tikzcd}
\end{equation}
where the square is homotopy cartesian. By assumption, the homotopy fiber product of the top right span is $\mathscr{G}\ell(\mathscr{A}_i,F_i)$. Therefore, this reduces to finding the homotopy limit of the following diagram.
\[
\begin{tikzcd}
& & \mathscr{G}\ell(\mathscr{A}_i,F_i) \ar[d] \ar[dd, bend left = 50, "c_{F_1,\dots, F_{k-1}}"] \\
& A_k(\mathscr{C}) \ar[r] \ar[d] & A_{k-1}(\mathscr{C}) \ar[d] \\
& A_2(\mathscr{C}) \ar[r,"f_1"] \ar[d,"f_2"] & \Tw \mathscr{C}  \\
\mathscr{A}_k \ar[r,"F_k"] & \Tw \mathscr{C} &
\end{tikzcd}
\]
Equivalently, since the square is homotopy cartesian, this reduces to the homotopy limit of the following diagram.
\[
\begin{tikzcd}
& \mathscr{G}\ell(\mathscr{A}_i,F_i) \times \mathscr{A}_k \ar[d,"c_{F_1,\dots, F_{k-1}}\times F_k"] \\
A_2(\mathscr{C}) \ar[r,"f_1\times f_2"] & \Tw \mathscr{C} \times \Tw \mathscr{C}
\end{tikzcd}
\]
By Proposition \ref{prop:bcase}, there is a quasiequivalence $Q$ between $\mathscr{G}\ell(\mathscr{A}_i,F_i) \times_{c_{F_1,\dots, F_{k-1}},F_k} \mathscr{A}_k$ and the homotopy fiber product of this diagram. By definition, then the homotopy limit of the diagram in Equation \ref{eq:bigdiag} is equivalent to $\mathscr{G}\ell(\mathscr{A}_i,F_i)$ and the natural map to $A_2(\mathscr{C})$ composed with $f_3$ recovers $c_{F_1,\dots, F_k}$ as required.

\end{proof}

\subsection{More general perverse sheaves of categories}\label{sect:gpsc}

We now define perverse sheaves of categories in greater generality. Let $S$ be a connected, oriented, compact topological surface with $n$ boundary components and $k$ marked points $\Sigma$. In the following, by embedded graph we mean a finite collection of vertices $\mathrm{Vert}(K)$ made up of points in the interior of $S$ and a finite set of edges $\mathrm{Ed}(K)$. Edges will be embeddings $g_e$ of the closed interval $[0,1]$ into $S$ so that one of $g_e(0)$ or $g_e(1)$ is a vertex and the other is either a vertex or contained in a boundary component. We assume $g_e((0,1))$ is contained in the interior of $S$ and that two edges $e_1$ and $e_2$ intersect only in elements of $\mathrm{Vert}(K)$. 

A perverse sheaf of categories starts with the following data.
\begin{enumerate}
\item[($*$)] A spanning graph $K$ on $S$. That is, a graph $K$ embedded into $S$ to which $S$ is homotopic. We further require that that each $p_i \in \Sigma$ is a univalent vertex of $K$ and that there are no bivalent vertices.
\item[($**$)] A category $\mathscr{A}_v$ for each vertex $v$ of $K$ and a category $\mathscr{C}_e$ for each edge of $K$. If $v$ is not in $\Sigma$ then $\mathscr{A}_v = A_n(\mathscr{C})$, where $n+1$ is the valency of $v$. All $\mathscr{C}_e$ are equivalent to some fixed category $\mathscr{C}$.
\item[($***$)] If $e$ is incident to $v$, then there is a functor $F_{v,e}:\mathscr{A}_v \rightarrow \mathscr{C}$. If $v$ is $(n+1)$-valent, then for some counterclockwise ordering of edges emanating from $v$, denoted $e_1,\dots, e_{n+1}$, we have $F_{e_i,v} =\phi_{e_i,v}\cdot f_i$ for $\phi_{e_i,v}$ some quasiautoequivalence of $\mathscr{C}$. Here $f_i$ are functors defined in the previous section. If $v \in \Sigma$ then $F_{v,e}$ is a spherical functor. 
\end{enumerate}

\begin{remark}\label{rmk:2edgy}
In the case where $e$ has both ends adjacent to the same vertex, we must adjust (3) so that we have one functor $F_{v,e}^1$ and $F_{v,e}^2$ for each adjacency, with corresponding autoequivalences $\phi_{v,e}^1$ and $\phi_{v,e}^2$ of $\mathscr{C}$. 
\end{remark}
\begin{remark}
This definition is essentially a simplified restatement of the characterization of perverse sheaves on $S$ with singularities in $\Sigma$ as in \cite{ks2} with vector spaces replaced by dg categories.
\end{remark}
This allows us to define something resembling a constructible presheaf $\mathscr{S}_k$ of dg categories on $K$ defined in terms of the following sets, which we will describe in the case where $K$ has no loops. If $v \in \mathrm{Vert}(K)$, then let $U_v$ the union of $v$ and the interior of the adjacent $1$-cells. For $e \in \mathrm{Ed}(K)$ let $U_e$ be the interior of the 1-cell corresponding to $e$. To the edges, we assign the categories $\Gamma(\mathscr{S}_K,U_e) = \mathscr{C}$, and to vertices we let $\Gamma(\mathscr{S}_K,U_v) = \mathscr{A}_v$, the functors $F_{v,e}$ determine $\Gamma(\mathscr{S}_K,U_v) \rightarrow \Gamma(\mathscr{S}_K,U_e)$ for $e$ an edge adjacent to $v$.

A $K$-coordinatized perverse sheaf of categories should be thought of as a categorification of a perverse sheaf on $S$ which is a local system when restricted to $S \setminus \Sigma$. Therefore, we should have a $K$-coordinatized local system of categories on the complement of $\Sigma$. We define this as follows. For each $p \in \Sigma$ choose a small open disc centered at $p$ denoted $D_p$, then let $S^\circ = S \setminus \cup_{p\in \Sigma} D_p$. We may construct a skeleton $K^\circ$ of $S^\circ$ as follows. If $e_p$ is the edge adjacent to $p$ and $q$ its other endpoint, and $p'$ is a point in $e_p$ which is not in  $D_p$, then we replace $e_p$ with an edge $e_{p'}$, the portion of $e_p$ between $q$ and $p'$. Then we attach one more edge $e_{F_p}$ to $p'$ with both endpoints at $p'$ and travelling in a small loop around $D_p$. Call the resulting skeleton $K^\circ$. We can equip $K^\circ$ with a $K^\circ$ coordinatized perverse sheaf of categories $\mathscr{S}^\circ$ which has the same data as $\mathscr{S}$ away from the new vertices $p'$. Each new vertex $p'$ is trivalent, and therefore, we must define three functors from $A_2(\mathscr{C})$ to $\mathscr{C}$. Let $F_{p',e_{p'}} = f_3$, let $F_{p',e_{F_p}}^1 = T_{F_p}\cdot f_1$ be the functor corresponding to the adjacency between $p'$ and $e_{F_p}$ to the immediate right of $e_{p'}$. Let $F_{p',e_{F_p}}^2 = f_2$ be the functor corresponding to the second adjacency between $p'$ and $e_{F_p}$. Here $T_{F_p}$ is the spherical twist associated to $F_p$.

A cycle $C$ is an ordered sequence of vertices $v_1,\dots, v_n$ with $v_n = v_1$ and an edge $e_{i,i+1}$ adjacent to both of $v_i$ and $v_{i+1}$. To any cycle in $K^\circ$, we define the monodromy around $C$ to be
\begin{equation}\label{eq:monodromy}
\mathsf{Mon}(C) = \phi_{v_{n},e_{n-1}}^{-1} \cdot \phi_{v_{n-1},e_{n-1}}\cdot \dots \cdot \phi_{v_2,e_1}^{-1} \cdot \phi_{v_1,e_1}
\end{equation}
which is a quasiautoequivalence of $\mathscr{C}$. Each cycle starting at a chosen vertex $v_1$ induces an element of $\pi_1(S^\circ,v_1)$, and since $S^\circ$ is homotopic to $K^\circ$, we see that every class in $\pi_1(S^\circ,v_1)$ can be represented this way. 
\begin{defn}
A \emph{$K$-coordinatized perverse sheaf of categories} on $(S,\Sigma)$ is a choice of a skeleton $K$ of $S$, a pretriangulated dg category $\mathscr{C},$ a collection of pretriangulated dg categories $\mathscr{A}_v$ for all $v \in \mathrm{Vert}(K)$ and functors $F_v : \mathscr{A}_v \rightarrow \mathscr{C}$ for each $v \in \mathrm{Vert}(K)$ satisfying the conditions given in ($*$), ($**$) and ($***$), so that $\mathsf{Mon}$ induces a representation of $ \pi_1(S^\circ,v_1)$ in the group of autoequivalences of $\HH^0\mathscr{C}$.
\end{defn}
\begin{ex}
A $K$-coordinatized perverse schober gives rise to a $K$-coordinatized perverse sheaf of categories whose autoequivalences $\phi_{v,e}$ are all trivial. The monodromy around any cycle is a product of spherical twists. 
\end{ex}
There is a diagram of categories associated to $\mathscr{S}_K$, given by the categories $\mathscr{A}_v$ for all $v \in \mathrm{Vert}(K)$ and $\mathscr{C}_e$ for $e \in \mathrm{Edge}(K)$ and the functors $F_{v,e}$ between them. The homotopy limit of this diagram is denoted $\Gamma\mathscr{S}_K$, and called the category of global sections of $\mathscr{S}_K$.
\begin{remark}
Such a structure should arise from a symplectic fibration over $S$ with smooth fibers away from $\Sigma$, and its category of global sections should be the partially wrapped Fukaya category of $S$. We allow that there be an indeterminate number of edges emanating towards each boundary component of $S$, which corresponds to allowing more general partial wrappings in the corresponding Fukaya category. 
\end{remark}

\section{Sheaves of categories on the cylinder, $\mathbb{P}^1$ bundles and a result of Segal}\label{sect:recon}

The goal of this section is to give two examples where perverse sheaves of categories appear naturally. First, we will show that a construction of Segal \cite{seg} is related to certain perverse sheaves of categories. Next we show that if $X$ is a smooth variety and $\mathscr{L}$ is a line bundle on $X$, then perverse sheaves of categories capture the derived category of coherent sheaves on $\DD^b_\mathrm{dg}(\mathbb{P}_X(\mathscr{O}_X \oplus \mathscr{L}))$. In both cases, we use perverse sheaves of categories on the cylinder $S^1 \times [0,1]$.

\subsection{Perverse sheaves of categories and spherical functors}\label{sect:Segal}

In \cite{seg}, Segal shows that if $\Phi$ is an autoequivalence of a triangulated category $\mathscr{D}$ which admits a dg enhancement $\mathscr{D}_\mathrm{dg}$ then there is a triangulated category $\overline{\mathscr{D}_\Phi}$ and a functor $j^* : \mathscr{D} \rightarrow \overline{\mathscr{D}_\Phi}$ which is spherical and whose cotwist is the autoequivalence $\Phi$. In this section, we will show how this construction can be recast in terms of perverse sheaves of categories.

The first statement that we must prove is the following proposition.
\begin{proposition}\label{prop:istriang}
If $\mathscr{A}$ and $\mathscr{C}$ are pretriangulated dg categories and $F_1,F_2 :\mathscr{A} \rightarrow \mathscr{C}$ are functors, then the homotopy equalizer $\mathscr{E}\mathrm{q}^h(F_1,F_2)$ is pretriangulated.
\end{proposition}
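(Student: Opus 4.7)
The plan is to mimic the Kuznetsov--Lunts argument (Proposition~2.8 above), constructing explicit shifts and cones directly in $\mathscr{E}\mathrm{q}^h(F_1,F_2)$ using the explicit description of morphism complexes. First, for the shift of an object $(a,\mu)$, I would take $(a,\mu)[1] := (a[1],\mu[1])$, where $a[1]$ exists in $\mathscr{A}$ (since $\mathscr{A}$ is pretriangulated) and $\mu[1]\colon F_1(a[1])\to F_2(a[1])$ is induced by $\mu$ through the canonical quasi-isomorphisms $F_i(a[1])\simeq F_i(a)[1]$ in $\mathscr{C}$, which exist because $F_i$ is a dg functor and $\mathscr{C}$ is pretriangulated. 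Clearly $[\mu[1]]$ is an isomorphism iff $[\mu]$ is, so the new pair is an object of $\mathscr{E}\mathrm{q}^h(F_1,F_2)$, and a direct comparison using the explicit $\hom$-complex shows $\mathsf{Y}^{(a,\mu)}[1]$ is quasi-represented by $(a[1],\mu[1])$.

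For cones, let $(f,h)\colon (a,\mu)\to (b,\xi)$ be closed of degree $0$, so that $df=0$ and $dh = F_2(f)\mu - \xi F_1(f)$. Set $c := \mathrm{Cone}(f) = (a[1]\oplus b,f)$ in $\Tw\mathscr{A}$, so that $F_i(c) = (F_i(a)[1]\oplus F_i(b), F_i(f))$ in $\Tw\mathscr{C}$. I would then define $\nu\colon F_1(c)\to F_2(c)$ as the matrix
$$
\nu = \begin{pmatrix} \mu & 0 \\ -h & \xi \end{pmatrix}.
$$
The Maurer--Cartan-corrected closedness equation $d\nu + F_2(f)\nu - \nu F_1(f) = 0$ reads, in the $(2,1)$-entry, exactly $dh = F_2(f)\mu - \xi F_1(f)$, which is the closedness of $(f,h)$; the other entries are automatic from $d\mu = d\xi = 0$. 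Hence $\nu$ is a closed degree $0$ morphism, and by the five-lemma applied to the morphism of distinguished triangles $F_1(a)\to F_1(b)\to F_1(c)\to F_1(a)[1]$ and its $F_2$ counterpart with vertical maps $\mu,\xi,\nu,\mu[1]$, the homotopy class $[\nu]$ is an isomorphism. So $(c,\nu)$ is a bona fide object of $\mathscr{E}\mathrm{q}^h(F_1,F_2)$.

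Finally I would verify that $(c,\nu)$ represents the cone of $(f,h)$ in the module category. The natural morphisms $(b,\xi)\to (c,\nu)$ and $(c,\nu)\to (a,\mu)[1]$ lift those appearing in $\Tw\mathscr{A}$, and for any test object $(x,\tau)$ one unwinds the three-component morphism complexes to match
$$
\hom_{\mathscr{E}\mathrm{q}^h}((x,\tau),(c,\nu)) \simeq \mathrm{Cone}\bigl(\hom_{\mathscr{E}\mathrm{q}^h}((x,\tau),(a,\mu)) \xrightarrow{(f,h)_*} \hom_{\mathscr{E}\mathrm{q}^h}((x,\tau),(b,\xi))\bigr)
$$
by reducing to the corresponding statements for $\hom_\mathscr{A}(x,-)$ and $\hom_\mathscr{C}(F_i(x),F_j(-))$, which hold because $\mathscr{A}$ and $\mathscr{C}$ are pretriangulated. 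The main obstacle is this last bookkeeping step, which is technical but follows the pattern of the proof of \cite[Proposition~4.3]{kl}; essentially one uses that a cone in a homotopy equalizer is computed componentwise plus a homotopy correction, and that taking $\hom$ out of a fixed object preserves this structure.
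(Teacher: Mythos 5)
Your proposal is correct and follows essentially the same strategy as the paper's proof: both adapt Kuznetsov--Lunts \cite[Proposition 4.3]{kl}, handle the shift by passing to $a[1]$ in $\mathscr{A}$, and represent the cone of $(f,h)$ as $\mathrm{Cone}(f)$ in $\Tw\mathscr{A}$ paired with an explicit degree-zero morphism $F_1(\Cone(f))\to F_2(\Cone(f))$ whose closedness is exactly the closedness of $(f,h)$ and whose invertibility in $\HH^0\mathscr{C}$ follows from the five lemma applied to the induced map of triangles. The only cosmetic difference is that you express this morphism as the $2\times 2$ matrix $\begin{pmatrix} \mu & 0 \\ -h & \xi \end{pmatrix}$ while the paper writes it as $F_2(i)\mu F_1(p)+F_2(j)\xi F_1(s)+F_2(j)f'F_1(p)$ using the inclusion/projection data $i,p,j,s$ of the cone from \cite[Remark 3.1]{kl}; unwinding those structural maps gives precisely your matrix (up to an immaterial sign convention in the off-diagonal entry), and both treatments likewise defer the final bookkeeping that $(c,\nu)$ quasi-represents the cone module to the Kuznetsov--Lunts argument.
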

\begin{proof}
Recall that objects of the homotopy equalizer of a dg category can be represented by pairs $(a,\mu)$ where $a \in \mathscr{A}$ and $\mu : F_1(a) \rightarrow F_2(a)$ is degree 0 closed morphism whose induced morphism in $\HH^0 \mathscr{C}$ is an isomorphism. Our proof follows that of \cite[Proposition 4.3]{kl}.

If $a[1]$ is an element of $\mathscr{A}$ isomorphic to $\mathsf{Y}^a[1]$ then it is clear that $(a[1],\mu)$ is isomorphic to $\mathsf{Y}^{(a,\mu)}[1]$. Hence the shift functor makes sense. Morphisms between $(a,\mu)$ and $(b,\xi)$ in $\mathscr{E}\mathrm{q}^h(F_1,F_2)$ of degree $i$ are pairs of objects $(f,f') \in \hom^i(a,b) \oplus \hom^{i-1}(F_1(a),F_2(b))$, and $(f,f')$ is closed if and only if $df = 0$ and
\[
df' = \xi \cdot F_1(f) + (-1)^i F_2(f) \cdot \mu.
\]
Let $(f,f')$ be a closed degree 0 homomorphism from $(a,\mu)$ to $(b,\xi)$. Then we claim that the cone of $(f,f')$ is represented by the pair $c = \cone(f)$ along with a morphism that we construct as follows. According to \cite[Remark 3.1]{kl} we have the following morphisms and identities
\[
a[1] \xrightarrow{i} c \xrightarrow{p} a[1], \qquad b \xrightarrow{q} c \xrightarrow{s} b
\]
satisfying the conditions
\[
pi = \mathrm{id}_{a[1]}, \quad sq = \mathrm{id}_b, \quad ip+qs = \mathrm{id}_c,\quad dq = dp = 0, \quad di = qf, \quad ds = -fp.
\]
We define $\gamma$ to be
\[
\gamma = F_2(i) \mu F_1(p) + F_2(q) \xi F_1(s) + F_2(q) f' F_1(p)
\]
By construction, the diagram
\[
\begin{CD}
F_1(a) @>F_1(f)>> F_1(b) @>>> F_1(c) @>>> F_1(a[1]) \\
@V\mu VV @V\xi VV @V\gamma VV @V\mu[1] VV \\
F_1(a) @>F_2(f)>> F_2(b) @>>> F_2(c) @>>> F_2(a[1])
\end{CD}
\]
commutes up to homotopy in each square. Following the computations in \cite[Proposition 4.3]{kl}, this is closed and is of degree 0. The corresponding diagram in the homotopy category has rows which are distinguished triangles, the squares commute, and the first two vertical arrows and the last arrow are isomorphisms. Therefore $[\gamma]$ is an isomorphism. One then uses \cite[Remark 3.1]{kl} to see that $(c,\gamma)$ represents the cone of $(f,f')$ in $\mathscr{E}\mathrm{q}^h(F_1,F_2)$.
\end{proof}
\begin{remark}
This also implies that the homotopy fiber product of pretriangulated dg categories is again pretriangulated, since the homotopy fiber product is expressed as a homotopy equalizer.
\end{remark}

We will now explain Segal's construction. Segal \cite{seg} begins with a triangulated category $\mathscr{D}$ and an autoequivalence $\Phi$ of $\mathscr{D}$. He then constructs a category $\mathscr{D}_\Phi$ which is generated by objects $j^*a$ for all $a \in \mathscr{D}$, which satisfy
\[
\hom_{\mathscr{D}_\Phi}(j^*a,j^*b) = \hom_\mathscr{D}(x,y) \oplus \hom_\mathscr{D}(x, \Phi^{-1}y[1]).
\]
If $(f,f') \in \hom_{\mathscr{D}_\Phi}(j^*a,j^*b)$ and $(g,g') \in \hom_{\mathscr{D}_\Phi}(j^*b,j^*c)$, then
\[
(g,g') \cdot (f,f') = (g\cdot f, g' \cdot f + \Phi^{-1}(g) \cdot f').
\]
Segal shows that $j$ has right and left adjoints $j_*$ and $j^!$ respectively, is spherical, and that its spherical twist functor is $\Phi$. Our goal in this section is to show that if $\mathscr{D}$ has a dg extension $\mathscr{D}_\mathrm{dg}$, then there is a perverse sheaf of categories $\mathscr{S}_p(\mathscr{D}_\mathrm{dg},\Phi)$ whose category of global sections contains a dg extension of $\mathscr{D}_\Phi$ as a full subcategory. For simplicity, we will assume that $\Phi$ is an autoequivalence of $\mathscr{D}_{\dg}$ with inverse $\Phi^{-1}$.

Let $K_p$ be the skeleton of $S^1 \times [0,1]$ depicted in Figure \ref{fig:Sp}, that is, a graph with one trivalent vertex $v$ and two edges $e_c$ and $e_o$. The edge $e_c$ has both ends equal to $v$ and $e_o$ has one end connected to $v$ and the other is contained in one boundary component of $S \times [0,1]$. 
We let $\mathscr{S}_p(\mathscr{D}_\mathrm{dg},\Phi)$ be the perverse sheaf of categories on $K_p$ so that $\mathscr{A}_v = A_2(\mathscr{D}_\mathrm{dg}), \mathscr{C}_{e_v} = \mathscr{C}_{e_o} = \mathscr{D}_\mathrm{dg}, F_{v,e_c}^1 = f_1, F_{v,e_c}^2 = \Phi \cdot f_2$ (see Remark \ref{rmk:2edgy}) and so that $F_{v,e_o} = f_3$. This means that the category of global sections $\Gamma \mathscr{S}_p(\mathscr{D}_\mathrm{dg},\Phi)$ is the category $\mathscr{E}\mathrm{q}^h(f_1,\Phi\cdot f_2)$ where,
\[
f_1: A_2(\mathscr{D}_\mathrm{dg}) \rightarrow \mathscr{D}_\mathrm{dg}, \quad (a,b,\mu) \mapsto a, \qquad \Phi\cdot f_2 : A_2(\mathscr{D}_\mathrm{dg}) \rightarrow \mathscr{D}_\mathrm{dg}, \quad (a,b,\mu) \mapsto \Phi(b).
\]
There is a full subcategory of $\mathscr{E}\mathrm{q}^h(f_1,\Phi\cdot f_2)$ made up of objects,
\[
\{((a,\Phi^{-1}(a),0),\mathrm{id}_{a}) : a \in \mathscr{D}_\mathrm{dg}\}
\]
We will denote this category $\mathscr{D}_{\Phi,\mathrm{dg}}$.

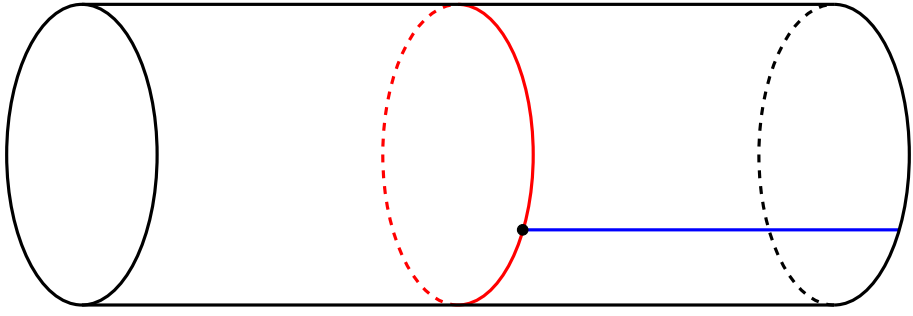
\begin{figure}
    \begin{tikzpicture}
\draw[fill = black] (0.86,-1) circle (1pt);

\draw[blue,very thick] (0.86,-1) to (5.86,-1);

	\draw[very thick] (5,-2) arc (270:450:1cm and 2cm);
    \draw[dashed,very thick] (5,-2) arc (270:90:1cm and 2cm);
	\draw[very thick] (-5,0) ellipse (1cm and 2cm);

\draw[red,very thick] (0,-2) arc (270:450:1cm and 2cm);
\draw[dashed,red, very thick] (0,-2) arc (270:90:1cm and 2cm);
\draw[very thick] (-5,2) to (5,2);
\draw[very thick] (-5,-2) to (5,-2);

\draw[fill = black] (0.86,-1) circle (2pt);

\end{tikzpicture}\caption{\label{fig:Sp} The skeleton $K_p$.}
\end{figure}

\begin{proposition}
The category $\mathscr{D}_{\Phi,\mathrm{dg}}$ is a dg enhancement of $\mathscr{D}_\Phi$.
\end{proposition}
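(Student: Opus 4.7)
The plan is to verify that the natural inclusion induces an equivalence of categories $H^0\mathscr{D}_{\Phi,\mathrm{dg}} \to \mathscr{D}_\Phi$, by checking a bijection on isomorphism classes of objects and an isomorphism of hom-sets, compatibly with composition. Since $\mathscr{D}_{\Phi,\mathrm{dg}}$ is by definition the full subcategory with objects $X_a := ((a,\Phi^{-1}a,0),\mathrm{id}_a)$ for $a \in \mathscr{D}$, the assignment $X_a \mapsto j^*a$ gives the bijection on objects.

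For the morphism spaces, I would unwind Definition \ref{def:bbb} applied to $A_2(\mathscr{D}) = \mathscr{D}\times_{\mathrm{id},\mathrm{id}}\mathscr{D}$, combined with the homotopy equalizer definition. Since $\mu = \xi = 0$ in $A_2(\mathscr{D})$ and $\nu_a = \mathrm{id}_a$, $\nu_b = \mathrm{id}_b$ in the equalizer, a degree-$i$ morphism $X_a \to X_b$ is a quadruple
\[
(f_1,f_2,h,f') \in \hom^i(a,b) \oplus \hom^i(\Phi^{-1}a,\Phi^{-1}b) \oplus \hom^{i-1}(a,\Phi^{-1}b) \oplus \hom^{i-1}(a,b),
\]
whose only cross-coupling in the differential is the term $df' \pm (f_1 - \Phi(f_2))$ in the last slot. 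Using that $\Phi$ is a strict autoequivalence, I would set $\tilde f_2 := \Phi(f_2)$ and switch to variables $(u,v,h,f')$ with $u = f_1 - \tilde f_2$ and $v = \tilde f_2$. The pair $(u,f')$ then spans an acyclic subcomplex, namely a mapping cone on an identity of $\hom(a,b)$. Quotienting this out delivers a quasi-isomorphism to the split complex $\hom_\mathscr{D}(a,b) \oplus \hom_\mathscr{D}(a,\Phi^{-1}b)[1]$, whose $H^0$ is Segal's $\hom_\mathscr{D}(a,b) \oplus \hom_\mathscr{D}(a,\Phi^{-1}b[1])$.

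For composition, I would compose two quadruples using the composition rules in $A_2(\mathscr{D})$ and in the equalizer, and then transport the result through the retraction. On canonical representatives with $u = 0$ and $h = 0$ (so $f_1 = \tilde f_2$, and $f'$ the surviving homotopy), the composition reduces, after tracking how the $\Phi^{-1}$-coordinate in $A_2(\mathscr{D})$ supplies a twist, to $(g_1 f_1, \; g' \cdot f_1 + \Phi^{-1}(g_1)\cdot f')$, which is precisely Segal's formula $(g,g')\cdot(f,f') = (gf, \; g'f + \Phi^{-1}(g)f')$.

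The principal obstacle is the bookkeeping of signs and shift conventions. In particular, one must align the paper's cone convention $\cone(\mu) = a \oplus b[-1]$ with Segal's shift $[1]$ appearing in $\Phi^{-1}b[1]$, and identify which of the two secondary fields — the $h$ from $A_2(\mathscr{D})$ or the $f'$ from the equalizer — survives in cohomology as Segal's second coordinate, with the other being absorbed into the acyclic gauge piece. The composition step additionally requires verifying that the twist built into composition in the $\Phi^{-1}$-coordinate of $A_2(\mathscr{D})$ is exactly the $\Phi^{-1}$-factor appearing in Segal's mixing term.
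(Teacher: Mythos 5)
Your proposal is correct and follows essentially the same route as the paper's proof: unwind the hom-complex between $((a,\Phi^{-1}a,0),\mathrm{id}_a)$ and $((b,\Phi^{-1}b,0),\mathrm{id}_b)$ as a quadruple, identify the acyclic piece, and check that the surviving data and its composition law reproduce Segal's formula. The only organizational difference is that you phrase the reduction as a change of variables splitting off an acyclic direct summand, whereas the paper normalizes a closed degree-zero morphism by adding the exact term $d((0,\Phi^{-1}(u),0),0)$ to land in the form $((s,\Phi^{-1}(s),t),0)$ — two descriptions of the same retraction.
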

\begin{proof}
In this proof, for simplicity, we will write $\mathscr{D}$ in place of $\mathscr{D}_\mathrm{dg}$. Let $\alpha = ((a,\Phi^{-1}(a),0),\mathrm{id}_a)$ and $\beta= ((b,\Phi^{-1}(b),0),\mathrm{id}_b)$. An element of $\hom_{\mathscr{E}\mathrm{q}^h(f_1,\Phi\cdot f_2)}(\alpha,\beta)$ is written as $f = ((s,r,t),u)$ where $s \in \hom_\mathscr{D}^i(a,b), r\in \hom_\mathscr{D}^i(\Phi^{-1}(a),\Phi^{-1}(b)), t \in \hom^{i-1}_{\mathscr{D}}(a, \Phi^{-1}(b))$ and $u \in \hom^{i-1}_\mathscr{D}(a,b[1]) = \hom^i_\mathscr{D}(a,b)$. We check that
\[
d((s,t,r),u) = ((ds,dt,dr),du - (s - \Phi(t)))
\]
So if $f$ is closed of degree $0$ then $s,r$ are closed, $t$ is closed and $du = s - \Phi(r)$. This means that $s$ and $\Phi(r)$ are homtopy equivalent, hence $r$ and $\Phi^{-1}(s)$ are homotopy equivalent, so that $d \Phi^{-1}(u) = \Phi^{-1}(s) - r$. Now we will check that
\begin{align*}
d((0,\Phi^{-1}(u),0),0) &= ((0 ,\Phi^{-1}(s) - r,0), -u) \\ &=( (s,\Phi^{-1}(s),t),0) - ((s, r, t),u).
\end{align*}
Therefore, every closed morphism is, up to sign, quasiequivalent to a morphism of the form $((s,\Phi^{-1}(s),t),0)$. Such morphisms are automatically closed since $s$ and $t$ are closed and are exact if and only if $s$ and $t$ are exact. Therefore, $\HH^0\mathscr{D}_{\Phi,\mathrm{dg}}$ has morphisms represented by closed degree 0 morphisms $s : a \rightarrow b$, and $r:a \rightarrow \Phi^{-1}(b)[1]$. Composition can be computed inside of $\mathscr{D}_{\Phi,\mathrm{dg}}$ to be;
\[
((s,\Phi^{-1}(s),t),0)\cdot((r,\Phi^{-1}(r),q),0) = ((s\cdot r, \Phi^{-1}(s\cdot t), r \cdot t + \Phi^{-1}(s) \cdot q), 0).
\]
Therefore composition of morphisms agrees with that of $\mathscr{D}_\Phi$ so $\mathscr{D}_{\Phi,\mathrm{dg}}$ is a dg enhancement of $\mathscr{D}_\Phi$.
\end{proof}
%Next we will show that any element of $\mathscr{E}\mathrm{q}^h(f_1,\Phi\cdot f_2)$ is homotopic to a direct summand of the cone of a pair of elements in $\mathscr{D}_{\Phi,\mathrm{dg}}$.
%\begin{proposition}\label{prop:seg}
%If $\delta = ((a,b,\mu),\tau)$ is an element of $\mathscr{E}\mathrm{q}^h(f_1,\Phi\cdot f_2)$, then $\delta$ is homotopic to a direct summand of a the cone of a morphism of a pair of elements in $\mathscr{D}_{\Phi,\mathrm{dg}}$. Therefore, $ \DD^\pi \mathscr{E}\mathrm{q}^h(f_1,\Phi\cdot f_2)$ is equivalent to the split closure of $\mathscr{D}_\Phi$.
%\end{proposition}
%\begin{proof}
%Following the proof of Lemma \ref{lemma:isomrec}, one sees that there is a triple $(a,\Phi^{-1}(a),\mu'),\mathrm{id}_a)$ in $\mathscr{E}\mathrm{q}^h(f_1,\Phi\cdot f_2)$ which is quasiequivalent to $\delta$. Now one takes the cone of
%\[
%((a,\Phi^{-1}(a), 0),\mathrm{id}_a) \xrightarrow{((0,0,\mu'),0)} ((a,\Phi^{-1}(a),0),\mathrm{id}_a)
%\]
%One can check using the description of cones in the proof of Proposition \ref{prop:istriang} that this cone is homotopy equivalent to
%\[
%((a\oplus a, \Phi^{-1}(a) \oplus \Phi^{-1}(a), \mu \oplus 0), \mathrm{id}_a \oplus \mathrm{id}_{a})
%\]
%which is isomorphic to
%\[
%((a, \Phi^{-1}(a), \mu), \mathrm{id}_a) \oplus ((a, \Phi^{-1}(a),0),\mathrm{id}_a).
%\]
%Therefore, we have that $\mathscr{D}_{\Phi,\mathrm{dg}}$ must split generate $\Pi \mathscr{E}\mathrm{q}^h(f_1,\Phi\cdot f_2)$, so by \cite[Corollary 4.9]{seidbook}, we have $\Pi\Tw \mathscr{D}_{\Phi,\mathrm{dg}}$ is quasiequivalent to $\Pi \mathscr{E}\mathrm{q}^h(f_1,\Phi\cdot f_2)$.
%\end{proof}

Two of Segal's functors then have natural interpretations in terms of our construction. The functor $j^*$ is comes from the functor
\[
a \mapsto ((a,\Phi^{-1}(a),0),\mathrm{id}_a) \in \mathscr{D}_{\Phi,\mathrm{dg}},
\]
and its right adjoint $j_*$ comes from the functor inherited from $f_3$ from $A_2(\mathscr{D}_{\mathrm{dg}})$ to $\Tw\mathscr{D}_\mathrm{dg}$, sending
\[
((a,b,\mu),\tau) \mapsto \Cone(\mu)
\]
This functor should be thought of as the pullback to a fiber functor, from the category of global sections of $\mathscr{S}_p(\mathscr{D}_\mathrm{dg},\Phi)$ to the stalk along a fiber of the edge of $K_p$ which intersects a boundary component of $S \times [0,1]$.

\subsection{$\mathbb{P}^1$ bundles and sheaves of categories}\label{sect:Pbub}

In this section, we will deal with the bounded derived category of coherent sheaves on varieties of the form $\mathbb{P}_X(\mathscr{O}_X\oplus \mathscr{L})$ for some line bundle $\mathscr{L}$ on a variety $X$. It is well known \cite{stz ,sibilla, kontsevich} that if $X = \mathrm{pt}$ then there is a perverse sheaf of categories on the skeleton in Figure \ref{fig:Sphi} whose category of global sections is $\DD^b(\mathbb{P}^1)$.
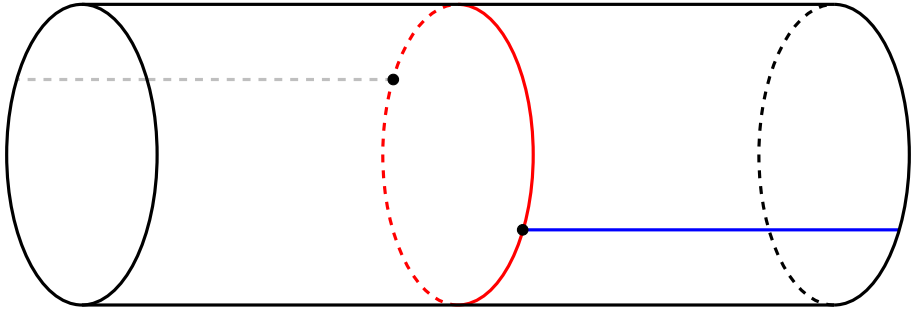
\begin{figure}
    \begin{tikzpicture}

\draw[fill = black] (0.86,-1) circle (1pt);
\draw[fill = black] (-0.86,1) circle (1pt);

\draw[blue,very thick] (0.86,-1) to (5.86,-1);
\draw[lightgray,dashed,very thick] (-0.86,1) to (-4.14,1);
\draw[lightgray, very thick,dashed] (-4.14,1) to (-5.86,1);

	\draw[very thick] (5,-2) arc (270:450:1cm and 2cm);
    \draw[dashed,very thick] (5,-2) arc (270:90:1cm and 2cm);
	\draw[very thick] (-5,0) ellipse (1cm and 2cm);

\draw[red,very thick] (0,-2) arc (270:450:1cm and 2cm);
\draw[dashed,red, very thick] (0,-2) arc (270:90:1cm and 2cm);
\draw[very thick] (-5,2) to (5,2);
\draw[very thick] (-5,-2) to (5,-2);

\draw[fill = black] (0.86,-1) circle (2pt);
\draw[fill = black] (-0.86,1) circle (2pt);
\end{tikzpicture}\caption{\label{fig:Sphi}The skeleton $K_\phi$.}
\end{figure}
We place the category $A_2(\Perf_k)$ at the two vertices, and the category $\Perf_k$ at each point along an edge. The category $\Gamma\mathscr{S}_\phi$ is the homotopy fiber product $A_2(\Perf_k) \times^h_{\Perf_k \times \Perf_k} A_2(\Perf_k)$ taken over the functors $f_1 \times f_2$ for both copies of $A_2(\Perf_k)$.

The idea is now that we let a pretriangulated dg category $\mathscr{C}$ be the general fiber of our perverse sheaf of categories on the same complex $K_\phi$, but this time, we will assume that there is monodromy around the central $S^1$. This is encoded by defining the transition functor along one of the edges to be an auto equivalence $\Phi^{-1}$ of $\mathscr{C}$. We have four edges $g_1,g_2$ and $h_1,h_2$ and two vertices $v_1,v_2$. The edge $g_1$ is adjacent to $v_1$ and has its second vertex in one of the boundary components. Similarly, $g_2$ is adjacent to $v_2$ and has its second vertex in the other boundary component. The edges $f_1$ and $f_2$ are adjacent to both $v_1$ and $v_2$. Both vertices are trivalent, so $\mathscr{A}_{v_1} = \mathscr{A}_{v_2} = A_2(\mathscr{C})$ and we let $F_{v_1,g_1} = F_{v_2,g_2} = f_3$, $F_{v_1,h_1} = F_{v_2,h_1} = f_1$ and $F_{v_1,h_2} = \Phi\cdot f_2$ and $F_{v_2,h_2} = f_2$. Call the resulting sheaf of categories $\mathscr{S}_\phi(\mathscr{C},\Phi)$. The category of global sections of $\mathscr{S}_\phi(\mathscr{C},\Phi)$ is the homotopy fiber product of
\[
 A_2(\mathscr{C}) \xrightarrow{f_1 \times \Phi\cdot f_2} \mathscr{C} \times \mathscr{C} \xleftarrow{f_1 \times f_2} A_2(\mathscr{C}).
\]
There are two natural subsets of the set of objects of $\Gamma\mathscr{S}_\phi(\mathscr{C},\Phi)$ written as
\[
S_1= \{ ((a,0,0), (a,0,0),\mathrm{id}_a) : a \in \mathscr{C}\},\quad  S_2=\{ ((0,b,0),(0,\Phi(b),0),\mathrm{id}_b) : b \in \mathscr{C}\}
\]
We see that if $A,A' \in S_1$ then $\hom_{\Gamma\mathscr{S}_\phi(\mathscr{C},\Phi)}(A,A')$ is the cone of the map
\begin{equation}\label{eq:conn}
\hom_{A_2(\mathscr{C})^2}((a,0,0) \times (a,0,0),(a',0,0)\times (a',0,0))  \longrightarrow \hom_{(\Tw \mathscr{C})^2}(a\times 0, a'\times 0 ).
\end{equation}
Applying definitions, we see that it is equivalent to the cone of
\[
\hom_{\mathscr{C}}(a,a') \oplus \hom_{\mathscr{C}}(a,a') \xrightarrow{(\mathrm{id} , -\mathrm{id})^T} \hom_{\mathscr{C}}(a,a').
\]
This cone is quasiisomorphic as a complex to the diagonal subcomplex of $\hom_\mathscr{C}(a,a')\oplus \hom_\mathscr{C}(a,a')$. Let us let $\mathscr{C}_1'$ be the subcategory of $\Gamma\mathscr{S}_\phi(\mathscr{C},\Phi)$ whose objects are $S_1$ and whose morphisms are given by the diagonal subcomplex. It is easy to see that this indeed forms a subcategory. Define similarly the subcategory $\mathscr{C}_2'$ of $\Gamma\mathscr{S}_\phi(\mathscr{C},\Phi)$. Let $\mathscr{C}_1$ and $\mathscr{C}_2$ be the full subcategories of $\Gamma\mathscr{S}_\phi(\mathscr{C},\Phi)$ on objects $S_1$ and $S_2$. Clearly, $\mathscr{C}_i$ is quasiequivalent to $\mathscr{C}_i'$ for $i = 1,2$.
\begin{proposition}\label{prop:ncP1}
Let $A = ((a,0,0),(a,0,0),\mathrm{id}_a) \in \mathrm{Ob}(\mathscr{C}_1)$ and $B = ((0,b,0),(0,b,0),\mathrm{id}_b) \in \mathrm{Ob}(\mathscr{C}_2)$. The following statements are true.
\begin{enumerate}
\item $\hom_{\Gamma\mathscr{S}_\phi(\mathscr{C},\Phi)}(B,A) = 0$.
\item $\hom_{\Gamma\mathscr{S}_\phi(\mathscr{C},\Phi)}(A,B)$ is isomorphic to $\hom_{\mathscr{C}}(a,b\oplus \Phi(b))$.
\item Every element of $\Gamma\mathscr{S}_\phi(\mathscr{C},\Phi)$ is quasiisomorphic to $\cone(f)$ for $A \in \mathrm{Ob}(\mathscr{C}_{1})$ and $B \in \mathrm{Ob}(\mathscr{C}_{2})$ and $f$ some closed element of $\hom^0_{\Gamma\mathscr{S}_\phi(\mathscr{C},\Phi)}(A,B)$.
\end{enumerate}
Therefore, if $\mathsf{G}$ is the $\mathscr{C}$-$\mathscr{C}$ bimodule sending $b \otimes a$ to $\hom_\mathscr{C}(a,b \oplus \Phi(b))$, there is a quasiequivalence between $\mathscr{C} \times_\mathsf{G}\mathscr{C}$ and $\Gamma\mathscr{S}_\phi(\mathscr{C},\Phi)$.
\end{proposition}
\begin{proof}
Seeing that condition (1) of the proposition holds is a direct check. To check condition (2), we suppose that $A \in \mathscr{C}_{1}$ and $B \in \mathscr{C}_{2}$. Then we see that
\begin{align*}
\hom_{\Gamma\mathscr{S}_\phi(\mathscr{C},\Phi)}(A,B) &= \cone(\hom_{\mathscr{C}}(a,b) \oplus \hom_{\mathscr{C}}(a,\Phi(b)) \xrightarrow{} \hom_{\mathscr{C}}(a,0) \oplus \hom_{\mathscr{C}}(0,b))\\
& = \hom_{\mathscr{C}}(a, b \oplus \Phi(b)).
\end{align*}
Therefore, we have a quasifull and faithful embedding of $\mathscr{C} \sqcup_{\mathsf{G}} \mathscr{C}$ into $\Gamma\mathscr{S}_\phi(\mathscr{C},\Phi)$ where $\mathsf{G}$ is the $\mathscr{C} \times \mathscr{C}$ bimodule sending $(a,b)$ to $\hom_{\mathscr{C}}(a,b \oplus \Phi(b))$.

If we can show that (3) holds, then we know that $\Tw(\mathscr{C} \sqcup_\mathsf{G} \mathscr{C})$ embeds fully and faithfully into $\Tw \Gamma\mathscr{S}_\phi(\mathscr{C},\Phi)$, which is quasiequivalent to $\Gamma\mathscr{S}_\phi(\mathscr{C},\Phi)$. Therefore, (3) implies the final statement of the proposition. Let us take then an object in $\Gamma \mathscr{S}_\phi(\mathscr{C},\Phi)$
\[
((a,b,\mu), (a',b',\mu'), \xi_a,\xi_b)
\]
where $\xi_a : a \rightarrow a'$ and $\xi_b: \Phi(b) \rightarrow b'$ are quasiisomorphisms in $\mathscr{C}$ and $(a,b,\mu)$ and $(a',b',\mu')$ are objects in $A_2(\mathscr{C})$. We may construct a homotopy commutative diagram
\[
\begin{CD}
a @>\tau>> \Phi(b) \\
@V\xi_aVV @A\xi_b'AA\\
a' @>\mu'>> b'
\end{CD}
\]
where $\xi_b'$ is a lift of $[\xi_b]^{-1} \in \mathrm{H}^0\hom_{[\mathscr{C}]}([b'],[\Phi(b)])$ to $\hom_\mathscr{C}(b',\Phi(b))$. Therefore, $(a,\Phi(b),\tau)$ is quasiisomorphic to $(a',b',\mu')$ by \cite[Lemma 4.2]{bbb}. Now we have that $(\mu,\tau,0)$ is a closed degree 0 morphism in $\hom_{\Gamma \mathscr{S}_\phi (\mathscr{C},T)}(A,B)$ where $A = ((a,0,0),(a,0,0),\mathrm{id})$ and $B = ((0,b,0),(0,\Phi(b),0),\mathrm{id})$. Following the construction of cones in \cite[Lemma 4.3]{kl}, the cone of $(\mu,\tau,0)$ is represented by the object $((a,b,\mu),(a,\Phi(b),\tau),\mathrm{id}_a,\mathrm{id}_b)$. Thus (3) follows and hence we have that $\mathscr{C}_1$ and $\mathscr{C}_2$ form a semiorthogonal decomposition of $\Gamma \mathscr{S}_\phi(\mathscr{C},\Phi)$.
\end{proof}
If $a,b\in \mathrm{Ob}(\mathscr{C})$ and $\mu = \mu_1\oplus \mu_2 \in \hom_{\mathscr{C}}(a,b) \oplus \hom_\mathscr{C}(a,\Phi(b))$ then the quasiequivalence mentioned in the statement of Proposition \ref{prop:ncP1} is given by 
\[
\Xi: (a,b,\mu) \mapsto ((a,b,\mu_1),(a,\Phi(b),\mu_2),\mathrm{id}_a,\mathrm{id}_{\Phi(b)})
\]
We wish to determine the implications of Proposition \ref{prop:ncP1} in geometry. We let $\mathscr{C} = \DD^b_\mathrm{dg}(X)$ and $\Phi_\mathscr{L} = (-)\otimes^\mathbb{L} \mathscr{L}$ for some line bundle $\mathscr{L}$ on $X$. 
\begin{theorem}\label{thm:orlov}
The categories $\Gamma \mathscr{S}_\phi(\DD^b_\mathrm{dg}(X),\Phi_\mathscr{L})$ and $\DD^b_\mathrm{dg}(\mathbb{P}_X({\mathscr{O}_X\oplus\mathscr{L}}))$ are quasiequivalent.
\end{theorem}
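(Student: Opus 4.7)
My plan is to chain together the equivalences that the excerpt has already assembled, with Proposition \ref{prop:ncP1} doing the heavy lifting on the perverse-sheaf-of-categories side and the Orlov-style gluing description doing it on the geometric side.

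First, I would apply Proposition \ref{prop:ncP1} directly with $\mathscr{C} = \DD^b_\mathrm{dg}(X)$ and $\Phi = \Phi_\mathscr{L} = (-)\otimes^\mathbb{L}\mathscr{L}$. This produces a quasi equivalence
\[
\Xi : \DD^b_\mathrm{dg}(X) \times_{\mathsf{G}} \DD^b_\mathrm{dg}(X) \longrightarrow \Gamma\mathscr{S}_\phi(\DD^b_\mathrm{dg}(X),\Phi_\mathscr{L}),
\]
where the gluing bimodule is $\mathsf{G}(b,a) = \hom_{\DD^b_\mathrm{dg}(X)}(a, b\oplus \Phi_\mathscr{L}(b))$. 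Checking that the hypotheses of Proposition \ref{prop:ncP1} are met amounts only to noting that $\Phi_\mathscr{L}$ is an autoequivalence of $\DD^b_\mathrm{dg}(X)$, which is standard since $\mathscr{L}$ is a line bundle.

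Next, I would identify the bimodule $\mathsf{G}$ with the Orlov gluing bimodule $\mathsf{S}_\mathscr{E}$ for $\mathscr{E} = \mathscr{O}_X \oplus \mathscr{L}$. Since $\Phi_\mathscr{L}(b) = b \otimes^\mathbb{L} \mathscr{L}$ and tensor commutes with direct sum, we have
\[
b \oplus \Phi_\mathscr{L}(b) \;=\; b \otimes^\mathbb{L} (\mathscr{O}_X \oplus \mathscr{L}) \;=\; b \otimes^\mathbb{L} \mathscr{E},
\]
so $\mathsf{G}(b,a) = \hom_{\DD^b_\mathrm{dg}(X)}(a, b\otimes^\mathbb{L}\mathscr{E}) = \mathsf{S}_\mathscr{E}(b,a)$ as $\DD^b_\mathrm{dg}(X)$-$\DD^b_\mathrm{dg}(X)$ bimodules; the bimodule structure on each side coincides with composition of morphisms in $\DD^b_\mathrm{dg}(X)$ before and after tensoring with $\mathscr{E}$.

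Finally, I would invoke the chain of quasi equivalences recorded in the paragraph just before the theorem: the semiorthogonal decomposition $\langle \mathbf{L}p^*\DD^b(X),\, \mathbf{L}p^*\DD^b(X) \otimes^\mathbf{L} \mathscr{O}_\mathbb{P}(1)\rangle$ on $\DD^b(\mathbb{P}_X(\mathscr{E}))$ gives, via \cite[Proposition 4.10]{kl}, a quasi equivalence $\DD^b_\mathrm{dg}(X) \times_{\mathsf{S}_\mathbb{P}} \DD^b_\mathrm{dg}(X) \simeq \DD^b_\mathrm{dg}(\mathbb{P}_X(\mathscr{E}))$, and then \cite{orl-monoid} together with \cite[Lemma 4.7]{kl} identifies $\mathsf{S}_\mathbb{P}$ with $\mathsf{S}_\mathscr{E}$ up to quasi isomorphism of bimodules. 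Composing everything yields the desired quasi equivalence.

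The only step that I expect to require any real care is the identification of $\mathsf{G}$ with $\mathsf{S}_\mathscr{E}$: namely, verifying that the bimodule structure described in Proposition \ref{prop:ncP1}, which is built from the three Orlov-style functors of $\mathscr{S}_\phi(\mathscr{C},\Phi)$ at the two trivalent vertices of $K_\phi$, really matches the bimodule structure coming from pre- and post-composition with $\otimes^\mathbb{L}\mathscr{E}$. Everything else is a direct citation of results already stated in the excerpt.
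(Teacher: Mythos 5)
Your proposal matches the paper's argument: the paper likewise reduces via Proposition \ref{prop:ncP1} to the gluing $\DD^b_\mathrm{dg}(X)\times_\mathsf{G}\DD^b_\mathrm{dg}(X)$, identifies $\mathsf{G}$ with $\mathsf{S}_{\mathscr{O}_X\oplus\mathscr{L}}$, and then uses the semiorthogonal decomposition of $\DD^b(\mathbb{P}_X(\mathscr{E}))$ together with the comparison of $\mathsf{S}_\mathbb{P}$ and $\mathsf{S}_\mathscr{E}$ via \cite{orl-monoid} and \cite[Lemma 4.7, Proposition 4.10]{kl}. The only difference is expository direction (you start from the schober side, the paper from the geometric side), so the content is the same.
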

\begin{proof}
According to Orlov \cite[Example 3.9]{orl-glue}, if $\mathscr{E}$ is a rank 2 vector bundle on a variety $X$, then $\DD^b_\mathrm{dg}(\mathbb{P}_X(\mathscr{E}))$ is equivalent to the gluing of $\DD^b_\mathrm{dg}(X)$ to itself along the bimodule
\[
\mathsf{S}_\mathscr{E}(b,a) \cong \hom_{\DD^b_\mathrm{dg}(X)}(a,b\otimes^\mathbb{L} \mathscr{E}).
\]
This can be proved as follows. One has that $\DD^b(\mathbb{P}_X(\mathscr{E}))$ admits a semiorthogonal decomposition
\[
\langle \mathbf{L}p^*\DD^b(X), \mathbf{L}p^*\DD^b(X) \otimes^\mathbf{L} \mathscr{O}_\mathbb{P}(1) \rangle
\]
where $p: \mathbb{P}_X(\mathscr{E}) \rightarrow X$ is the natural projection map and $\mathscr{O}_\mathbb{P}(1)$ is the relative hyperplane bundle. Therefore, there is a quasiequivalence between $\DD^b_\mathrm{dg}(X) \times_{\mathsf{S}_\mathbb{P}} \DD^b_\mathrm{dg}(X)$ and $\DD^b_\mathrm{dg}(\mathbb{P}_X(\mathscr{E}))$ where 
\[
\mathsf{S}_\mathbb{P}(b,a) = \hom_{\DD^b_\mathrm{dg}(\mathbb{P}_X(\mathscr{E}))}(\mathbb{L}p^*a, \mathbb{L}p^*b \otimes^\mathbb{L}\mathscr{O}_{\mathbb{P}}(1)).
\]
This follows from \cite[Proposition 4.10]{kl}. Then, by \cite{orl-monoid}, we have that the cohomology of $\mathsf{S}_\mathbb{P}(b,a)$ is equal to that of $\mathsf{S}_\mathscr{E}(b,a)$, and they are quasiisomorphic bimodules. Finally \cite[Lemma 4.7]{kl} then says that $\DD^b_\mathrm{dg}(X) \times_{\mathsf{S}_\mathbb{P}} \DD^b_\mathrm{dg}(X)$ is quasiequivalent to $\DD^b_\mathrm{dg}(X) \times_{\mathsf{S}_\mathscr{E}} \DD^b_\mathrm{dg}(X)$.

If $\mathscr{E} = \mathscr{O}_X \oplus \mathscr{L}$, then $\mathsf{S}_\mathscr{E}$ is precisely the gluing functor $\mathsf{G}$ described in Proposition \ref{prop:ncP1}. We then deduce the result.
\end{proof}
\begin{remark}
More generally, if $\mathscr{C}$ is a dg category and $\Phi$ is an autoequivalence, then $\Gamma\mathscr{S}_\phi(\mathscr{C},\Phi)$ should be thought of as a noncommutative $\mathbb{P}^1$ bundle over $\mathscr{C}$. These are analogues of holomorphic families of categories with fibers equivalent to $\DD^b(\mathbb{P}^1)$ over the base $\mathscr{C}$. 
\end{remark}
\begin{remark}
This notion of noncommutative $\mathbb{P}^1$ bundles does not coincide with that of Van den Bergh \cite{vdbncP1} in general. That being said, Van den Bergh constructs the noncommutative $\mathbb{P}^1$ bundle $Q_\mathscr{E}$ using a sheaf bimodule $\mathscr{E}$. One may extend $\mathscr{E}$ to a dg $\DD^b_{\dg}(X)$-$\DD^b_{\dg}(X)$ bimodule $\mathsf{E}$. We expect that $\DD^b_{\dg}(\mathrm{qgr}(Q_\mathscr{E}))$ is quasiequivalent to $\DD^b_{\dg}(X) \times_\mathsf{E} \DD^b_{\dg}(X)$.

\end{remark}

\begin{remark}
The philosophy of perverse sheaves of categories is that their global sections represent partially wrapped Fukaya categories \cite{sylv} of fibrations over, in our case, Riemann surfaces. Theorem \ref{thm:orlov} is a good demonstration of this philosophy. Assume that we have a symplectic manifold $(M,\omega)$ and a symplectomorphism $\Psi$ of $(M,\omega)$. Then we may build a symplectic fibration over $S^1 \times \mathbb{R}$ which is a symplectic fiber bundle $(V,\sigma)$ with fiber $(M,\omega)$ and whose symplectic monodromy around the meridian of $S^1 \times \mathbb{R}$ is $\Psi$. Proposition \ref{prop:ncP1} suggests that the partially wrapped Fukaya category $\mathscr{WF}_s(V,\sigma)$ with two stops, one at a fiber in each boundary component, satisfies
\[
\DD^\pi \mathscr{WF}_s(V,\sigma) = \DD^\pi \mathscr{F}(M,\omega) \times_{\mathsf{S}_\Psi} \DD^\pi \mathscr{F}(M,\omega)
\]
where $\mathsf{S}_\Psi$ is the $\mathscr{F}(M,\omega)$-$\mathscr{F}(M,\omega)$ bimodule which satisfies
\[
\mathsf{S}_\Psi(b,a) = \hom_{\mathscr{F}(M,\omega)}(a, b \oplus \Psi(b)).
\]
A consequence of this is that if $(M,\sigma)$ is homologically mirror to an algebraic variety $X$, and the symplectomorphism $\Psi$ acts on $\mathscr{W}\mathscr{F}_s(M,\omega)$ in a way that is mirror to $(-)\otimes^\mathbb{L}\mathscr{L}$ for some line bundle $\mathscr{L}$ on $X$, then $(V,\sigma)$ should be the homological mirror to $\mathbb{P}_X(\mathscr{O}_X \oplus \mathscr{L})$. Similarly, the category $\DD^\pi \mathscr{WF}_s(V,\sigma)$ with $s$ denoting a stop at a fiber in one of the two boundary components should be equivalent to $\Perf(L)$ where $L$ is the total space of $\mathscr{L}$.

\end{remark}
%\begin{remark}[$\mathbb{P}^1$ bundles and Segal's construction]
%
%Section \ref{sect:Segal} and Section \ref{sect:Pbub} present very similar constructions. The category $\mathscr{S}_p(\mathscr{C},\Phi)$ embeds into the category $\mathscr{S}_\phi(\mathscr{C},\Phi)$ as the subcategory whose restriction to fibers along one of the two unbounded edges is homotopy trivial. This is a localization of $\mathscr{S}_\phi(\mathscr{C},\Phi)$ along a functor. In the case of Theorem \ref{thm:orlov}, this functor amounts to pullback to the derived category of one of the two sections of $\mathbb{P}_X(\mathscr{O}_X\oplus \mathscr{L})$, as we will show in Proposition \ref{prop:bundleblow}. Localization of $\DD^b(\mathbb{P}_X(\mathscr{O}_X\oplus \mathscr{L}))$ along this functor gives us the category $\Perf(L)$ where $L$ is the total space of $\mathscr{L}$. Hence $\mathscr{S}_p(\mathscr{C},\Phi)$ should be thought of as the category of coherent sheaves on a noncommutative line bundle over $\mathscr{C}$.
%
%Segal uses precisely the fact that $\Perf(L)$ can be thought of the category of sheaves of graded modules $\mathscr{O}_X \oplus \mathscr{L}$ algebras to deduce his construction in \cite[Example 2.4]{seg}. This explains why the computations of Section \ref{sect:Segal} suffice to recover the category that Segal uses.
%\end{remark}

\section{Perverse schobers and semiorthogonal decompositions}\label{sect:sod}

In this section, we will discuss how semiorthogonal decompositions and perverse schobers interact. In Section \ref{sect:sod1} we show that semiorthogonal decompositions combined with certain spherical functors give rise naturally to perverse schobers. In Section \ref{Sect:mutation}, we describe how mutation of semiorthogonal decompositions interacts with Kapranov and Schechtman's braid group action on perverse schobers.

\subsection{Semiorthogonal decompositions and Serre functors}\label{sect:sod1}

%First, let's define a semiorthogonal decomposition on a triangulated category
%\begin{defn}
%A full subcategory $\mathscr{T}_i$ of a triangulated category $\mathscr{T}$ is said to be admissible if it admits right and left quasi-adjoints. A set of admissible subcategories $\mathscr{T}_1,\dots,\mathscr{T}_n$ of $\mathscr{T}$ forms a semiorthogonal decomposition of $\mathscr{T}$ if
%\begin{enumerate}
%\item $\mathscr{T}(a_j,a_i) = 0 $ if $a_j \in \mathscr{T}_j, a_i \in \mathscr{T}_i$ and $i < j$.
%\item Every element $t$ of $\mathscr{T}$ fits into a diagram
%$$
%\begin{tikzcd}
%t_1 \ar{rr} & & b_1 \ar{ld} & \dotsm & b_{n-1} \ar{rr} & & t \ar{ld}\\
%& t_2 \ar[dashed]{lu} & & & & t_n  \ar[dashed]{lu}&
%\end{tikzcd}
%$$
%with $t_i \in \mathscr{T}_i$.
%\end{enumerate}
%\end{defn}

In this section, we will assume that all triangulated categories have finite dimensional spaces of homomorphisms, and that dg categories are enhancements of such triangulated categories. Let us recall a couple of concepts. If $\mathscr{T}$ is a triangulated category, the Serre functor $\mathsf{S}$ is an autoequivalence of $\mathscr{T}$ which has the property that there are functorial isomorphisms $\phi_{a,b} :\hom_\mathscr{T}(a,b) \rightarrow \hom_\mathscr{T}(b,\mathsf{S}(a))^\vee$ and so that the pairing $(\phi_{\mathsf{S}(b),\mathsf{S}(a)}^{-1})^\vee \cdot \phi_{a,b}$ is the morphism $\hom_\mathscr{T}(a,b) \rightarrow \hom_\mathscr{T}(\mathsf{S}(a),\mathsf{S}(b))$ induced by $\mathsf{S}$. Serre functors are unique if they exist \cite{bk2}. We will write $\mathsf{S}_\mathscr{T}$ if we want to emphasize the ambient category.

A triangulated subcategory $\mathscr{B}$ of $\mathscr{T}$ is called admissible if the embedding functor $\alpha : \mathscr{B} \rightarrow \mathscr{T}$ has right and left adjoints $\alpha^!$ and $\alpha^*$. To an admissible subcategory $\mathscr{B}$ there are admissible right and left orthogonal subcategories of $\mathscr{T}$ denoted $\mathscr{B}^\perp$ and $^\perp\mathscr{B}$ respectively and defined
$$
\mathscr{B}^\perp = \{ t \in \mathscr{T} : \hom_\mathscr{T}(b,t) = 0, \forall \,\, b \in \mathscr{B}\}, \qquad \prescript{\perp}{}{\mathscr{B}}= \{ t \in \mathscr{T} : \hom_\mathscr{T}(t,b) = 0, \forall\,\, b \in \mathscr{B}\}
$$
An ordered collection $(\mathscr{B}_1,\dots ,\mathscr{B}_k)$ of admissible subcategories forms a semiorthogonal decomposition of $\mathscr{T}$ if $\mathscr{B}_i \subseteq \mathscr{B}_j^\perp$ if $i < j$ and every element of $\mathscr{T}$ can be obtained by repeatedly taking cones of morphisms between elements of $\mathscr{B}_1,\dots, \mathscr{B}_k$. If $\mathscr{B}$ is an admissible subcategory of $\mathscr{T}$, then there are semiorthogonal decompositions $(\mathscr{B}, \prescript{\perp}{}{\mathscr{B}})$ and $(\mathscr{B}^\perp,\mathscr{B})$.

If $\mathscr{A}$ is a dg extension of $\mathscr{T}$, and $\mathscr{T}$ admits a semiorthogonal decomposition, we have dg extensions $\mathscr{A}_i$ of each $\mathscr{B}_i$ defined to be the full subcategory of $\mathscr{A}$ whose objects correspond to objects in $\mathscr{B}_i$. By a slight abuse of notation, we will say that $(\mathscr{A}_1,\dots, \mathscr{A}_k)$ is a semiorthogonal decomposition of $\mathscr{A}$ in this case.
The following result of Addington tells us how spherical functors interact with semiorthogonal decompositions. This extends a result of Seidel and Thomas \cite{st}.

\begin{proposition}[Addington {\cite[Proposition 1.1]{add}}]\label{prop:add}
Let $\mathscr{A}$ and $\mathscr{C}$ be pretriangulated dg categories whose homotopy categories have Serre functors $\mathsf{S}_{[\mathscr{A}]}$ and $\mathsf{S}_{[\mathscr{C}]}$. Assume that there is a spherical functor $F: \mathscr{A} \rightarrow \mathscr{C}$ with cotwist $C$ which induces $\mathsf{S}_{[\mathscr{A}]}[\ell]$ for some integer $\ell$. If $\alpha:\mathscr{B} \rightarrow \mathscr{A}$ is an admissible functor then $F\alpha$ is spherical and its cotwist induces $\mathsf{S}_{[\mathscr{B}]}[\ell]$.
\end{proposition}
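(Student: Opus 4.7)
The plan is to verify the two halves of the spherical condition for $F\alpha$ via the Anno--Logvinenko characterisation that uses the cotwist in place of the twist. First I would assemble the adjoints: since $F$ is spherical with adjoints $L \dashv F \dashv R$ and since $\alpha$ is admissible with adjoints $\alpha^* \dashv \alpha \dashv \alpha^!$, the composition $F\alpha$ has left adjoint $\alpha^* L$ and right adjoint $\alpha^! R$. Its cotwist is by definition $C_{F\alpha} = \cone(\mathrm{id}_\mathscr{B} \to \alpha^! R F \alpha)[-1]$. Applying the endofunctor $\alpha^!(-)\alpha$ to the defining cotwist triangle $\mathrm{id}_\mathscr{A} \to RF \to C[1]$ of $F$ and using that $\alpha^!\alpha \cong \mathrm{id}_\mathscr{B}$ (full faithfulness of $\alpha$) yields $\mathrm{id}_\mathscr{B} \to \alpha^! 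R F \alpha \to \alpha^! C \alpha[1]$, whence $C_{F\alpha} \cong \alpha^! C \alpha$. Combining this with the identity $\mathsf{S}_{[\mathscr{B}]} \cong \alpha^! \mathsf{S}_{[\mathscr{A}]}\alpha$ (a direct consequence of Serre duality in $\mathscr{A}$ together with full-faithfulness of $\alpha$) and the hypothesis $C \cong \mathsf{S}_{[\mathscr{A}]}[k]$ gives $C_{F\alpha} \cong \mathsf{S}_{[\mathscr{B}]}[k]$, which is in particular an autoequivalence.

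It remains to check that the canonical morphism $\alpha^! R \to C_{F\alpha}(\alpha^* L)[1]$ is a quasi-isomorphism. Sphericality of $F$ furnishes $R \cong C L[1]$, hence $\alpha^! R \cong \alpha^! C L[1]$; the task is therefore to compare $\alpha^! C L$ and $\alpha^! C \alpha \alpha^* L$. For this I would invoke the mutation triangle $\iota'(\iota')^! \to \mathrm{id}_\mathscr{A} \to \alpha\alpha^*$ corresponding to the semiorthogonal decomposition $\mathscr{A} = \langle \mathscr{B}, {}^\perp\mathscr{B} \rangle$, where $\iota': {}^\perp \mathscr{B} \to \mathscr{A}$ is the inclusion; pre- and post-composing by $\alpha^! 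C$ and $L$ respectively reduces the compatibility to showing $\alpha^! C \iota' \cong 0$. Here the assumption $C \cong \mathsf{S}_{[\mathscr{A}]}[k]$ enters essentially: for any $b \in {}^\perp\mathscr{B}$ and $\beta \in \mathscr{B}$, Serre duality gives $\hom_\mathscr{A}(\beta, \mathsf{S}_{[\mathscr{A}]}(\iota' b)) \cong \hom_\mathscr{A}(\iota' b, \beta)^\vee = 0$, so $\mathsf{S}_{[\mathscr{A}]}(\iota' b) \in \mathscr{B}^\perp$, and $\alpha^!$ annihilates $\mathscr{B}^\perp$.

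With the cotwist being an autoequivalence and the compatibility $\alpha^! R \cong C_{F\alpha}(\alpha^* L)[1]$ established, I would conclude by invoking the Anno--Logvinenko theorem that, for a quasi-functor with both adjoints, these two conditions already imply that the twist $T_{F\alpha}$ is an autoequivalence; hence $F\alpha$ is spherical in the sense of Section \ref{sect:pschob} and has cotwist inducing $\mathsf{S}_{[\mathscr{B}]}[k]$. The main obstacle is the vanishing $\alpha^! C \iota' \cong 0$ in the previous paragraph: this is the unique place where the hypothesis that $C$ realises the Serre functor up to shift is genuinely used, and it must be promoted from the homotopy category to a quasi-isomorphism of bimodules, which entails choosing coherent dg representatives for $C$ and for the mutation triangle but is otherwise routine given the bimodule framework of Section \ref{sect:bg}.
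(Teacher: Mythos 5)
Your proof is correct and supplies an argument for a result the paper simply outsources to Addington (the manuscript cites \cite[Proposition 1.1]{add} without reproducing a proof). The structure of your argument is the right one and matches the standard approach. The three pieces fit together cleanly: (i) the computation $C_{F\alpha} \cong \alpha^! C \alpha$ via restricting the defining triangle of $C$ along $\alpha$ and using $\alpha^!\alpha\cong\mathrm{id}_\mathscr{B}$; (ii) the Bondal--Kapranov identity $\mathsf{S}_{[\mathscr{B}]}\cong\alpha^!\mathsf{S}_{[\mathscr{A}]}\alpha$ for admissible subcategories, which combined with (i) gives $C_{F\alpha}\cong\mathsf{S}_{[\mathscr{B}]}[k]$, an autoequivalence; (iii) the adjoint-compatibility condition $\alpha^! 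R \to C_{F\alpha}(\alpha^* L)[1]$, reduced via the decomposition triangle $\iota'(\iota')^! \to \mathrm{id}_\mathscr{A} \to \alpha\alpha^*$ for $\mathscr{A}=\langle\mathscr{B},{}^\perp\mathscr{B}\rangle$ to the vanishing $\alpha^! C\iota'\cong 0$. Your observation that the hypothesis $C\cong\mathsf{S}_{[\mathscr{A}]}[k]$ enters exactly at step (iii) is the key point: the Serre functor carries ${}^\perp\mathscr{B}$ into $\mathscr{B}^\perp$, which $\alpha^!$ annihilates. Invoking the Anno--Logvinenko ``two out of four'' theorem to pass from (ii)+(iii) to the definition of sphericality used in Section \ref{sect:pschob} (twist is an autoequivalence and $R\cong CL[1]$) is the standard and correct way to finish.

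Two very minor points to tighten. First, when you write $\hom_\mathscr{A}(\beta,\mathsf{S}_{[\mathscr{A}]}(\iota' b))$ with $\beta\in\mathscr{B}$ you of course mean $\hom_\mathscr{A}(\alpha\beta,\mathsf{S}_{[\mathscr{A}]}(\iota' b))$; the Serre duality pairing then lands in $\hom_\mathscr{A}(\iota'b,\alpha\beta)^\vee=0$ as you say. Second, you correctly flag at the end that all the isomorphisms must be realized at the level of dg bimodules rather than merely in the homotopy category; this applies equally to the identifications $C_{F\alpha}\cong\alpha^!C\alpha$ and $\mathsf{S}_{[\mathscr{B}]}\cong\alpha^!\mathsf{S}_{[\mathscr{A}]}\alpha$, not only to the vanishing $\alpha^! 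C\iota'\cong 0$. All of these are bimodule operations that commute with cones, so the upgrade is indeed routine, but it is worth stating the caveat uniformly. With these small caveats attended to, the proof is complete.
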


Therefore if we have a functor $F$ satisfying the conditions of Proposition \ref{prop:add}, and if $\mathscr{A}$ admits a semiorthogonal decomposition $ (\mathscr{A}_1,\dots, \mathscr{A}_k)$, with admissible functors $\alpha_j: \mathscr{A}_j \rightarrow \mathscr{A}$ then we get an ordered sequence of spherical functors $F_j = F\alpha_j : \mathscr{A}_j \rightarrow \mathscr{C}$.
\begin{corollary}
In the situation of Proposition \ref{prop:add}, there is a natural $K$-coordinatized perverse schober $\mathscr{S}_K(\mathscr{C},\mathscr{A}_i,F_i)$ for an appropriate choice of $K$.
\end{corollary}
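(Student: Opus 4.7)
The plan is to assemble the data directly from Proposition \ref{prop:add}, since the hard analytic content (sphericity of each $F_j$) is already handled. The definition of a $K$-coordinatized perverse schober on $(D,\Sigma)$ only requires: a choice of skeleton $K$ of the disc anchored at points $\Sigma = \{p_1,\ldots,p_k\}$, a fiber category, and an ordered collection of spherical functors into the fiber category indexed by $\Sigma$.

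First I would fix the combinatorial data. Choose a base point $s$ in the interior of $D$ and $k$ marked points $p_1,\ldots,p_k$ in $D \setminus \{s\}$, one for each component in the semiorthogonal decomposition $(\mathscr{A}_1,\ldots,\mathscr{A}_k)$ of $\mathscr{A}$, with indexing matching the order of the decomposition. Choose a path $\gamma_\infty$ from $s$ to $\partial D$ and non-intersecting paths $\gamma_i$ from $s$ to each $p_i$, with the cyclic order $\gamma_1,\ldots,\gamma_k,\gamma_\infty$ at $s$ being counterclockwise. Set $K = \gamma_\infty \cup \bigcup_i \gamma_i$ with stratification by the point set $\{s\}\cup\Sigma$. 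This is precisely the type of skeleton described in Section \ref{sect:pschob} preceding the definition of a $K$-coordinatized perverse schober.

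Next I would specify the categorical data. Let $\mathscr{C}$ be the fiber category (the target of the given spherical functor $F$) and attach to each marked point $p_i$ the pretriangulated dg category $\mathscr{A}_i$ together with the functor $F_i := F\alpha_i \colon \mathscr{A}_i \to \mathscr{C}$, where $\alpha_i \colon \mathscr{A}_i \hookrightarrow \mathscr{A}$ is the admissible embedding arising from the semiorthogonal decomposition. By Proposition \ref{prop:add}, applied to each admissible embedding $\alpha_i$, the composition $F_i$ is itself a spherical functor, and moreover its cotwist induces $\mathsf{S}_{[\mathscr{A}_i]}[k]$. Thus each $F_i$ satisfies precisely the requirement imposed at each marked point of a $K$-coordinatized perverse schober.

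Finally, I would verify the remaining hypotheses of the definition: $\mathscr{A}_i$ and $\mathscr{C}$ are idempotent closed pretriangulated dg categories (the first because admissible subcategories of idempotent complete pretriangulated categories inherit these properties, the second by hypothesis), so the tuple $(\mathscr{C}, \mathscr{A}_i, F_i, K)$ assembles into a $K$-coordinatized perverse schober $\mathscr{S}_K(\mathscr{C},\mathscr{A}_i,F_i)$ in the sense of Section \ref{sect:pschob}. There is no real obstacle here since Proposition \ref{prop:add} already supplies the crucial sphericity statement; the only content of the corollary is organizing the semiorthogonal data into the geometric shape of a skeleton on the disc, so the "proof" amounts to a definition-chase.
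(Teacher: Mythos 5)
Your proposal is correct and follows exactly the approach the paper takes: the paper's entire justification is the sentence preceding the corollary, namely that Proposition~\ref{prop:add} supplies the sphericity of each $F_j = F\alpha_j$, and the rest is a definition-chase into the skeleton data. The only thing you add is the explicit observation that the admissible subcategories $\mathscr{A}_i$ inherit idempotent completeness, a hypothesis the paper silently takes for granted.
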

We would like to check that the category of global sections of this $\mathscr{S}(\mathscr{C},\mathscr{A}_i,F_i)$ recovers $\mathscr{A}$. We will say a subcategory $\mathscr{B}$ of a pretriangulated dg category $\mathscr{A}$ is admissible if $\HH^0\mathscr{B}$ is an admissible subcategory of $\HH^0\mathscr{A}$. We will define $\prescript{\perp}{}{\mathscr{B}}$ and $\mathscr{B}^\perp$ as the full subcategories of $\mathscr{A}$ whose objects correspond to objects of $\prescript{\perp}{}{\HH^0\mathscr{B}}$ and $\HH^0\mathscr{B}^\perp$ respectively.
\begin{lemma}\label{lemma:glue}
Let $\mathscr{A}$ be a pretriangulated dg category and let $\mathscr{B}$ be an admissible subcategory. Let $a \in \mathscr{B}$ and $b$ in $\mathscr{B}^\perp$, and assume that there's a spherical functor $F : \mathscr{A} \rightarrow \mathscr{C}$ with right adjoint $R$ so that the cotwist $C$ of $F$ on $\mathscr{A}$ is a shift of the Serre functor $\mathsf{S}_{[\mathscr{A}]}$. Then we have a quasiisomorphism of complexes
$$
\hom_\mathscr{A}(a,b) \cong {\hom_\mathscr{C}}(F(a),F(b)).
$$
\end{lemma}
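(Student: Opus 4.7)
The plan is to use the spherical triangle coming from the unit of the $F \dashv R$ adjunction, namely
\[
\mathrm{id}_\mathscr{A} \longrightarrow RF \longrightarrow C[1],
\]
applied to $b$ and then hom'd against $a$. This gives a distinguished triangle of complexes
\[
\hom_\mathscr{A}(a,b) \longrightarrow \hom_\mathscr{A}(a, RF(b)) \longrightarrow \hom_\mathscr{A}(a, C(b)[1]).
\]
The adjunction identifies the middle term with $\hom_\mathscr{C}(F(a), F(b))$ up to quasi-isomorphism, and under this identification the first arrow is precisely the map induced by $F$ (this is the standard fact that composing with the unit realises the adjunction isomorphism). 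Thus to prove the lemma it suffices to show that the first and third terms of the triangle are acyclic.

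For the first term, since $\mathscr{B}^\perp$ is a triangulated subcategory, $b[n] \in \mathscr{B}^\perp$ for every $n$, and the defining semiorthogonality gives $\HH^n \hom_\mathscr{A}(a,b) = \hom_{\HH^0\mathscr{A}}([a],[b][n]) = 0$, so $\hom_\mathscr{A}(a,b)$ is acyclic. For the third term, I would use the hypothesis $C \simeq \mathsf{S}_{[\mathscr{A}]}[k]$ on the homotopy category to rewrite
\[
\hom_\mathscr{A}(a, C(b)[1]) \;\simeq\; \hom_\mathscr{A}(a, \mathsf{S}_\mathscr{A}(b)[k+1])
\]
at the level of cohomology, and then invoke Serre duality. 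This makes the cohomology of the third term dual to the cohomology of $\hom_\mathscr{A}(b, a[-k-1])$, which vanishes since $a[-k-1] \in \mathscr{B}$ and $b \in \mathscr{B}^\perp$. Combining these two vanishings with the triangle above yields that $\hom_\mathscr{C}(F(a), F(b))$ is also acyclic and that the natural morphism $F$ is a quasi-isomorphism of complexes, as required.

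The main things to be careful about are formal rather than substantive. One needs to verify that the map $b \to RF(b)$ arising in the definition of $C$ really is the unit of the $F\dashv R$ adjunction (so that the first arrow in the triangle is induced by $F$ on morphism complexes), and that the quasi-isomorphism $C \simeq \mathsf{S}_{[\mathscr{A}]}[k]$, which is given only at the level of $\HH^0\mathscr{A}$, is enough to compute the cohomology groups appearing above. Both points are immediate from the definitions of spherical functor and cotwist recalled in Section \ref{sect:pschob} together with the fact that Serre functors are determined up to canonical isomorphism.
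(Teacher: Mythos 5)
Your proof follows the same strategy as the paper's (apply $\hom_\mathscr{A}(a,-)$ to the cotwist triangle $b \to RF(b) \to C(b)$ and identify terms), but the way you conclude contains a genuine error: you claim that \emph{both} the first term $\hom_\mathscr{A}(a,b)$ and the third term $\hom_\mathscr{A}(a,C(b)[1])$ are acyclic, and deduce from the triangle that the middle term is acyclic as well. That cannot be the content of the lemma. If $\hom_\mathscr{A}(a,b)$ were acyclic, the statement would reduce to $0 \simeq 0$, which carries no information about $F$; the whole point of the lemma, as used in Corollary \ref{cor:qi} and Theorem \ref{thm:recon}, is to identify the nontrivial gluing bimodule $\hom_\mathscr{A}(a,b)$ of the semiorthogonal decomposition with $\hom_\mathscr{C}(F(a),F(b))$, so that $\mathscr{A}$ can be reconstructed as $\mathscr{B}\times_\varpi\mathscr{B}^\perp$.

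The source of the error is that your two vanishing claims invoke opposite semiorthogonalities, and in an admissible pair only one of them holds. With the paper's definition $\mathscr{B}^\perp = \{t : \hom_\mathscr{T}(s,t)=0 \ \forall s\in\mathscr{B}\}$, the SOD is $\mathscr{A}=\langle\mathscr{B}^\perp,\mathscr{B}\rangle$; the vanishing direction is $\hom(\mathscr{B},\mathscr{B}^\perp)=0$, and $\hom(\mathscr{B}^\perp,\mathscr{B})$ is precisely the nontrivial data. Your vanishing of the first term uses $\hom(a,b)=0$, which is this right-orthogonality. But your vanishing of the third term, after Serre duality, amounts to $\hom_\mathscr{A}(b,a[-k-1])=0$ — and for $b\in\mathscr{B}^\perp$, $a\in\mathscr{B}$, this is the \emph{nonzero} direction. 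It would vanish only if $b\in\prescript{\perp}{}{\mathscr{B}}$ (the left orthogonal), which is a different subcategory.

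For the record, there is a notational slip in the lemma itself which likely contributed to the confusion: the statement says $b\in\mathscr{B}^\perp$, but both the paper's proof and all downstream applications (Corollary \ref{cor:obvs}, Theorem \ref{thm:recon}, where the induction runs over $\mathscr{A}_{1,j+1}=\langle\mathscr{A}_{1,j},\mathscr{A}_{j+1}\rangle$) require $b$ to lie in the left orthogonal $\prescript{\perp}{}{\mathscr{B}}$. Under that reading, only the \emph{third} term of your triangle vanishes — via Serre duality plus $\hom(b,a)=0$ — and the triangle then identifies the first term $\hom_\mathscr{A}(a,b)$, which is the interesting Hom-complex of the decomposition, with $\hom_\mathscr{C}(F(a),F(b))$. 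You should drop the assertion that $\hom_\mathscr{A}(a,b)$ is acyclic; it is the thing being computed, not a thing that vanishes.

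Your auxiliary remarks (that $b\to RF(b)$ is the adjunction unit so the first arrow is induced by $F$, and that the $\HH^0$-level isomorphism $C\simeq\mathsf{S}_{[\mathscr{A}]}[k]$ suffices to compute cohomology) are both sound and match what the paper tacitly uses.
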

\begin{proof}
The condition that the cotwist $C$ of $F$ is a shift of the Serre functor says that we have a distinguished triangle,
$$
b \longrightarrow RF(b) \longrightarrow C(b).
$$
Applying the triangulated functor $\hom_\mathscr{A}(a,-)$ from $\mathscr{A}$ to $\mathrm{Ch}_k$ to this exact triangle, we get a distinguished triangle
$$
\hom_{\mathscr{A}}(a,b) \longrightarrow \hom_\mathscr{A}(a, RF(b)) \longrightarrow \hom_\mathscr{A}(a, C(b))
$$
By adjunction, we have that the second term is quasiisomorphic to $\hom_\mathscr{C}(F(a), F(b))$ and the third term is quasiisomorphic to $\hom_\mathscr{A}(b[k], a)$ by the fact that $C$ is equivalent to the Serre functor up to shift. Since $a \in \mathscr{B}$ and $b \in \mathscr{B}^\perp$, this vanishes up to homotopy and hence we get the required quasiisomorphism of complexes.
\end{proof}
Let us assume that there are dg categories $\mathscr{C}$ and $\mathscr{A}$ and that there is a spherical functor $F : \mathscr{A} \rightarrow \mathscr{C}$ with cotwist $C$ which is equivalent to $\mathsf{S}_{[\mathscr{A}]}[k]$. Let us also assume that there is an admissible subcategory $\mathscr{B}$ of $\mathscr{A}$. Then we have an immediate corollary to Lemma \ref{lemma:glue}.
\begin{corollary}\label{cor:qi}
There is a quasiisomorphism between the $\mathscr{B}$-$\mathscr{B}^\perp$ dg bimodules defined as
$$
\varphi: b \otimes a \in (\mathscr{B}^\perp)^\mathrm{op} \otimes \mathscr{B} \mapsto \hom_\mathscr{A}(a,b), \qquad \varpi: b \otimes a \in  (\mathscr{B}^\perp)^\mathrm{op} \otimes \mathscr{B} \mapsto \hom_\mathscr{C}(F(a),F(b)).
$$
\end{corollary}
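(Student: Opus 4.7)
The plan is to produce the comparison map as a natural bimodule-level morphism and then invoke Lemma \ref{lemma:glue} to verify that it is a pointwise quasi-isomorphism; since a map of dg bimodules is a quasi-isomorphism exactly when it is a pointwise quasi-isomorphism, the corollary follows.

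First, I would exhibit the comparison map. Fix strict bimodule representatives of the quasi-functors $F$ and $R$, together with a strict bimodule-level unit $\eta : \mathrm{id}_\mathscr{A} \rightarrow RF$ of the adjunction $F \dashv R$. Postcomposition with $\eta_b$ gives a closed, degree zero morphism
\[
\eta_* : \hom_\mathscr{A}(a,b) \longrightarrow \hom_\mathscr{A}(a, RF(b))
\]
that is natural in both variables, and the adjunction supplies a natural quasi-isomorphism $\hom_\mathscr{A}(a, RF(b)) \simeq \hom_\mathscr{C}(F(a),F(b))$. Composing and restricting along the admissible inclusions $\mathscr{B} \hookrightarrow \mathscr{A}$ and $\mathscr{B}^\perp \hookrightarrow \mathscr{A}$ yields a morphism of bimodules $\Phi : \varphi \rightarrow \varpi$; concretely, $\Phi_{a,b}$ is the dg-functorial action of $F$ on hom complexes, transported through the adjunction.

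Second, I would invoke Lemma \ref{lemma:glue} pointwise. By construction, $\Phi_{a,b}$ is exactly the first arrow in the distinguished triangle
\[
\hom_\mathscr{A}(a,b) \longrightarrow \hom_\mathscr{A}(a, RF(b)) \longrightarrow \hom_\mathscr{A}(a, C(b))
\]
used in the proof of Lemma \ref{lemma:glue}, whose third term is shown to be acyclic when $a \in \mathscr{B}$ and $b \in \mathscr{B}^\perp$. Hence $\Phi_{a,b}$ is a quasi-isomorphism for every such pair, and so $\Phi$ is a quasi-isomorphism of $\mathscr{B}$-$\mathscr{B}^\perp$ bimodules.

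The principal obstacle is bookkeeping rather than substance. Since $F$ and $R$ are a priori only quasi-functors, the unit $\eta$ and the adjunction isomorphism are bimodule morphisms only after one fixes suitable (e.g. cofibrant) bimodule representatives; once this is done, everything is formal and reduces to naturality combined with the pointwise content of Lemma \ref{lemma:glue}. No new ideas beyond those already present in the proof of the lemma are required.
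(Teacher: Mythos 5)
Your proposal is correct and follows essentially the same route as the paper, which simply declares the corollary "immediate" from Lemma \ref{lemma:glue}: the comparison map is the action of $F$ on hom complexes (equivalently, postcomposition with the unit transported through the adjunction, as you write), it is a bimodule morphism by naturality, and Lemma \ref{lemma:glue} identifies it pointwise with the quasi-isomorphism $\eta_*$ whose cofiber $\hom_\mathscr{A}(a,C(b))$ is acyclic. The only cosmetic remark is that you can avoid the adjunction bookkeeping entirely by taking the comparison map to be the direct map $f \mapsto F(f)$, which is already a strict bimodule morphism when $F$ is a genuine dg functor (as it is in all the applications in the paper), and then reading off from the proof of Lemma \ref{lemma:glue} that this coincides with $\eta_*$ up to the adjunction isomorphism.
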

There is a quasiequivalence, \cite[Proposition 4.10]{kl} $c_\varphi: \mathscr{B} \times_\varphi \mathscr{B}^\perp \rightarrow \Tw \mathscr{A}$ which sends a triple $(a,b,g)$ to $\cone(g)$ in $\Tw \mathscr{A}$. There is also a natural functor $c_\varpi: \mathscr{B} \times_\varpi \mathscr{B}^\perp \rightarrow \Tw \mathscr{C}$ which operates the same way, sending $(a,b,\mu)$ to $\cone(\mu)$. Finally, there is a functor $f: \mathscr{B} \times_\varphi \mathscr{B}^\perp \rightarrow \mathscr{B} \times_\varpi \mathscr{B}^\perp$ sending $(a,b,g)$ to $(a,b, F(g))$, which is a quasiequivalence of categories \cite[Lemma 4.7]{kl}. We obtain the following commutative diagram of categories
\begin{equation}\label{comm}
\begin{CD}
\mathscr{B} \times_\varphi \mathscr{B}^\perp @>f>> \mathscr{B} \times_\varpi \mathscr{B}^\perp \\
@Vc_\varphi VV @Vc_\varpi VV \\
\Tw \mathscr{A} @>F^{\Tw} >> \Tw\mathscr{C} \\
@AAA @AAA \\
\mathscr{A} @>F >> \mathscr{C}
\end{CD}
\end{equation}
The vertical maps on the bottom are the natural embeddings, which are quasiequivalences since we assume that $\mathscr{A}$ and $\mathscr{C}$ are pretriangulated. The chain of functors along the left hand side and top of Equation \ref{comm} provide a quasiequivalence between $\mathscr{B} \times_\varphi \mathscr{B}^\perp$ and $\mathscr{A}$. This chain of quasiequivalences also identifies $c_\varphi$ with $F$ as bimodules. We record this as a corollary;
\begin{corollary}\label{cor:obvs}
Keeping the notation of Corollary \ref{cor:qi}, the categories $\mathscr{B} \times_\varphi \mathscr{B}^\perp$ and $\mathscr{A}$ are quasiequivalent. Furthermore $F $ and $c_\varpi$ are identified under the corresponding chain of quasiequivalences.
\end{corollary}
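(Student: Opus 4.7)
The plan is to prove Corollary \ref{cor:obvs} by essentially tracing the commutative diagram (\ref{comm}) and verifying that the arrows in it are indeed quasi equivalences with the stated compatibility on bimodules. All the hard work has been packaged into the results of Kuznetsov--Lunts that are cited in the paragraph preceding the corollary, so the proof should be a short diagram chase rather than a new computation.

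For the first assertion, I would observe that each of the four named functors in (\ref{comm}) is a quasi equivalence: the vertical embeddings $\mathscr{A}\hookrightarrow\Tw\mathscr{A}$ and $\mathscr{C}\hookrightarrow\Tw\mathscr{C}$ are quasi equivalences because $\mathscr{A}$ and $\mathscr{C}$ are assumed pretriangulated; the functor $c_\varphi$ is a quasi equivalence by \cite[Proposition 4.10]{kl} because $\varphi$ is literally the bimodule coming from the pair of inclusions $\mathscr{B}\hookrightarrow\mathscr{A}\hookleftarrow\mathscr{B}^\perp$ and the Kuznetsov--Lunts gluing for a semiorthogonal decomposition of a pretriangulated category recovers the ambient category; finally $f$ is a quasi equivalence by \cite[Lemma 4.7]{kl} applied to the bimodule quasi isomorphism $\varphi\simeq\varpi$ of Corollary \ref{cor:qi}. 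The zig-zag
\[
\mathscr{B}\times_\varphi\mathscr{B}^\perp \xrightarrow{\;c_\varphi\;}\Tw\mathscr{A}\xleftarrow{\;\sim\;}\mathscr{A}
\]
then establishes the first claim.

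For the second assertion, I would check that the upper half of (\ref{comm}) commutes as a diagram of bimodules (not merely up to homotopy on objects). On an object $(a,b,g)\in\mathscr{B}\times_\varphi\mathscr{B}^\perp$, both $c_\varpi\cdot f$ and $F^{\Tw}\cdot c_\varphi$ produce $\Cone(F(g))$, since $F^{\Tw}$ commutes with the explicit cone construction in $\Tw\mathscr{A}$ by definition. On morphisms, the definitions of $c_\varphi$, $c_\varpi$ in Definition \ref{def:bbb} show that both paths send a triple $(r,s,t)$ to the same $2\times 2$ matrix with entries $F(r),F(s),F(t)$, so the square commutes on the nose and the corresponding bimodules agree. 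Passing to the bottom square, the vertical quasi equivalences $\mathscr{A}\hookrightarrow\Tw\mathscr{A}$ and $\mathscr{C}\hookrightarrow\Tw\mathscr{C}$ identify $F$ with $F^{\Tw}$ as bimodules, so composing with the zig-zag of the first paragraph identifies $c_\varpi$ with $F$ under the chain of quasi equivalences between $\mathscr{B}\times_\varphi\mathscr{B}^\perp$ and $\mathscr{A}$.

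The only subtle point, which I view as the main (mild) obstacle, is that the identification of bimodules must be tracked through the inverse of the quasi equivalence $\mathscr{A}\hookrightarrow\Tw\mathscr{A}$, so strictly speaking one obtains a zig-zag of bimodule quasi isomorphisms rather than an equality. This is standard, however: since the embedding is a quasi equivalence of pretriangulated dg categories, restriction along it induces a Morita equivalence on bimodules, and the image of the bimodule representing $F^{\Tw}$ is the bimodule representing $F$. Beyond this bookkeeping, no further computation is needed.
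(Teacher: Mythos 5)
Your proposal is correct and follows exactly the same route as the paper: verify that $c_\varphi$ (via \cite[Proposition 4.10]{kl}), $f$ (via \cite[Lemma 4.7]{kl} and Corollary \ref{cor:qi}), and the embeddings into twisted complexes are quasi equivalences, then read the two claims off the commutativity of diagram (\ref{comm}). The paper treats this as immediate from the preceding discussion; your write-up merely spells out the commutativity check on objects and morphisms, which is a harmless elaboration rather than a different argument.
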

Using Propositions \ref{ref:klprop2} and \ref{ref:klprop3}, we have that for any dg category $\mathscr{D}$ and functor $G:\mathscr{D} \rightarrow \mathscr{C}$, there is a chain of quasiequivalences between $\mathscr{D} \times_{G,F}\mathscr{A}$ and $\mathscr{D} \times_{G,c_\varphi} (\mathscr{B}\times_\varphi \mathscr{B}^\perp)$ so that $c_{G,c_\varphi}$ is quasiequivalent as a dg bimodule to $c_{G,F}$. We will now prove the following result. 

\begin{theorem}\label{thm:recon}
Let $F:\mathscr{A} \rightarrow \mathscr{C}$ be a spherical functor whose cotwist induces the Serre functor on $\HH^0\mathscr{A}$ (up to shift) and assume that $\HH^0\mathscr{A}$ admits a semiorthogonal decomposition into categories $\HH^0\mathscr{A}_1,\dots, \HH^0\mathscr{A}_k$ where $\mathscr{A}_1,\dots, \mathscr{A}_k$ are full subcategories of $\mathscr{A}$. Let $\alpha_i$ be the full and faithful embedding of $\mathscr{A}_i$ into $\mathscr{A}$, and let $F_i = F\alpha_i$. Then the category $\mathscr{A}$ is quasi-equivalent to $\mathscr{G}\ell(\mathscr{A}_i,F_i)$. Furthermore, the natural functor $c_{F_1,\dots, F_n} : \mathscr{G}\ell (\mathscr{A}_i,F_i) \rightarrow \mathscr{C}$ is quasiisomorphic to $F$.
\end{theorem}
\begin{proof}
Let us proceed by induction. We have that $\mathscr{A}_1,\dots, \mathscr{A}_k$ forms a semiorthogonal decomposition for $\mathscr{A}$ and $\alpha_j:\mathscr{A}_j \rightarrow \mathscr{A}$ are the corresponding embeddings and $F :\mathscr{A} \rightarrow \mathscr{C}$ is the spherical functor to $\mathscr{C}$. Define subcategories $\mathscr{A}_{1,j}$ of $\mathscr{A}$ to be the full subcategory whose objects are those of $\mathscr{A}_1,\dots, \mathscr{A}_j$. We see that $\HH^0\mathscr{A}_{1,j}$ admits a semiorthogonal decomposition $\HH^0\mathscr{A}_1,\dots, \HH^0\mathscr{A}_j$, hence the composition of the embedding functor and the functor $F$ give a functor $F_{1,j}:\mathscr{A}_{1,j} \rightarrow \mathscr{C}$ which is spherical with cotwist $\mathsf{S}_{\mathscr{A}_{1,j}}[\ell]$. It follows from Corollary \ref{cor:obvs} that $\mathscr{A}_{1,j+1}$ is quasiisomorphic to $\mathscr{A}_{1,j} \times_{F_{1,j},F_{j+1}} \mathscr{A}_{j+1}$. Furthermore, the functor $c_\varpi$ coincides with $F_{1,j+1}$ under this quasiisomorphism. The theorem follows then by recursively applying this observation.
\end{proof}
As a corollary to Theorem \ref{thm:recon} and Theorem \ref{thm:glsect} we have the following result. 
\begin{corollary}\label{cor:extw}
Let $\mathscr{A}$ be a pretriangulated category which admits a semiorthogonal decomposition $\mathscr{A} = (\mathscr{A}_1,\dots, \mathscr{A}_k)$ and a spherical functor $F : \mathscr{A} \rightarrow \mathscr{C}$ whose cotwist induces the Serre functor on $\HH^0\mathscr{A}$ up to shift. The perverse schober $\mathscr{S}_{K}(\mathscr{C},\mathscr{A}_i,F_i)$ has category of global sections quasiequivalent to $\mathscr{A}$.
\end{corollary}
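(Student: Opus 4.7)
The approach is to combine Proposition \ref{prop:add}, Theorem \ref{thm:recon}, and Theorem \ref{thm:glsect}. Since $F$ is spherical with cotwist inducing $\mathsf{S}_{[\mathscr{A}]}[k]$ for some fixed shift $[k]$, Proposition \ref{prop:add} applies to every admissible embedding $\alpha_i\colon\mathscr{A}_i\hookrightarrow\mathscr{A}$ and guarantees that each $F_i:=F\alpha_i$ is spherical with cotwist inducing $\mathsf{S}_{[\mathscr{A}_i]}[k]$. In particular the data $(\mathscr{C},\{\mathscr{A}_i\},\{F_i\})$ really does assemble into a $K_k$-coordinatized perverse schober, so the statement is meaningful.

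Next I would invoke the construction used in the proof of Theorem \ref{thm:recon} to recover $\mathscr{A}$ as an iterated Kuznetsov--Lunts gluing. Following that strategy, one proceeds inductively on $k$: each subcategory $\mathscr{A}_{1,j}\subseteq\mathscr{A}$ generated by $\mathscr{A}_1,\dots,\mathscr{A}_j$ is admissible, and by Proposition \ref{prop:add} again the composite $F_{1,j}\colon\mathscr{A}_{1,j}\to\mathscr{C}$ is spherical with cotwist inducing $\mathsf{S}_{[\mathscr{A}_{1,j}]}[k]$. Lemma \ref{lemma:glue} (and hence Corollary \ref{cor:obvs}) then identifies $\mathscr{A}_{1,j+1}$ with $\mathscr{A}_{1,j}\times_{F_{1,j},F_{j+1}}\mathscr{A}_{j+1}$ in such a way that $F_{1,j+1}$ corresponds to the natural cone functor. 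Iterating produces a quasi equivalence $\mathscr{A}\simeq\mathscr{G}\ell(\mathscr{A}_i,F_i)$ under which $F$ is identified with $c_{F_1,\dots,F_k}$.

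Finally, Theorem \ref{thm:glsect} supplies a quasi equivalence $Q_{F_1,\dots,F_k}\colon\mathscr{G}\ell(\mathscr{A}_i,F_i)\to\Gamma\mathscr{S}_{K_k}(\mathscr{A}_i,F_i)$ intertwining $c_{F_1,\dots,F_k}$ with the section functor $s_\infty$; composing the two quasi equivalences gives the desired statement. The main subtlety is the inductive verification of the hypothesis of Theorem \ref{thm:recon}, but since Lemma \ref{lemma:glue} together with Proposition \ref{prop:add} provides all the required input at each step, the corollary reduces to a careful assembly of previously established quasi equivalences, and no essentially new argument is required.
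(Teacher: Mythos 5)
Your proof is correct and follows the paper's intended route: combine Theorem \ref{thm:recon} (identifying $\mathscr{A}$ with the iterated gluing $\mathscr{G}\ell(\mathscr{A}_i,F_i)$, where Proposition \ref{prop:add} and Lemma \ref{lemma:glue} supply the hypothesis on hom-spaces) with Theorem \ref{thm:glsect} (identifying $\mathscr{G}\ell(\mathscr{A}_i,F_i)$ with $\Gamma\mathscr{S}_{K_k}(\mathscr{A}_i,F_i)$). The paper's lead-in cites Corollary \ref{cor:KL-S} where Theorem \ref{thm:glsect} is evidently what is meant, so your choice of supporting result quietly corrects an apparent misprint in the source.
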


%The following example will be very important in Section \ref{sect:hms}. 
%\begin{ex}\label{ex:quadric}
%Let $G_4$ be the toric boundary divisor in $Q$ given by $x_0x_1y_0y_1 = 0$. This divisor is normal crossings and anticanonical in $Q$, therefore, if $i$ is the embedding of $G_4$ into $Q$ then the functor $\mathbb{L}i^*: \DD^b_\mathrm{dg}(Q) \rightarrow \Perf(G_4)$ is a spherical functor with cotwist equal to the Serre functor on $\DD^b_\mathrm{dg}(Q)$ up to a shift. The objects $\mathscr{O}_{Q}(0,0), \mathscr{O}_Q(1,0), \mathscr{O}_Q(0,1)$ and $\mathscr{O}_Q(1,1)$ form a strong exceptional collection on $Q$ and
%\begin{align}\label{eq:except}
%\mathbb{L}i^*\mathscr{O}_Q(0,0) &= \mathscr{O}_{G_4}, \qquad \quad \quad \quad \mathbb{L}i^*\mathscr{O}_Q(1,0) = \mathscr{O}_{G_4}(p_1 + p_3),\\ \mathbb{L} i^*\mathscr{O}_Q(0,1) &= \mathscr{O}_{G_4}(p_2 + p_4),\quad\mathbb{L} i^*\mathscr{O}_Q(1,1) = \mathscr{O}_{G_4}(p_1 + p_2 + p_3 + p_4).
%\end{align}
%where $p_i$ denotes a smooth point on the $i^\mathrm{th}$ copy of $\mathbb{P}^1$ in $G_4$, ordered cyclically. We may apply Corollary \ref{cor:extw} and Proposition \ref{prop:tw} to see that  if $U$ is the set of objects in Equation \ref{eq:except} then the category $\Tw \Perf(G_4)^\rightarrow_U$ is quasiequivalent to $\DD^b_\mathrm{dg}(Q)$.
%
%According to \cite{stz}, the category $\Perf(G_4)$ can be constructed as a homotopy limit of four copies of $\DD^b_\mathrm{dg}(\mathbb{P}^1)$.
%
%\end{ex}

\subsection{Mutations and $K$-coordinatization}\label{Sect:mutation}

\begin{figure}
\begin{tikzpicture}
\node at (0.3,-0.3){$v$};
\node at (1.5,1){$p_{i+1}$};
\node at (-1.3,1){$p_i$};
\fill (0,0) circle[radius=2pt] ;
\fill (1,1) circle[radius=2pt] ;
\fill (-1,1) circle[radius=2pt] ;
\draw[very thick] (0,0) .. controls (0.7,0.5) .. (1,1);
\draw[very thick] (0,0) .. controls (-0.7,0.5) .. (-1,1);
\draw[very thick] (0,0) to (0,-0.5);
\end{tikzpicture}\qquad \qquad
\begin{tikzpicture}
\node at (0.3,-0.3){$v$};
\node at (1.5,1){$p_{i+1}$};
\node at (-1.3,1){$p_i$};
\fill (0,0) circle[radius=2pt] ;
\fill (1,1) circle[radius=2pt] ;
\fill (-1,1) circle[radius=2pt] ;
\draw[very thick] (0,0) .. controls (0.7,0.5) .. (1,1);
\draw[very thick] (0,0) .. controls (4.5,0.5) and (0,3.5) .. (-1,1);
\draw[very thick] (0,0) to (0,-0.5);
\end{tikzpicture}\caption{\label{figure:mutation}Action of $\sigma_i$ on $K$.}
\end{figure}
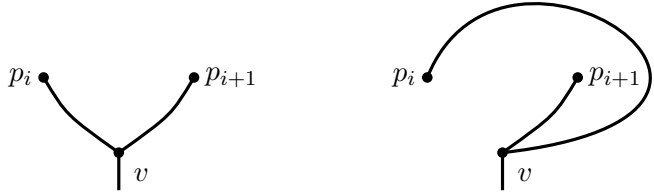

Kapranov and Schechtman \cite[Section 2.A]{ks1} define an action of $\mathrm{Br}_{|\Sigma|}$ on the set of all $K$-coordinatized perverse schobers on $(D,\Sigma)$. This acts on skeleta $K$, since $\mathrm{Br}_{|\Sigma|} = \pi_0 \mathrm{Diff}^+(D,\Sigma,s)$ (see \cite[2.B]{ks1} for details or Figure \ref{figure:mutation} for an illustration).

This group is generated by elements $\sigma_i$. We define an action of $\mathrm{Br}_{|\Sigma|}$ on the set of all perverse schobers by saying that $\sigma_i$ takes a schober $\mathscr{S}_K(\mathscr{C},\mathscr{A}_j,F_j)$ and produces $\mathscr{S}_{\sigma_i(K)}(\mathscr{C},\mathscr{A}'_j, F_j')$
so that $F_j' = F_j$ and $\mathscr{A}_j' = \mathscr{A}_j$ if $j \neq i,i+1$, $\mathscr{A}_{i+1}' = \mathscr{A}_i, \mathscr{A}_i' = \mathscr{A}_{i+1}$, and that $F_i' = F_{i+1}, F_{i+1}' = T_{i+1}' F_i$ where $T_{i+1}$ is the spherical twist associated to $F_i$. The action of $\sigma_i$ on $K$ is depicted in Figure \ref{figure:mutation}.

Assume we have a (dg enhancement of a) triangulated category $\mathscr{T}$ which is equipped with a semiorthogonal decomposition $\langle \mathscr{A}_1,\dots, \mathscr{A}_{n}\rangle$ and a spherical functor $F: \mathscr{T} \rightarrow \mathscr{C}$, and induced spherical functors $F_i:\mathscr{A}_i \rightarrow \mathscr{C}$. Then we may {\it mutate} this semiorthogonal decomposition so as to change the order of $\mathscr{A}_i$ and $\mathscr{A}_{i+1}$. This then produces new spherical functors $F_i' : \mathscr{A}_i \rightarrow \mathscr{C}$ and a new $K$-coordinatized perverse schober by Corollary \ref{cor:extw}. We will prove that the relation between $F_i$ and $F_i'$ is precisely as described by Kapranov and Schechtman. For the rest of this section, we will operate exclusively with triangulated categories.

Mutations are defined as follows. Assume that we have a semiorthogonal decompositions $\mathscr{T} = \langle \mathscr{A},\prescript{\perp}{}{\mathscr{A}} \rangle$ and $\langle \mathscr{A}^\perp, \mathscr{A} \rangle$. Then there is an embedding functor $\alpha : \mathscr{A} \rightarrow \mathscr{T}$ with right and left adjoints $\alpha^!$ and $\alpha^*$. The right and left mutation functors of $\mathscr{T}$ are given by
\[
\alpha \alpha^!(a) \rightarrow a \rightarrow \mathit{L}_{\mathscr{A}}(a), \qquad R_\mathscr{A}(a) \rightarrow a \rightarrow \alpha \alpha^*(a).
\]
The functors $L_\mathscr{A}$ and $R_\mathscr{A}$ send $\mathscr{T}$ to $\mathscr{T}$, and specifically map $\mathscr{A}$ onto the subcategories $\mathscr{A}^\perp$ and $\prescript{\perp}{}{\mathscr{A}}$ respectively. Furthermore, they give mutually inverse equivalences between $\mathscr{A}^\perp$ and $\prescript{\perp}{}{\mathscr{A}}{}{}$. If we have a semiorthogonal decomposition $\mathscr{T} = \langle \mathscr{A}_1,\dots ,\mathscr{A}_k\rangle$ we may perform mutations at a specific component of this semiorthogonal decomposition, by noting that we also have semiorthogonal decompositions
\[
\langle \mathscr{A}_1,\dots, \langle \mathscr{A}_i, \mathscr{A}_{i+1} \rangle, \dots , \mathscr{A}_k \rangle
\]
Therefore, we can define
\[
L_{\mathscr{A}_i}  \langle \mathscr{A}_1,\dots , \mathscr{A}_k \rangle
\]
to be the semiorthogonal decomposition obtained by replacing the semiorthogonal summand $\langle \mathscr{A}_i,\mathscr{A}_{i+1}\rangle$ with $\langle L_{\mathscr{A}_i}\mathscr{A}_{i+1},\mathscr{A}_i \rangle$, and similarly, we define
\[
R_{\mathscr{A}_{i+1}} \langle \mathscr{A}_1,\dots, \mathscr{A}_k \rangle
\]
to be the semiorthogonal decomposition obtained by replacing the semiorthogonal summand $\langle \mathscr{A}_i, \mathscr{A}_{i+1} \rangle$ with the new semiorthogonal summand $\langle \mathscr{A}_{i+1}, R_{\mathscr{A}_{i+1}}(\mathscr{A}_i) \rangle$.
\begin{proposition}\label{prop:mutat}
Assume that we have a semiorthogonal decomposition $\mathscr{T} = \langle \mathscr{A}, \mathscr{B} \rangle$ and a spherical functor $F:\mathscr{T} \rightarrow \mathscr{C}$ so that $C_F = \mathsf{S}_{\mathscr{T}}[\ell]$ for some integer $\ell$. Let $\alpha: \mathscr{A} \rightarrow \mathscr{T}$ and $\beta: \mathscr{B} \rightarrow \mathscr{T}$ be the natural embeddings and let $F_\alpha : \mathscr{A} \rightarrow \mathscr{C}$ and $F_\beta : \mathscr{B} \rightarrow \mathscr{C}$ be the spherical functors from $\mathscr{A}$ and $\mathscr{B}$ obtained from Proposition \ref{prop:add}. Define $T_\alpha, T_\beta,C_\alpha$ and $C_\beta$ as the corresponding twist and cotwist functors. Then
\[
F L_\mathscr{A} \beta \cong T_\alpha  F_\beta, \qquad F R_\mathscr{B}  \alpha \cong T^{'}_\beta F_\alpha.
\]
\end{proposition}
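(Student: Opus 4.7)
The plan is to use the hypothesis on the cotwist, together with Serre duality, to replace the adjoints of the embeddings $\alpha$ and $\beta$ by the adjoints of the spherical functor $F$ itself. This reduces each mutation cone to a cone of adjunction counits/units for $F$, which is exactly what the twist functors $T_\alpha, T'_\beta$ compute.

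For the first identity $FL_\mathscr{A}\beta \cong T_\alpha F_\beta$, I would begin by noting that $L_\mathscr{A}\beta(b)$ sits in the defining triangle $\alpha\alpha^!\beta(b) \to \beta(b) \to L_\mathscr{A}\beta(b)$, which $F$ sends to
\[
F\alpha\alpha^!\beta(b) \longrightarrow F_\beta(b) \longrightarrow FL_\mathscr{A}\beta(b),
\]
whereas $T_\alpha F_\beta(b)$ is the cone of the counit $F_\alpha R_\alpha F_\beta(b) \to F_\beta(b)$, where the right adjoint of $F_\alpha = F\alpha$ is the composite $R_\alpha = \alpha^! R$. It therefore suffices to produce a natural equivalence $\alpha^!\beta \xrightarrow{\sim} \alpha^! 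RF\beta = R_\alpha F_\beta$ intertwining the two natural maps down to $\beta$ (respectively to $RF\beta$).

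This equivalence will come from the cotwist triangle $C \to \mathrm{id}_\mathscr{T} \to RF$: applying $\alpha^!$ to its evaluation at $\beta(b)$ yields the triangle $\alpha^! C\beta(b) \to \alpha^!\beta(b) \to R_\alpha F_\beta(b)$. Since $C \cong \mathsf{S}_\mathscr{T}[k]$, Serre duality together with the semiorthogonality $\Hom_\mathscr{T}(\mathscr{B},\mathscr{A}) = 0$ gives
\[
\Hom_\mathscr{A}(a, \alpha^! \mathsf{S}_\mathscr{T}\beta(b)) \cong \Hom_\mathscr{T}(\alpha(a), \mathsf{S}_\mathscr{T}\beta(b)) \cong \Hom_\mathscr{T}(\beta(b), \alpha(a))^\vee = 0
\]
for every $a \in \mathscr{A}$, so $\alpha^! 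C\beta(b) = 0$ and the middle arrow is an equivalence. To finish, a short diagram chase using the triangular identity $\varepsilon_F F \cdot F\eta_F = \mathrm{id}_F$ for $F \dashv R$ shows that $F$ applied to the counit $\alpha\alpha^!\beta \to \beta$ coincides with the counit of the composite adjunction $F\alpha \dashv \alpha^! R$ at $F\beta(b)$, and the matching of cones in a triangulated category gives $FL_\mathscr{A}\beta \cong T_\alpha F_\beta$.

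The second identity $FR_\mathscr{B}\alpha \cong T'_\beta F_\alpha$ follows by the dual argument. Here $R_\mathscr{B}\alpha(a)$ is the fibre of the unit $\alpha(a) \to \beta\beta^*\alpha(a)$ and $T'_\beta F_\alpha(a)$ is the fibre of the unit $F_\alpha(a) \to F_\beta L_\beta F_\alpha(a)$ of $L_\beta \dashv F_\beta$, with $L_\beta = \beta^* L$. Using the triangle $LF \to \mathrm{id}_\mathscr{T} \to C'$ and the fact that $C'$ is inverse to $C$, hence equivalent to $\mathsf{S}_\mathscr{T}^{-1}[-k]$, the analogous Serre-duality calculation
\[
\Hom_\mathscr{T}(\mathsf{S}_\mathscr{T}^{-1}\alpha(a), \beta(b))^\vee \cong \Hom_\mathscr{T}(\beta(b), \alpha(a)) = 0
\]
gives $\beta^* C'\alpha(a) = 0$ and $\beta^* LF\alpha \xrightarrow{\sim} \beta^*\alpha$; applying $F\beta$ and matching units via the triangular identity for $L \dashv F$ concludes. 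The hard part will not be the Serre-duality vanishings, which are immediate from the hypothesis and semiorthogonality, but the bookkeeping required to verify that the natural map supplied by the cotwist triangle really is the counit (respectively unit) appearing in the twist functor. Once that compatibility is checked via the triangular identities of $F \dashv R$ and $L \dashv F$, the matching of cones in a triangulated category completes the proof.
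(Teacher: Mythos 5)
Your proof is correct, and it takes a genuinely different route from the paper. The paper proceeds by stacking three external facts: the Kuznetsov--Perry identities $L_\mathscr{A}\beta \cong \mathsf{S}_\mathscr{T}\beta\mathsf{S}_\mathscr{B}^{-1}$ and $R_\mathscr{B}\alpha \cong \mathsf{S}_\mathscr{T}^{-1}\alpha\mathsf{S}_\mathscr{A}$, Addington's commutation relation $FC_F \cong T_F F[-2]$, and the twist factorization $T_F \cong T_\alpha T_\beta$ from Aspinwall--Addington, then cancels $T_\beta T'_\beta$. You instead compare the defining triangles directly: applying $\alpha^!$ to the cotwist triangle $C \to \mathrm{id} \to RF$ at $\beta(b)$, you use Serre duality and semiorthogonality to kill $\alpha^! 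C\beta$ and thereby identify $\alpha^!\beta \xrightarrow{\sim} R_\alpha F_\beta$, and the triangular identity $\varepsilon_F F \cdot F\eta_F = \mathrm{id}_F$ together with naturality of $\varepsilon_\alpha$ shows this isomorphism intertwines the two counit maps, so the cones match. This avoids the twist factorization entirely and is more self-contained, at the cost of the diagram chase; the paper's route is shorter on the page but imports heavier machinery. One thing worth flagging if you write this up: strictly speaking ``matching of cones in a triangulated category'' is not functorial, so to get an isomorphism of \emph{functors} rather than an object-by-object one you should carry out the identification at the level of the dg (or bimodule) enhancement, which is exactly the setting in which the paper is working -- your use of natural transformations and triangular identities does lift to that level without further difficulty. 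Also note a small imprecision: your Serre-duality displays drop the shift $[k]$ coming from $C_F \cong \mathsf{S}_\mathscr{T}[k]$ and $C' \cong \mathsf{S}_\mathscr{T}^{-1}[-k]$, but since a shift does not affect the semiorthogonal vanishing this is harmless.
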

\begin{proof}
It is known \cite[Remark 2.11]{kperry} that there are isomorphisms of functors
\[
L_\mathscr{A}\beta \cong \mathsf{S}_\mathscr{T} \beta \mathsf{S}_\mathscr{B}^{-1}, \qquad R_\mathscr{B} \alpha \cong \mathsf{S}_\mathscr{T}^{-1} \alpha \mathsf{S}_\mathscr{A}.
\]
The rest of the proposition is provided by compatibility conditions between spherical functors, Serre functors and twist and cotwist functors. By construction, we have that $C_F \cong \mathsf{S}_\mathscr{T}$. The fact that $F$ is spherical implies that $FC_F \cong T_FF[-2]$ (see \cite[Section 1.3]{add}). Consequently, $F\mathsf{S}_\mathscr{T} \cong FC_F[-\ell] \cong T_FF[\ell-2]$. Therefore,
\[
FL_\mathscr{A} \beta \cong T_F F\beta \mathsf{S}_\mathscr{B}^{-1}[\ell-2] \cong T_F  F_\beta \mathsf{S}_\mathscr{B}^{-1}[\ell-2]
\]
Since $F_\beta$ is also spherical with cotwist $\mathsf{S}_\mathscr{B}[\ell]$ by Proposition \ref{prop:add}, it follows that $\mathsf{S}_\mathscr{B}^{-1}[\ell-2] \cong C_\beta'[-2]$. We also have that $F_\beta C_\beta' \cong T_\beta F_\beta[2]$ (see \cite[Section 1.3]{add}). Thus,
\[
F_\beta \mathsf{S}_\mathscr{B}^{-1}[\ell-2] \cong F_\beta C_\beta'[-2] = T_\beta' F_\beta.
\]
It follows that $FL_\mathscr{A}\beta \cong T_FT_\beta' F_\beta$. According to \cite[Theorem 11]{asp-add}, we have $T_F \cong T_\alpha T_\beta$. Hence $FL_\mathscr{A} \beta \cong T_\alpha F_\beta$.

The second statement in the proposition is proved similarly.
\end{proof}

\begin{corollary}
If $\mathscr{S}_K(\mathscr{C},\mathscr{A}_i,F_i)$ is the perverse schober obtained from the construction in Corollary \ref{cor:extw}, then $\sigma_j\mathscr{S}_K(\mathscr{C},\mathscr{A}_i,F_i)$ and $\mathscr{S}_K(\mathscr{C},\mathscr{A}_i,F_i)$ have quasiequivalent categories of global sections.
\end{corollary}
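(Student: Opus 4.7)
The plan is to realize the action of $\sigma_j$ on the perverse schober as a right mutation on the underlying semiorthogonal decomposition of $\mathscr{T}$, and then invoke Corollary \ref{cor:extw} twice: once for the original semiorthogonal decomposition and once for the mutated one. Since both semiorthogonal decompositions describe the \emph{same} category $\mathscr{T}$ with the \emph{same} ambient spherical functor $F$, the two global-sections categories are automatically quasi equivalent to $\mathscr{T}$, and hence to each other.

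More precisely, I would first note that by Corollary \ref{cor:extw} we have $\Gamma \mathscr{S}_K(\mathscr{C},\mathscr{A}_i,F_i)\simeq \mathscr{T}$, where $\mathscr{T}=\langle \mathscr{A}_1,\dots,\mathscr{A}_n\rangle$ and $F\colon\mathscr{T}\to\mathscr{C}$ is the ambient spherical functor whose cotwist induces $\mathsf{S}_{\mathscr{T}}[k]$. Performing the right mutation $R_{\mathscr{A}_{j+1}}$ produces the new semiorthogonal decomposition
\[
\langle \mathscr{A}_1,\dots,\mathscr{A}_{j-1},\mathscr{A}_{j+1},R_{\mathscr{A}_{j+1}}(\mathscr{A}_j),\mathscr{A}_{j+2},\dots,\mathscr{A}_n\rangle
\]
of the \emph{same} category $\mathscr{T}$. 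The embedding of the new component $R_{\mathscr{A}_{j+1}}(\mathscr{A}_j)\hookrightarrow\mathscr{T}$ is given by $R_{\mathscr{A}_{j+1}}\circ\alpha_j$, while all other embeddings are unchanged (up to relabeling). Composing with $F$ and applying Proposition \ref{prop:mutat} shows that the new spherical functors are
\[
F_j'\;=\;F_{j+1},\qquad F_{j+1}'\;=\;F\circ R_{\mathscr{A}_{j+1}}\circ\alpha_j\;\cong\;T'_{j+1}F_j,
\]
and $F_i'=F_i$ for $i\neq j,j+1$. These are precisely the functors appearing in $\sigma_j\mathscr{S}_K(\mathscr{C},\mathscr{A}_i,F_i)$ as defined at the beginning of Section \ref{Sect:mutation}.

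Next, one must verify that the mutated semiorthogonal decomposition still satisfies the hypothesis of Corollary \ref{cor:extw}: namely, that the restrictions of $F$ to each new summand remain spherical with cotwist a shift of the Serre functor. This is immediate from Proposition \ref{prop:add}, since the new summands are again admissible subcategories of $\mathscr{T}$ and the ambient functor $F$ is unchanged. The skeleton underlying $\sigma_j\mathscr{S}_K$ is $\sigma_j(K)$, and by the definition of the Kapranov--Schectman braid action this is the skeleton of the mutated schober (compare Figure \ref{figure:mutation}). Hence Corollary \ref{cor:extw} applied to the mutated semiorthogonal decomposition yields $\Gamma\sigma_j\mathscr{S}_K(\mathscr{C},\mathscr{A}_i,F_i)\simeq \mathscr{T}$, giving the required quasi equivalence.

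The main subtlety is really just bookkeeping: matching the explicit formula $F_{j+1}'=T'_{j+1}F_j$ of the braid action with the formula coming from mutation $FR_{\mathscr{A}_{j+1}}\alpha_j\cong T'_{j+1}F_j$ of Proposition \ref{prop:mutat}, and checking that this identification is an equivalence of the full data of a $K$-coordinatized perverse schober (and not merely of the individual functors). Once this matching is in place, the result is a formal consequence of Corollary \ref{cor:extw}; the case of left mutation is symmetric and handles $\sigma_j^{-1}$.
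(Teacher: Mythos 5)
Your proof is correct and follows the same route the paper takes, namely deducing the statement from Proposition \ref{prop:mutat} together with two applications of Corollary \ref{cor:extw}; the paper leaves this reasoning implicit (it states the corollary immediately after Proposition \ref{prop:mutat} with no separate proof), and you have simply spelled it out. Your identification of the braid move $\sigma_j$ with the right mutation $R_{\mathscr{A}_{j+1}}$ and the bookkeeping match $F R_{\mathscr{A}_{j+1}}\alpha_j \cong T'_{j+1}F_j$ is exactly the intended content.
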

\begin{remark}
If $F_i : \mathscr{A}_i \rightarrow \mathscr{C}$ is an ordered collection of spherical functors with a weak right relative Calabi--Yau structure then the category $\mathscr{C}$ is Calabi--Yau and the cotwist of $F_i$ is the Serre functor on $\mathscr{A}_i$. Work of Katzarkov, Pandit and Spaide \cite{kps} shows that if $F_i$ have weak right Calabi--Yau structures, then the category of global sections $\mathscr{A} = \Gamma\mathscr{S}_K(\mathscr{C},\mathscr{A}_i,F_i)$ of the corresponding perverse schober also admits a functor $F:\mathscr{A} \rightarrow \mathscr{C}$ which has a weak right Calabi--Yau structure.  Therefore, we may apply Proposition \ref{prop:mutat} to see that Kapranov and Schechtman's action on a perverse schober constructed from functors with weak right relative Calabi--Yau structures leaves the category of global sections invariant.

\end{remark}

\section{Type II degenerations of Calabi--Yau varieties}\label{sect:typeII}

In this section, we draw a connection between certain perverse sheaves of categories on the punctured disc and categories of perfect complexes on type II degenerations of Calab-i-Yau varieties. This incorporates information from Section \ref{sect:recon} and \ref{sect:sod}.

\subsection{Sheaves of categories and blowing up}\label{sect:scatblow}

In the next section, we will let $V$ be a smooth projective variety, $S$ a smooth, effective anticanonical divisor in $V$ and $Z$ a smooth codimension one subvariety of $S$. We let $\widetilde{V}$ be the blow up of $V$ in $Z$. We would like to prove that there are perverse sheaves of categories which encode the data of $\widetilde{V}$. Since $Z$ is contained in a smooth anticanonical divisor $S$, the proper transform of $S$ is isomorphic to $S$ and is still anticanonical in $\widetilde{V}$. Let us take $j:Z\hookrightarrow S$ and let $k: S \hookrightarrow V$. We also let $\widetilde{k}$ be the embedding $\widetilde{S} \hookrightarrow \widetilde{V}$, so if $\pi:\widetilde{V} \rightarrow V$ is the blow up map, then $\pi\cdot \widetilde{k} = k$.
\begin{proposition}\label{prop:todiv}
There is a quasiequivalence of dg categories between $\DD^b_{\dg}(\widetilde{V})$  and
\[
\DD^b_{\dg}(V) \times_{\mathbb{L}k^*, \mathbb{R}j_*} \DD^b_{\dg}(Z).
\]
so that the quasifunctor $c_{\mathbb{L}k^*,\mathbb{R}j_*}$ to $\DD^b_{\dg}(S)$ is quasiisomorphic as a bimodule to $\mathbb{L}\widetilde{k}^*$. 
\end{proposition}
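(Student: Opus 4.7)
The plan is to combine Orlov's blowup formula with the Kuznetsov--Lunts gluing framework from Section \ref{sect:bg}. Since $Z$ has codimension two in $V$ (being a divisor inside the divisor $S$), the exceptional divisor $E$ of $\pi$ is a $\mathbb{P}^1$-bundle $q \colon E \to Z$, and Orlov's theorem provides a semiorthogonal decomposition
\[
\DD^b_{\dg}(\widetilde{V}) = \langle \mathbb{L}\pi^*\DD^b_{\dg}(V),\; \mathbb{R}i_{E*}\mathbb{L}q^*\DD^b_{\dg}(Z) \rangle,
\]
where $i_E \colon E \hookrightarrow \widetilde{V}$. By Corollary \ref{cor:obvs} (specialized to this two-step semiorthogonal decomposition), $\DD^b_{\dg}(\widetilde{V})$ is quasi equivalent to the glued category $\DD^b_{\dg}(V) \times_\psi \DD^b_{\dg}(Z)$, where $\psi$ is the $\DD^b_{\dg}(V)$--$\DD^b_{\dg}(Z)$ bimodule $\psi(b,a) = \hom_{\DD^b_{\dg}(\widetilde{V})}(\mathbb{L}\pi^*(a), \mathbb{R}i_{E*}\mathbb{L}q^*(b))$. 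By Proposition \ref{ref:klprop2} it then suffices to produce a quasi isomorphism of bimodules between $\psi$ and $\mathsf{S}_{\mathbb{L}k^*,\mathbb{R}j_*}$.

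To compute $\psi$, I would chain together three adjunctions. First, $(\mathbb{L}i_E^*,\mathbb{R}i_{E*})$-adjunction combined with the factorization $\pi \cdot i_E = k \cdot j \cdot q$ identifies $\psi(b,a)$ with $\hom_{\DD^b_{\dg}(E)}(\mathbb{L}q^*\mathbb{L}j^*\mathbb{L}k^*(a), \mathbb{L}q^*(b))$. Second, $(\mathbb{L}q^*,\mathbb{R}q_*)$-adjunction together with the projection formula and the identity $\mathbb{R}q_*\mathscr{O}_E = \mathscr{O}_Z$ (valid because $q$ is a $\mathbb{P}^1$-bundle) yields $\mathbb{R}q_*\mathbb{L}q^*(b) \cong b$, reducing to $\hom_{\DD^b_{\dg}(Z)}(\mathbb{L}j^*\mathbb{L}k^*(a), b)$. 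Third, $(\mathbb{L}j^*, \mathbb{R}j_*)$-adjunction produces $\hom_{\DD^b_{\dg}(S)}(\mathbb{L}k^*(a), \mathbb{R}j_*(b))$, which is precisely $\mathsf{S}_{\mathbb{L}k^*,\mathbb{R}j_*}(b,a)$.

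For the compatibility statement, I would use the cartesian square
\[
\begin{CD}
Z @>\ell>> E \\
@VjVV @Vi_EVV \\
\widetilde{S} @>\widetilde{k}>> \widetilde{V}
\end{CD}
\]
in which $\widetilde{S} \cong S$ meets the exceptional divisor $E$ transversely along $Z$ (transversely because $\mathcal{N}_{Z/S}$ is a line bundle, so $\mathbb{P}_Z(\mathcal{N}_{Z/S}) \cong Z$). Under the Kuznetsov--Lunts gluing equivalence a triple $(a,b,\mu)$ corresponds to the object $\cone(\mathbb{R}i_{E*}\mathbb{L}q^*(b)[-1] \to \mathbb{L}\pi^*(a)) \in \DD^b_{\dg}(\widetilde{V})$. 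Applying $\mathbb{L}\widetilde{k}^*$, the left-hand term becomes $\mathbb{L}k^*(a)$ (since $\pi|_{\widetilde{S}}$ is an isomorphism and $\pi\widetilde{k} = k$) while the right becomes $\mathbb{R}j_*(b)$ by flat base change along the square above; the connecting map is then identified with $\mu$ via the adjunction chain in the previous paragraph. This exhibits $\mathbb{L}\widetilde{k}^*$ as $c_{\mathbb{L}k^*,\mathbb{R}j_*}$ at the level of bimodules.

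The main obstacle will be lifting the pointwise isomorphisms produced by the adjunction chain to a genuine quasi isomorphism of dg bimodules rather than merely a functorial isomorphism of homotopy categories. This requires fixing explicit h-flat models and lifting the units and counits of the three adjunctions to the dg level, much as in the proof of Proposition \ref{prop:cohd}; everything else is a bookkeeping exercise once the projection formula, $\mathbb{R}q_*\mathscr{O}_E = \mathscr{O}_Z$, and the flat base-change identity are available at the dg level.
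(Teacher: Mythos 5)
Your argument is correct and takes a genuinely different route from the paper's, so it is worth spelling out where they diverge. After invoking Orlov's blowup decomposition and the Kuznetsov--Lunts gluing (note: this first step needs only \cite[Proposition 4.10]{kl}, not Corollary~\ref{cor:obvs} — the latter is the statement about passing hom's through a spherical functor, which you never actually use), the paper identifies the internal-hom bimodule $\psi$ with $\mathsf{S}_{\mathbb{L}k^*,\mathbb{R}j_*}$ by first applying Corollary~\ref{cor:obvs} to replace hom's in $\DD^b_{\dg}(\widetilde{V})$ by hom's in $\DD^b_{\dg}(S)$ computed through the spherical functor $\mathbb{L}\widetilde{k}^*$ (this is where the standing hypothesis that $S$ is anticanonical in $V$, and hence $\widetilde{S}$ anticanonical in $\widetilde V$, enters), and then invoking Kuznetsov's Fourier--Mukai kernel identification \cite[Corollary 2.26]{kuzhyp} to simplify $\mathbb{L}\widetilde{k}^*\mathbb{R}q_*\mathbb{L}p^*$ to $\mathbb{R}j_*$. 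You instead compute $\psi$ directly with the three-step adjunction chain through $E$, $Z$, and $S$, using only the factorization $\pi\cdot i_E = k\cdot j\cdot q$, the projection formula, and $\mathbb{R}q_*\mathscr{O}_E \cong \mathscr{O}_Z$. Your chain is precisely a concrete, geometric instance of what Lemma~\ref{lemma:glue} asserts abstractly, and — interestingly — it never visibly uses the anticanonical hypothesis on the bimodule side; the paper's route, by contrast, leans on that hypothesis to invoke the spherical-functor machinery, which is the whole point of the section. For the compatibility of $c_{\mathbb{L}k^*,\mathbb{R}j_*}$ with $\mathbb{L}\widetilde{k}^*$ your Tor-independent base-change argument in the cartesian square (call it that, not flat base change, since $\widetilde{k}$ is a closed immersion and not flat) is essentially the functor-level content of the Kuznetsov kernel identification the paper cites.

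The one place where your proposal is genuinely lighter than a complete proof — and you already flag it — is the promotion of the adjunction isomorphisms from functorial quasi-isomorphisms of complexes to a quasi-isomorphism of dg bimodules. This is precisely the work that the paper offloads onto Lemma~\ref{lemma:glue} (which is carried out with explicit cones of units of adjunction) and onto Kuznetsov's kernel isomorphism (which is already a statement at the level of objects of $\DD^b(\mathbb{P}_Z(N_{Z/V})\times S)$ and hence dg/bimodule-robust). Calling it "a bookkeeping exercise" undersells it slightly: each adjunction and the projection formula must be chosen compatibly as quasi-functors, as in the proof of Proposition~\ref{prop:cohd}. But the approach is sound, and once those lifts are supplied it recovers the proposition, arguably more transparently than the paper.
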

\begin{proof}
From \cite{orl-monoid}, we have a semiorthogonal decomposition,
\[
\DD^b(\widetilde{V}) = \langle \mathbf{L}\pi^*\DD^b(V), \mathbf{R}q_*\mathbf{L}p^*\DD^b(Z) \rangle,
\]
where $p : \mathbb{P}_Z(N_{Z/V}) \rightarrow Z$ is the obvious $\mathbb{P}^1$ fibration and $q : \mathbb{P}_Z(N_{Z/V}) \hookrightarrow \widetilde{V}$ is the induced closed embedding.

Therefore, by \cite{kl} there is a quasiequivalence between $\DD^b_{\dg}(\widetilde{V})$ and $\DD^b_{\dg}(V) \times_{\mathbb{L}\pi^*,\mathbb{R}p^*\mathbb{L}q_*} \DD^b_{\dg}(Z)$. We can use Corollary \ref{cor:obvs} to see that this category is quasiequivalent to
\[
\DD^b(V) \times_{\mathbb{L}\widetilde{k}^*\mathbb{L}\pi^*, \mathbb{L}\widetilde{k}^* \mathbb{R}q_* \mathbb{L}p^*} \DD^b_{\dg}(Z)
\]
since $S$ is anticanonical in $\widetilde{V}$. Then we have that $\pi \cdot \widetilde{k} = k$, so $\mathbb{L}\widetilde{k}^* \mathbb{L}\pi^* = \mathbb{L}k^*$. Furthermore, we have a cartesian diagram
\[
\begin{tikzcd}
Z \ar[r,"\sigma"] \ar[d,"j"]& \mathbb{P}_Z(N_{Z/V})\ar[d,"q"] \\
S \ar[r,"\widetilde{k}"]& \widetilde{V}
\end{tikzcd}
\]
In \cite[Corollary 2.26]{kuzhyp}, Kuznetsov constructs objects $K_{\widetilde{k},q}$ and $K^{j, \sigma}$ of $\DD^b(\mathbb{P}_Z(N_{Z/V}) \times S)$ whose corresponding Fourier-Mukai transforms from $\DD^b(\mathbb{P}_Z(N_{Z/V})) \rightarrow \DD^b(Z)$ represent $\mathbf{L}\widetilde{k}^*\mathbf{R}q_*$ and $\mathbf{R}j_* \mathbf{L}\sigma^*$ respectively. In {\it loc. cit.}, Kuznetsov shows that there is an isomorphism $K_{\widetilde{k},q} \rightarrow K^{j,\sigma}$. This implies that there is a quasiisomorphism between the dg functors $\mathbb{R}j_* \mathbb{L}\sigma^*$ and $\mathbb{L}\widetilde{k}^* \mathbb{R}q_*$ as dg bimodules. Therefore, since $p\cdot \sigma = \mathrm{id}_Z$, we have that $\DD^b_{\dg}(\widetilde{V})$ is quasiequivalent to
\[
\DD^b_{\dg}(V)\times_{\mathbb{L}k^*, \mathbb{R}j_*}\DD^b_{\dg}(Z)
\]
as expected. By construction and Propositions \ref{ref:klprop2} and \ref{ref:klprop3}, the induced quasifunctor to $\DD^b_{\dg}(S)$ is quasiisomorphic to $\mathbb{L}\widetilde{k}^*$ as a bimodule.
\end{proof}
This means that the process of blowing up along a smooth anticanonical divisor not only can be recast as categorical gluing, but it can be recast as taking the category of global sections of a perverse schober, since the functors $\mathbb{L}k^*$ and $\mathbb{R}j_*$ are spherical. 
\begin{remark}
We may apply this proposition recursively. Let $Z_1,\dots , Z_n$ be smooth codimension 1 subvarieties in $S$, $j_i: Z_i \hookrightarrow V$ the corresponding closed embeddings, and let $\mathrm{Bl}_{Z_1,\dots, Z_n}(V)$ denote the blow up of $V$ in the proper image of $Z_1$, followed by the blow up in the proper image of $Z_2$ etc.. Then there is a perverse schober on $K_{n+1}$ associated to the spherical functors $\mathbb{L}k^*:\DD^b_{\dg}(V) \rightarrow \DD^b_{\dg}(S)$ and $\mathbb{R}j_i^* : \DD^b_{\dg}(Z_i) \rightarrow \DD^b_{\dg}(S)$ whose category of global sections is quasiequivalent to $\DD^b_{\dg}(\mathrm{Bl}_{Z_1,\dots, Z_n}(V))$.

\end{remark}
Now we note that this construction can be simplified further in the case where $S$ decomposes into a pair of disjoint smooth connected components one of which contains $Z$. The prototype of this situation is when $V = \mathbb{P}^1 \times S$, and $S$ is a Calabi--Yau manifold. In this case $(\{0 \} \times S) \sqcup (\{ \infty\} \times S)$ is a smooth, effective anticanonical divisor in $V$ composed of two disjoint subvarieties.
\begin{proposition}\label{prop:twosections}
Let $S_1$ and $S_2$ be a pair of smooth disjoint divisors in a variety $V$ so that $S_1 \sqcup S_2$ is the vanishing locus of a section of $-K_V$, and let $k_\ell : S_\ell \hookrightarrow V$ be the corresponding closed embeddings. Let 
$Z$ be a smooth, effective divisor in $S_1$ with closed embedding map $i$, and let $\widetilde{V} = \mathrm{Bl}_Z(V)$. Then there is a quasiequivalence between $\DD^b_{\dg}(\widetilde{V})$ and $\DD^b_{\dg}(V) \times_\mathsf{S}\DD^b_{\dg}(Z)$ where
\[
\mathsf{S}(b,a) = \hom_{\mathrm{mod}_{\DD^b_{\dg}(S_1)}}(\mathbb{L}k_1^*b, \mathbb{R}i_*a).
\]
Furthermore, the functor $\DD^b_{\dg}(V) \times_\mathsf{S} \DD^b_{\dg}(Z) \rightarrow \Tw \mathrm{mod}_{\DD^b_{\dg}(S_1)}$ sending
\[
(a,b,\mu) \mapsto \cone(\mu)
\]
is quasi-isomorphic to $\mathbb{L}\widetilde{k}_1^*$ and $(a,b,\mu) \mapsto \mathbb{L}{k}_2^*b$ is equivalent to $\mathbb{L}\widetilde{k}_2^*$, where $\widetilde{k}_\ell : S_\ell \hookrightarrow \widetilde{V}$ are the obvious embeddings.
\end{proposition}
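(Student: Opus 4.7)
The plan is to apply Proposition \ref{prop:todiv} directly, viewing the disjoint union $S := S_1 \sqcup S_2$ as a single smooth anticanonical divisor in $V$ which contains the smooth codimension-one subvariety $Z \subseteq S_1 \subseteq S$. Writing $k : S \hookrightarrow V$ and $j : Z \hookrightarrow S$ for the inclusions (so $k|_{S_\ell} = k_\ell$ and $j$ factors through $i : Z \hookrightarrow S_1$), Proposition \ref{prop:todiv} yields a quasi equivalence $\DD^b_{\dg}(\widetilde{V}) \simeq \DD^b_{\dg}(V) \times_{\mathbb{L}k^*, \mathbb{R}j_*} \DD^b_{\dg}(Z)$, under which the gluing cone quasifunctor to $\DD^b_{\dg}(S)$ is quasi isomorphic to $\mathbb{L}\widetilde{k}^*$, where $\widetilde{k} : S \hookrightarrow \widetilde{V}$ is the embedding of the proper transform of $S$. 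I am implicitly using that $S_2$ is untouched by the blow-up (since $Z \cap S_2 = \emptyset$), so the proper transform of $S$ is canonically $S_1 \sqcup S_2$.

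The next step is to simplify the gluing bimodule using the obvious quasi equivalence $\DD^b_{\dg}(S) \simeq \DD^b_{\dg}(S_1) \times \DD^b_{\dg}(S_2)$ coming from the disconnectedness of $S$. Under this identification $\mathbb{L}k^*$ becomes $(\mathbb{L}k_1^*, \mathbb{L}k_2^*)$ and $\mathbb{R}j_*$ becomes $(\mathbb{R}i_*, 0)$, the latter because $Z \subset S_1$ is disjoint from $S_2$. Computing hom spaces in the product category then gives
$$\hom_{\DD^b_{\dg}(S)}(\mathbb{L}k^*b,\, \mathbb{R}j_*a) \simeq \hom_{\DD^b_{\dg}(S_1)}(\mathbb{L}k_1^*b,\, \mathbb{R}i_*a) \,\oplus\, \hom_{\DD^b_{\dg}(S_2)}(\mathbb{L}k_2^*b,\, 0),$$
and the second summand vanishes, so the gluing bimodule is identified with $\mathsf{S}$ as in the proposition statement. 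Propositions \ref{ref:klprop2} and \ref{ref:klprop3} then formalise the resulting quasi equivalence of glued categories.

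For the identification of the pullback functors, one decomposes the cone $\cone(\mu)$ along the product decomposition of $\DD^b_{\dg}(S)$. On the $S_1$-component, the cone is $\cone(\mathbb{L}k_1^*a \to \mathbb{R}i_*b)$, and is identified with $\mathbb{L}\widetilde{k}_1^*$ by Proposition \ref{prop:todiv}. On the $S_2$-component, $\pi$ restricts to an isomorphism on a neighbourhood of $S_2$, and the relation $\pi \cdot \widetilde{k}_2 = k_2$ gives $\mathbb{L}\widetilde{k}_2^* \mathbb{L}\pi^* \simeq \mathbb{L}k_2^*$. Tracing a triple $(a,b,\mu)$ through the equivalence of Proposition \ref{prop:todiv} then shows that the image in $\DD^b_{\dg}(\widetilde{V})$ pulls back under $\widetilde{k}_2$ to $\mathbb{L}k_2^*a$ (up to a shift arising from the convention $\cone(\mathbb{L}k_2^*a \to 0) \simeq \mathbb{L}k_2^*a[1]$), as claimed.

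The principal obstacle is the bookkeeping required by splitting the gluing bimodule and the cone quasifunctor over a disconnected base: although each individual ingredient is elementary, one must trace the equivalence of Proposition \ref{prop:todiv} with care to be sure that Orlov's exceptional summand $\mathbf{R}q_*\mathbf{L}p^* \DD^b(Z)$ of $\DD^b(\widetilde{V})$ restricts to zero on $S_2$ (immediate, since the exceptional divisor is disjoint from $S_2$) and that the $\mathbf{L}\pi^* \DD^b(V)$ summand restricts along $\widetilde{k}_2$ to $\mathbf{L}k_2^*$. Once these two pieces are isolated, the remainder of the argument is formal and follows the same template as Proposition \ref{prop:todiv}.
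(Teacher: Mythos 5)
Your proof is correct and takes essentially the same route as the paper's: both apply Proposition \ref{prop:todiv} with $S = S_1 \sqcup S_2$, split the gluing bimodule and cone quasifunctor over $\DD^b_{\dg}(S_1) \times \DD^b_{\dg}(S_2)$, and use $Z \cap S_2 = \emptyset$ to kill the $S_2$-component of the bimodule. Your observation about the shift $\cone(\mathbb{L}k_2^*a \to 0) \simeq \mathbb{L}k_2^*a[1]$ is a genuine wrinkle the paper passes over in silence, though it is immaterial for the later applications since everything is used only up to quasi-isomorphism of bimodules.
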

\begin{proof}
The first part follows from Proposition \ref{prop:todiv}; we have that $\DD^b_{\dg}(\widetilde{V})$ is quasiequivalent to $\DD^b_{\dg}(V) \times_{\mathsf{S}'}\DD^b_{\dg}(Z)$ where
\[
\mathsf{S}'(a,b) = \hom_{\DD^b_{\dg}(S_1) \times \DD^b_{\dg}(S_2)}(\mathbb{L}k_1^*(b) \times \mathbb{L}k_2^*(b), \mathbb{R}i_*(a) \times 0),
\]
which is naturally isomorphic to $\mathsf{S}$. Since $Z \subset S_1$ is smooth of codimension 1, the proper transforms of $S_1$ and $S_2$, which are isomorphic to $S_1$ and $S_2$, are anticanonical in $V$. According to Corollary \ref{cor:obvs}, the functor
\[
\mathbb{L}\widetilde{k}_1^* \times \mathbb{L}\widetilde{k}_2^* : \DD^b_{\dg}(\widetilde{V}) \longrightarrow  \DD^b_{\dg}(S_1) \times \DD^b_{\dg}(S_2)
\]
sends $(a,b,\mu')$ for $\mu' \in \mathsf{S}'(b,a)$ to $\cone(\mu')$. Since $\mu'$ can be written as $\mu \times 0$ for $\mu \in \mathsf{S}(b,a)$, it follows then from Propositions \ref{ref:klprop2} and \ref{ref:klprop3} that $\mathbb{L}\widetilde{k}_1^*$ is quasiisomorphic to the functor sending $(a,b,\mu')$ to $\cone(\mu)$ and $\mathbb{L}\widetilde{k}_2^*$ is quasiisomorphic to the functor sending $(a,b,\mu')$ to $\mathbb{L}k_2^*a$.
\end{proof}

We can represent this situation as the diagram of categories which commutes up to homotopy;
\[
\begin{tikzcd}
\DD^b_{\dg}(\widetilde{V}) \ar[rd, "Q"] \ar[rrrd,bend left=15, "\mathbb{L}\widetilde{k}_2^*"] \ar[dddr, bend right, "\mathbb{L}\widetilde{k}_1^*"] & & & \\
& \DD^b_{\dg}(V) \times_{\mathbb{L}k^*,\mathbb{R}i_*} \DD^b_{\dg}(Z) \ar[r] \ar[d] & \DD^b_{\dg}(V) \times \DD^b_{\dg}(Z) \ar[d,"\mathbb{L}k_1^* \times \mathbb{R}i_*"] \ar[r,"\mathbb{L}k_2^* \times 0"]& \DD^b_{\dg}(S_2) \\
& A_2(\DD^b_{\dg}(S_1)) \ar[r,"f_1\times f_2"] \ar[d,"f_3"] & \DD^b_{\dg}(S_1) \times \DD^b_{\dg}(S_2) & \\
& \DD^b_{\dg}(S_1) & & 
\end{tikzcd}
\]
Where the square is homotopy cartesian, and arrows should be interpreted as quasifunctors and $Q$ is a quasifunctor which induces an equivalence in the homotopy category.

\subsection{Sections of $\mathbb{P}^1$ bundles}

Recall that in Section \ref{sect:Pbub}, when we constructed $\Gamma\mathscr{S}_\phi(\mathscr{C},\Phi)$, we use $f_1$ and $f_2$ to glue $A_2(\mathscr{C})$ to $A_2(\mathscr{C})$. Thus we are left with a pair of functors $g_1$ and $g_2$ inherited from $f_3$ which is depicted in the following diagram.
\[
\begin{tikzcd}
\mathscr{C} \times_{\mathsf{S}_\Phi} \mathscr{C} \ar [rd,"\widetilde{Q}"] &  & & \\
& \Gamma\mathscr{S}_\phi(\mathscr{C},\Phi) \ar[r,"m"] \ar[d,"n"]  \ar[rr, bend left , "g_2"] \ar[dd, bend right=60, "g_1"]& A_2(\mathscr{C})\ar[r,"f_3"] \ar[d,"f_1 \times \Phi\cdot f_2"]  & \Tw \mathscr{C} & \\
&A_2(\mathscr{C}) \ar[r,"f_1 \times f_2"] \ar[d,"f_3"] & \Tw \mathscr{C} \times \Tw \mathscr{C} &  \\
&\Tw \mathscr{C} && 
\end{tikzcd}
\]
so that $g_1 = f_3 \cdot n$ and $g_2 = f_3 \cdot m$. The functor $\widetilde{Q}$ is a quasiequivalence. The two copies of $\mathscr{C}$ in $\mathscr{C}\times_{\mathsf{S}_\Phi} \mathscr{C}$ map to $\mathscr{C}_1'$ and $\mathscr{C}_2'$ as defined before Proposition \ref{prop:ncP1}. It is easy to see how $g_1$ and $g_2$ behave on the level of the categories $\mathscr{C}'_1$ and $\mathscr{C}'_2$.

On the level of objects, these maps send
\[
g_1, g_2 : ((a,0,0) , (a,0,0), \mathrm{id}_a \times 0)) \mapsto a[1],a[1] , \quad g_1, g_2 : ((0,b,0) , (0,b,0), 0 \times \mathrm{id}_{b})) \mapsto \Phi(b),b.
\]
The behaviour with respect to homomorphisms is a little bit more subtle. We know that if $A,A' \in \mathscr{C}_1'$ are written as $((a,0,0),(a,0,0), \mathrm{id}_a\times 0)$ and $((a',0,0),(a',0,0),\mathrm{id}_{a'}\times 0)$ then 
\[
\hom_{\Gamma \mathscr{S}_\phi(\mathscr{C},\Phi)}(A,A') = \cone(\hom_\mathscr{C}(a,a')^2 \xrightarrow{(\mathrm{id}, -\mathrm{id})^T} \hom_\mathscr{C}(a,a'))
\]
The complex $\hom_{\mathscr{C}_1'}(A,A')$ is the diagonal subcomplex of $\hom_\mathscr{C}(a,a')^2$. One can check directly that $g_1$ and $g_2$ act on this subcomplex as the identity, sending $(f,f)$ to $f$. A similar statement is true for $\mathscr{C}_2'$.

Now we know that if $A$ is as above and $B \in \mathscr{C}_2$ is written as $((0,b,0),(0,b,0), 0 \times \mathrm{id}_b)$, then by the proof of Proposition \ref{prop:ncP1}, $\hom_{\Gamma\mathscr{S}(\mathscr{C},\Phi)}(A,B)$ is equal to 
\[
\hom_\mathscr{C}(a,b\oplus \Phi(b)) = \hom_\mathscr{C}(a,b) \oplus \hom_\mathscr{C}(a,\Phi(b)).
\]
The functor $g_1$ induces projection onto the first component, and $g_2$ induces projection onto the second component. Therefore, $f_3\cdot m \cdot \widetilde Q$ and $f_3 \cdot n \cdot \widetilde Q$ are the functors sending $(a,b, \mu \oplus \xi)$ to 
\[
\cone(\mu: a \rightarrow b) , \qquad \qquad \cone(\xi : a \rightarrow \Phi(b)).
\]
In the case considered in Theorem \ref{thm:orlov}, we can ask whether $g_1$ and $g_2$ have any geometric significance. There are two filtrations of $\mathscr{E} = \mathscr{O}_S \oplus \mathscr{L}$, by sub-bundles coming from the obvious embedding of $\mathscr{O}_S$ and $\mathscr{L}$ into $\mathscr{E}$, thus there are two sections of $\mathbb{P}_S(\mathscr{E})$ which we call $S_\mathscr{O}$ and $S_\mathscr{L}$, and we let $\sigma_\mathscr{O}$ and $\sigma_\mathscr{L}$ be their embeddings into $\mathbb{P}_S(\mathscr{E})$.
\begin{proposition}\label{prop:bundleblow}
Assume that $S$ is compact, smooth and Calabi--Yau. The functors $\mathbb{L}\sigma_\mathscr{O}^*$ and $g_1$ are quasiisomorphic as bimodules, as are $\mathbb{L}\sigma_\mathscr{L}^*$ and $g_2$.
\end{proposition}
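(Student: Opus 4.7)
The plan is to unwind the chain of quasi equivalences
\[
\Gamma\mathscr{S}_\phi(\DD^b_{\dg}(S),\Phi_\mathscr{L}) \xleftarrow{\Xi} \DD^b_{\dg}(S)\times_\mathsf{G}\DD^b_{\dg}(S) \longrightarrow \DD^b_{\dg}(\mathbb{P}_S(\mathscr{E}))
\]
produced in the proof of Theorem \ref{thm:orlov} and to identify each of the pairs $(g_1,\mathbb{L}\sigma_\mathscr{O}^*)$ and $(g_2,\mathbb{L}\sigma_\mathscr{L}^*)$ with the same explicit quasi functor out of the middle gluing. Under Orlov's equivalence on the right, a glued object $(a,b,\mu)$ with $\mu = \mu_1\oplus\mu_2 \in \hom_S(a,b)\oplus\hom_S(a,b\otimes\mathscr{L}) = \mathsf{G}(b,a)$ corresponds to $\cone(\mu\colon \mathbb{L}p^*a\to \mathbb{L}p^*b\otimes\mathscr{O}(1))$ in $\DD^b_{\dg}(\mathbb{P}_S(\mathscr{E}))$, via the adjunction identification $\hom_{\mathbb{P}_S(\mathscr{E})}(\mathbb{L}p^*a,\mathbb{L}p^*b\otimes\mathscr{O}(1)) \cong \hom_S(a,b\otimes\mathscr{E})$.

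On the perverse-sheaf side, the explicit formula for $\Xi$ from Proposition \ref{prop:ncP1} together with the definitions $g_1 = f_3\cdot n$ and $g_2 = f_3\cdot m$ gives immediately
\[
g_1\cdot\Xi(a,b,\mu_1\oplus\mu_2) = \cone(\mu_2\colon a\to\Phi(b)),\quad g_2\cdot\Xi(a,b,\mu_1\oplus\mu_2) = \cone(\mu_1\colon a\to b).
\]
On the geometric side, the Calabi-Yau hypothesis on $S$ together with the standard formula for the canonical bundle of a projective bundle gives $-K_{\mathbb{P}_S(\mathscr{E})} = S_\mathscr{O}+S_\mathscr{L}$, so that $S_\mathscr{O}\sqcup S_\mathscr{L}$ is a smooth anticanonical divisor and the combined pullback $\mathbb{L}\sigma_\mathscr{O}^*\times\mathbb{L}\sigma_\mathscr{L}^*\colon \DD^b_{\dg}(\mathbb{P}_S(\mathscr{E})) \to \DD^b_{\dg}(S)\times \DD^b_{\dg}(S)$ is spherical with cotwist equivalent to the Serre functor up to shift. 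Using $p\sigma_\bullet = \mathrm{id}_S$ together with the fact that $\sigma_\mathscr{O}$ and $\sigma_\mathscr{L}$ arise from the sub-line-bundles $\mathscr{O}_S,\mathscr{L}\hookrightarrow \mathscr{E}$, adjunction identifies the induced actions of $\mathbb{L}\sigma_\mathscr{O}^*$ and $\mathbb{L}\sigma_\mathscr{L}^*$ on $\hom(\mathbb{L}p^*a,\mathbb{L}p^*b\otimes\mathscr{O}(1))$ with the two canonical projections out of the direct-sum decomposition $\mathsf{G}(b,a) = \hom_S(a,b)\oplus\hom_S(a,b\otimes\mathscr{L})$. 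Since both pullbacks commute with cones, this gives $\mathbb{L}\sigma_\mathscr{O}^*\cone(\mu) \cong \cone(\mu_2)$ and $\mathbb{L}\sigma_\mathscr{L}^*\cone(\mu)\cong \cone(\mu_1)$, matching $g_1\cdot\Xi$ and $g_2\cdot\Xi$ on objects.

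To upgrade this object-level agreement to a quasi isomorphism of bimodules I would apply Corollary \ref{cor:obvs} to the spherical functor $F = \mathbb{L}\sigma_\mathscr{O}^*\times\mathbb{L}\sigma_\mathscr{L}^*$ and the admissible subcategory $\mathbb{L}p^*\DD^b_{\dg}(S)\subset \DD^b_{\dg}(\mathbb{P}_S(\mathscr{E}))$ with orthogonal $\mathbb{L}p^*\DD^b_{\dg}(S)\otimes\mathscr{O}(1)$. The corollary identifies $\DD^b_{\dg}(\mathbb{P}_S(\mathscr{E}))$ with the gluing $\DD^b_{\dg}(S)\times_\varphi \DD^b_{\dg}(S)$ along the bimodule $\varphi(b,a) = \hom_{\mathbb{P}_S(\mathscr{E})}(\mathbb{L}p^*a,\mathbb{L}p^*b\otimes\mathscr{O}(1)) \cong \mathsf{G}(b,a)$, and identifies $F$ with the cone functor $c_\varpi$ as a bimodule. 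Propositions \ref{ref:klprop2} and \ref{ref:klprop3} transfer these identifications through the zig-zag of quasi equivalences, and post-composing with the two projections of $\DD^b_{\dg}(S)\times\DD^b_{\dg}(S)$ onto its factors recovers $\mathbb{L}\sigma_\mathscr{O}^*$ and $\mathbb{L}\sigma_\mathscr{L}^*$; these then coincide as bimodules with $g_1\cdot\Xi$ and $g_2\cdot\Xi$ respectively by the object-level computation above.

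The main obstacle will be the conventional bookkeeping needed to verify that the decomposition $\mathsf{G}(b,a) = \hom_S(a,b)\oplus\hom_S(a,b\otimes\mathscr{L})$ pairs the correct summand with each of $\mathbb{L}\sigma_\mathscr{O}^*$ and $\mathbb{L}\sigma_\mathscr{L}^*$; this amounts to tracking $\sigma_\bullet^*\mathscr{O}(1)$ through the projectivization convention used in the excerpt, together with the relative shift inherent in passing from the abstract gluing to the pretriangulated envelope. Once the matching has been fixed, the remaining work is the formal propagation of the bimodule quasi isomorphisms through the compatibilities of Section \ref{sect:bg}.
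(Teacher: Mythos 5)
Your argument is essentially the paper's own: both identify $\DD^b_{\dg}(\mathbb{P}_S(\mathscr{E}))$ with the gluing $\DD^b_{\dg}(S)\times_{\mathsf{S}_\mathbb{P}}\DD^b_{\dg}(S)$ via Orlov's semiorthogonal decomposition, then invoke Corollary \ref{cor:obvs} for the anticanonical divisor $S_\mathscr{O}\sqcup S_\mathscr{L}$ and the identities $p\sigma_\bullet = \mathrm{id}_S$, $\sigma_\bullet^*\mathscr{O}_\mathbb{P}(1)\in\{\mathscr{O}_S,\mathscr{L}\}$, to recognize $\mathbb{L}\sigma_\mathscr{O}^*\times\mathbb{L}\sigma_\mathscr{L}^*$ as the cone functor on the two summands of the gluing bimodule. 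The bookkeeping concern you flag about which of $\mathscr{O}_S,\mathscr{L}$ pairs with which section under the projectivization convention is a real one (the paper's own statement $\mathbb{L}\sigma_\mathscr{O}^*\mathscr{O}_\mathbb{P}(1)\cong\mathscr{O}_S$ silently fixes a convention), but it is a labeling issue rather than a gap in the argument.
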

\begin{proof}
First, we remark that because of the conditions that we have placed on $S$, $S_\mathscr{O} \cup S_\mathscr{L}$ is a smooth anticanonical divisor on $\mathbb{P}_S(\mathscr{E})$. Let us denote by $p : \mathbb{P}_S(\mathscr{E}) \rightarrow S$ the natural projection map. We have a semiorthogonal decomposition,
\[
\DD^b(\mathbb{P}_S(\mathscr{O}_S \oplus \mathscr{L})) = \langle \mathbb{L}p^*\DD^b(S), \mathbb{L}p^* \DD^b(S) \otimes^\mathbb{L} \mathscr{O}_S(1) \rangle.
\]
Therefore, by the discussion preceding Corollary \ref{cor:obvs}, we get that $\DD^b_{\dg}(\mathbb{P}_S(\mathscr{O}\oplus \mathscr{L}))$ is quasiequivalent to
\[
\DD^b_\mathrm{dg}(S_\mathscr{L}) \times_{\mathsf{S}_\mathbb{P}}\DD^b_\mathrm{dg}(S_\mathscr{O})
\]
where 
\[
\mathsf{S}_\mathbb{P}(b,a) = \hom_{\DD^b_\mathrm{dg}(\mathbb{P}_S(\mathscr{E}))}(\mathbb{L}p^*a, \mathbb{L}p^*b \otimes^\mathbb{L}\mathscr{O}_{\mathbb{P}}(1)).
\]
According to Corollary \ref{cor:obvs}, this bimodule is isomorphic to
\begin{align*}
\mathsf{S}_{S_\mathscr{O}\cup S_\mathscr{L}}(a,b) = &\hom_{\DD^b_\mathrm{dg}(S_\mathscr{O})}(\mathbb{L}\sigma_\mathscr{O}^*\mathbb{L}p^*(b), \mathbb{L}\sigma_\mathscr{O}^*(\mathbb{L}p^*(a) \otimes^\mathbb{L} \mathscr{O}_\mathbb{P}(1)))\\
\oplus &\hom_{\DD^b_\mathrm{dg}(S_\mathscr{L})}(\mathbb{L}\sigma_\mathscr{L}^*\mathbb{L}p^*(b), \mathbb{L}\sigma_\mathscr{L}^*(\mathbb{L}p^*(a) \otimes^\mathbb{L} \mathscr{O}_\mathbb{P}(1))).
\end{align*}
Since $p\cdot \sigma_\mathscr{O} = p \cdot \sigma_\mathscr{L} = \mathrm{id}_S$ and $\mathbb{L}\sigma_\mathscr{O}^*\mathscr{O}_\mathbb{P}(1) \cong \mathscr{O}_S$ and $\mathbb{L}\sigma_\mathscr{O}^*\mathscr{O}_\mathbb{P}(1) \cong \mathscr{L}$ by \cite[V, Proposition 2.6]{hartshorne}, it follows that this bimodule is isomorphic to
\[
\mathsf{S}'_{S_\mathscr{O}\cup S_\mathscr{L}}(a,b) = \hom_{\DD^b_\mathrm{dg}(S)}(b,a) \oplus \hom_{\DD^b_\mathrm{dg}(S)}(b,a\otimes^\mathbb{L} \mathscr{L}). 
\]
Using Corollary \ref{cor:obvs}, the functor $\mathbb{L}\sigma_\mathscr{O}^* \times \mathbb{L}\sigma_\mathscr{L}^*$ is quasiisomorphic as a bimodule to the functor sending $(a,b, \mu \oplus \xi)$ to $\cone(\mu) \times \cone(\xi)$. This proves the result.
\end{proof}

\subsection{Gluing components together}

There are certain geometric situations in which one obtains normal crossings varieties whose components are either varieties with smooth anticanonical divisors or which are $\mathbb{P}^1$ bundles over a Calabi--Yau variety. Notably, these appear as particularly nice degenerations of Calabi--Yau varieties. We will now introduce a particularly nice class of examples of such degenerations.
\begin{defn}[A. Tyurin \cite{tyur}, N.-H. Lee\cite{lee}]
A \emph{Tyurin degeneration} of Calabi--Yau varieties is a smooth projective family $g:\mathscr{X} \rightarrow \Delta$ over an analytic disc $\Delta$ centered at 0 with variable $t$ which satisfies the following criteria.
\begin{enumerate}
\item The fiber over $t \neq 0 \in \Delta$ is a smooth projective Calabi--Yau manifold.\footnote{We follow the convention that $X$ is Calabi--Yau if its canonical bundle is trivial and $h^{i,0}(X) = 0$ for $0 < i < \dim X$.} 
\item The fiber over $0$ is normal crossings and consists of a pair of divisors $Y_0$ and $Y_1$ so that $Y_0 \cap Y_1 = Z$ is an anticanonical subvariety in $Y_0$ and $Y_1$.
\end{enumerate}
\end{defn}
It goes back to work of Friedman \cite{fried} that this definition implies that $N_{Z/Y_0} \otimes N_{Z/Y_1} \cong \mathscr{O}_Z$. We let $\mathscr{X}'$ be the $n$-fold cover of $\mathscr{X}$ totally ramified along $g^{-1}(0)$. Then $\mathscr{X}'$ is no longer smooth but has a $cA_n$ singularity along the preimage $Z'$ of $Z =Y_1 \cap Y_2$ in $\mathscr{X}'$. We may resolve $\mathscr{X}'$ by blowing up $(n-1)$ times at $Z$. The resulting variety has $n+1$ components, $Y'_0,\dots, Y'_n$ so that $Y'_0 = Y_0$ and $Y_n' = Y_1$, and $Y_1',\dots, Y_{n-1}' \cong \mathbb{P}_Z(\mathscr{O}_Z \oplus N_{Z/Y_0} )$. If we let $Z_i (\cong Z) = Y_i' \cap Y_{i+1}'$ then the fact that this is the fiber over 0 of a smooth family of varieties means that $N_{Z_i/Y_i'} \otimes N_{Z_i/Y_{i+1}'} \cong \mathscr{O}_{Z_i}$. 
\begin{defn}\label{def:simptII}
Let $Y= Y_0\cup \dots \cup Y_n$ be a normal crossings union of smooth varieties. Then $Y$ is said to be  a \emph{simplified type II degeneration of Calabi--Yau varieties} if
\begin{enumerate}
\item $Y_i \cap Y_j = 0$ if and only if $|i-j| \geq 1$. 
\item If $Z_i = Y_{i-1} \cap Y_{i}$ then for each $i \neq 0, n$, $Y_i=\mathbb{P}_{Z_i}(\mathscr{O}\oplus N_{Z_i/Y})$. 
\item For each $i \neq 0, n$, the images of $Z_i$ and $Z_{i+1}$ in $Y_i$ are disjoint sections of the projection map onto $Z_i$.
\item For each $i$,
\[
N_{Z_i/Y_i} \otimes N_{Z_i/Y_{i-1}} \cong \mathscr{O}_{Z_i}.
\]
This means that $Y$ is d-semistable in the language of Friedman \cite{fried}, or log smooth in the language of Kawamata and Namikawa \cite{kn}.
\end{enumerate}
\end{defn}
\begin{remark}
The name ``type II degeneration'' is motivated by Kulikov's classification of degenerations of K3 surfaces  \cite{kulikov} (see also \cite{perspink}). The word ``simplified'' is used because a proper generalization of Kulikov's definition would require only that $Y_i$ for $i \neq 0,  n$ be crepant birational to $\mathbb{P}_{Z_i}(\mathscr{O} \oplus N_{Z_i/Y_i})$.
\end{remark}
\begin{remark}
It is clear from the definition that for all $i, j$, the varieties $Z_i$ and $Z_j$ are biregular.
\end{remark}

Calling this a degeneration of Calabi--Yau varieties is justified by a result of Kawamata and Namikawa \cite{kn} which says that such varieties can be smoothed to Calabi--Yau varieties. 

According to \cite[Proposition 4.7]{stz} if $C$ is a normal crossings curve composed of two components $C_1$ and $C_2$, and $C_1 \cap C_2$ is a single point, then $\mathscr{P}\mathrm{erf}(C)$ is quasiequivalent to the homotopy fiber product of $\mathrm{D}^b_{\dg}(C_1)$ and $\mathrm{D}^b_{\dg}(C_2)$ along the derived pullback functors $\mathbf{L}f^* : \mathrm{D}^b_{\dg}(C_i) \rightarrow \mathrm{D}^b_{\dg}(C_1 \cap C_2)$. Since simplified type II degenerations are normal crossings, and their singular locus locally looks like a product of $C_1 \cup C_2$ and affine space, \cite[Proposition 4.7]{stz} generalizes easily to imply that there is a diagram of categories whose homotopy limit is quasiequivalent to $\Perf(Y)$
\[
\begin{tikzcd}
\DD^b_{\dg}(Y_0) \ar[rd] & \DD^b_{\dg}(Y_1) \ar[rd] \ar[d] & \dots & \DD^b_{\dg}(Y_{n-1}) \ar[d] \ar[rd] & \DD^b_{\dg}(Y_n) \ar[d] \\
& \DD^b_{\dg}(Z_1) & \DD^b_{\dg}(Z_2) &  \DD^b_{\dg}(Z_{n-1}) & \DD^b_{\dg}(Z_{n}) & 
\end{tikzcd}
\]
The arrows in this diagram denote the pullbacks to $Z_{i}$ or $Z_{i+1}$ in $Y_i$. We will often drop the indices and use the notation $Z$ to refer to $Z_i$. We may use Theorem \ref{thm:orlov} and Proposition \ref{prop:bundleblow} to replace $\DD^b_{\dg}(Y_i)$ for $i \neq 0,n$ with $\Gamma \mathscr{S}_{K_\phi}(\DD^b_{\dg}(Z_i), (-)\otimes^\mathbb{L} N_{Z_i/Y_i})$, and the restriction functors by $g_1$ and $g_2$ respectively. Denote this category $\Gamma \mathscr{S}_{K_\phi}$. Using Corollary \ref{cor:extw}, if $\DD^b_{\dg}(Y_0)$ and $\DD^b_{\dg}(Y_n)$ have semiorthogonal decompositions $\mathscr{A}_1,\dots, \mathscr{A}_{k_1}$ and $\mathscr{B}_1,\dots, \mathscr{B}_{k_2}$ and $F_i : \mathscr{A}_i \rightarrow \DD^b_{\dg}(Z_0)$ and $G_j : \mathscr{B}_j \rightarrow \DD^b_{\dg}(Z_{n-1})$ are the spherical functors obtained by composing the embeddings of $\mathscr{A}_i$ into $\DD^b_{\dg}(Y_0)$ or of $\mathscr{B}_j$ into $\DD^b_{\dg}(Y_n)$ with the pullback map to $Z$, then we may replace $\DD^b_{\dg}(Y_1)$ with $\Gamma \mathscr{S}_{K_{k_1}}(\mathscr{A}_i,F_i)$ and the pullback to $\DD^b_{\dg}(Z_0)$ by $s_\infty$, and similarly we can replace $\DD^b_{\dg}(Y_n)$ with $\Gamma\mathscr{S}_{K_{k_2}}(G_i,\mathscr{B}_i)$ and the pullback to $\DD^b_{\dg}(Z_{n-1})$ by $s_\infty$ (see Section \ref{sect:gsections} for notation). We denote these categories $\Gamma \mathscr{S}_{K_{k_1}}$ and $\Gamma \mathscr{S}_{K_{k_2}}$ respectively. Therefore, we have the following diagram of categories whose homotopy limit is quasiequivalent to $\Perf(Y)$.
\[
\begin{tikzcd}
\Gamma\mathscr{S}_{k_1} \ar[rd,"s_\infty"] & \Gamma\mathscr{S}_{K_\phi} \ar[rd,"g_2"] \ar[d, "g_1"] & \dots & \Gamma \mathscr{S}_{K_\phi} \ar[d,"g_1"] \ar[rd,"g_2"] & \Gamma\mathscr{S}_{K_{k_2}} \ar[d,"s_\infty"] \\
& \DD^b_{\dg}(Z) & \DD^b_{\dg}(Z) &  \DD^b_{\dg}(Z) & \DD^b_{\dg}(Z) & 
\end{tikzcd}
\]
We may then reinterpret this in the following way.
\begin{theorem}\label{thm:simpII}
To a simplified type II degeneration of Calabi--Yau varieties $Y = Y_0\cup \dots \cup Y_n$ and a choice of semiorthogonal decompositions of $Y_0$ and $Y_n$ there is a perverse sheaf of categories on $S^2$ with $n$ boundary components whose category of global sections is $\Perf(Y)$.
\end{theorem}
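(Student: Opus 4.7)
The plan is to assemble a global perverse sheaf of categories from local models attached to each irreducible component of $Y$ and to recognize its category of global sections as $\Perf(Y)$ via the descent of Remark \ref{rmk:stz}.

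First I would build a local model on each component. For the end components $Y_0$ and $Y_n$, condition (1) of Definition \ref{def:simptII} says that $Z_0 \hookrightarrow Y_0$ is smooth and anticanonical, so $\mathbb{L}k_0^*\colon \DD^b_{\dg}(Y_0) \rightarrow \DD^b_{\dg}(Z_0)$ is spherical with cotwist equal to the Serre functor up to shift. Combined with the chosen semiorthogonal decomposition $\mathscr{A}_1,\dots,\mathscr{A}_{k_1}$ and Corollary \ref{cor:extw}, this produces a $K_{k_1}$-coordinatized perverse schober on a disc whose global sections recover $\DD^b_{\dg}(Y_0)$ and whose generic fibre functor $s_\infty$ is quasi isomorphic to $\mathbb{L}k_0^*$; the symmetric construction handles $Y_n$. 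For each middle component $Y_i \cong \mathbb{P}_Z(\mathscr{O}\oplus N_{Z_i/Y_i})$, Theorem \ref{thm:orlov} yields a perverse sheaf of categories $\mathscr{S}_{K_\phi}(\DD^b_{\dg}(Z),(-)\otimes^{\mathbb{L}}N_{Z_i/Y_i})$ on a cylinder whose global sections recover $\DD^b_{\dg}(Y_i)$, and Proposition \ref{prop:bundleblow} identifies the two boundary functors $g_1,g_2$ with the pullbacks to the two sections, i.e.\ with the restrictions to $Z_{i-1}$ and $Z_i$ respectively.

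Next I would assemble these local pieces along the dual intersection graph of $Y$, which is an interval with $n+1$ vertices. By Remark \ref{rmk:stz} applied iteratively, $\Perf(Y)$ is quasi equivalent to the homotopy limit of
\begin{equation*}
\DD^b_{\dg}(Y_0) \rightarrow \DD^b_{\dg}(Z_0) \leftarrow \DD^b_{\dg}(Y_1) \rightarrow \cdots \leftarrow \DD^b_{\dg}(Y_n),
\end{equation*}
where the arrows are the appropriate restriction functors. Substituting each $\DD^b_{\dg}(Y_i)$ by its local perverse sheaf of categories presentation and each restriction by the corresponding boundary functor ($g_1$, $g_2$, or $s_\infty$), and invoking that iterated homotopy limits compose together with Theorem \ref{thm:glsect}, rewrites this homotopy limit as the category of global sections of a single perverse sheaf of categories on the surface obtained by gluing the two discs to the $n-1$ cylinders along matched boundary arcs. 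One then checks that this surface has the topology of $S^2$ with $n$ boundary components, the $n$ distinguished core loops carrying the monodromy autoequivalences $(-)\otimes^{\mathbb{L}}N_{Z_i/Y_i}$.

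The main obstacle is verifying that these substitutions produce a bona fide perverse sheaf of categories in the sense of Section \ref{sect:gpsc}; specifically, that the monodromy condition in equation (\ref{eq:monodromy}) holds on every closed cycle in the glued skeleton. The key input is the d-semistability assumption (3): the isomorphism $N_{Z_i/Y_i}\otimes N_{Z_i/Y_{i+1}}\cong\mathscr{O}_{Z_i}$ ensures that the autoequivalences inherited from the two sides of $Z_i$ are mutually inverse on $\HH^0\DD^b_{\dg}(Z)$, so the products along cycles that cross a gluing edge act trivially. A secondary, essentially formal check is that the hypotheses of Proposition \ref{prop:cohd} hold at each step of the iterated descent; this follows from the smoothness and transversality built into Definition \ref{def:simptII}, which provide both the Tor-vanishing and the short exact sequence of structure sheaves required there.
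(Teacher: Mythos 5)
Your assembly strategy — replace each $\DD^b_{\dg}(Y_i)$ for $0<i<n$ by the cylinder model $\Gamma\mathscr{S}_{K_\phi}(\DD^b_{\dg}(Z),(-)\otimes^{\mathbb{L}}N_{Z_i/Y_i})$ via Theorem~\ref{thm:orlov} and Proposition~\ref{prop:bundleblow}, replace the end pieces by schobers via Corollary~\ref{cor:extw}, and feed everything into the iterated homotopy limit of Remark~\ref{rmk:stz} — is exactly the paper's construction, and the resulting skeleton $K_Y$ is indeed $S^2$ with $n$ boundary components. So the bulk of the argument is sound.

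However, your closing paragraph contains a genuine conceptual error about where d-semistability enters. You claim that verifying the monodromy condition of Section~\ref{sect:gpsc} (Equation~\ref{eq:monodromy}) is ``the main obstacle,'' and that condition (3) of Definition~\ref{def:simptII} is ``the key input'' needed so that ``cycles that cross a gluing edge act trivially.'' This misreads the definition. The requirement in Section~\ref{sect:gpsc} is only that $\mathsf{Mon}$ descend to a \emph{representation} of $\pi_1(S^\circ,v_1)$ in autoequivalences of $\HH^0\mathscr{C}$ — not that this representation be trivial on any class. Because $K^\circ$ is a spanning graph of $S^\circ$ with $\pi_1(K^\circ)\cong\pi_1(S^\circ)$ free, and because backtracking $\phi_{v,e}^{-1}\phi_{w,e}\phi_{w,e}^{-1}\phi_{v,e}$ cancels manifestly in Equation~\ref{eq:monodromy}, this consistency requirement is automatic for any choice of transition autoequivalences; there is nothing to check. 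In fact, the cylinder models introduce nontrivial monodromy $(-)\otimes^{\mathbb{L}}N_{Z_i/Y_i}$ around each core loop \emph{by design}, so the monodromy is emphatically not trivial in general. The paper is explicit on this: immediately after the theorem it notes that d-semistability ``played no role; indeed Theorem~\ref{thm:simpII} carries through for any chain of varieties satisfying (1) and (2) of Definition~\ref{def:simptII}.'' What condition (3) actually controls is whether the monodromy around each \emph{boundary component} is trivial — that is a separate statement, Theorem~\ref{thm:mono}, requiring the product of spherical twists from Addington--Aspinwall, and should not be conflated with Theorem~\ref{thm:simpII}. A minor secondary slip: at the end you appeal to Proposition~\ref{prop:cohd} to justify the descent, but that proposition only gives a full-and-faithful embedding of $\Perf(Y)$ into a fiber product; the quasi-equivalence needed is Remark~\ref{rmk:stz} (citing \cite[Proposition~4.7]{stz}), which you already correctly invoked earlier and should rely on exclusively.
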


This sheaf of categories is constructed by first gluing $(n-1)$ copies of $K_\phi$ together along their exterior edges to get a graph $K_{(n-1)\phi}$. This graph is naturally a spanning graph on $S^2$ with $n$ boundary components and we may equip it with a $K_{(n-1)\phi}$-coordinatized perverse sheaf of categories using precisely the same data as was used to describe $\mathscr{S}_{K_\phi}(\DD^b_{\dg}(Z), (-)\otimes^\mathbb{L} N_{Z/Y_i})$. This is given as follows. $K_{(n-1)\phi}$ has $2(n-1)$ vertices called $v_{1,j},v_{2,j}$ for $j =1,\dots (n-1)$ and $3(n-1) + 1$ edges which we denote $g_1,\dots, g_n$ and $f_{1,j}, f_{2,j}$ for $j = 1,\dots, (n-1)$. For $i=2,\dots, n-1$ let $g_i$ be adjacent to both $v_{2,i}$ and $v_{1,i+1}$, and let $g_1$ be adjacent to $v_{1,1}$ with its other end in a boundary component and $g_n$ adjacent to $v_{2,n}$ with its other end in another boundary component. Then $f_{1,i}$ and $f_{2,i}$ are adjacent to both $v_{1,i}$ and $v_{2,i}$. All of the vertices $v_{j,\ell}$ are trivalent, so we assign to them the category $A_2(\DD^b_{\dg}(Z))$. We have functors
\begin{align*}
F_{v_{1,1},g_1}  = F_{v_{2,n},g_n}& = F_{v_{2,i},g_i} = F_{v_{i+1,1},g_i} = f_3, \quad F_{v_{1,j}, f_{1,j}} = N_{Z/Y_i} \otimes^\mathbb{L} f_2, \\
&F_{v_{2,j},f_{2,j}} = F_{v_{1,j},f_{2,j}} = f_1, \quad F_{v_{2,j}, f_{1,j}} = f_2.
\end{align*}
The category of global sections of $\mathscr{S}_{K_{(n-1)\phi}}$ is equivalent to $\Perf(Y_1 \cup \dots \cup Y_{n-1})$. To $\mathscr{S}_{K_{k_1}}(\mathscr{A}_i,F_i)$ and $\mathscr{S}_{K_{k_2}}(\mathscr{B}_i,F_i)$, there are graphs with vertices $c, u_1,\dots, u_{k_1}$ and $d, w_{1}, \dots, w_{k_2}$ where $c$ is $(k_1+1)$-valent, $d$ is $(k_2+1)$-valent and $u_i,w_i$ are univalent. There are edges $r_1,\dots,r_{k_1}, r_\infty$ and $q_1,\dots, q_{k_2},q_\infty$ so that $r_i$ connects $c$ to $u_i$ if $i \neq \infty$ and $q_i$ connects $d$ to $w_i$ if $i \neq \infty$. We have that $\mathscr{A}_c = A_{k_1}(\DD^b_{\dg}(Z))$ and $\mathscr{A}_d = A_{k_2}(\DD^b_{\dg}(Z))$, $\mathscr{A}_{u_i} = \mathscr{A}_i$ and $\mathscr{A}_{w_i} = \mathscr{B}_i$ and $F_{u_i,r_i} = F_i$ and $F_{w_i,q_i} = G_i$. Finally, $F_{c,r_i} = f_i$ and $F_{d,q_i} = f_i$ if $i \neq \infty$, and $F_{c,r_\infty} = f_{k_1 + 1}$ and $F_{d,q_\infty} = f_{k_2 + 1}$.

Attach $r_\infty$ to $g_1$ and $q_\infty$ to $g_n$ to obtain the graph $K_Y$. This comes equipped with a $K_Y$-coordinatized perverse sheaf of categories whose category of global sections is quasiequivalent to $\Perf(Y)$ by the argument given above.

\begin{figure}
    \begin{tikzpicture}[scale=0.8]

\draw[very thick] (3.86,-1) to (5.86,-1);
\draw[very thick] (3.86,1) to (1.86,1);

\draw[very thick] (1.86,-1) to (-0.14,-1);

	\draw[very thick] (5,-2) arc (270:450:1cm and 2cm);
    \draw[dashed,very thick] (5,-2) arc (270:90:1cm and 2cm);

	\draw[very thick] (1,-2) arc (270:450:1cm and 2cm);
    \draw[dashed,very thick] (1,-2) arc (270:90:1cm and 2cm);

\draw[very thick] (3,-2) arc (270:450:1cm and 2cm);
\draw[dashed, very thick] (3,-2) arc (270:90:1cm and 2cm);

\draw[very thick] (-6,-2) arc (270:450:1cm and 2cm);
\draw[dashed, very thick] (-6,-2) arc (270:90:1cm and 2cm);

\draw[very thick] (-4,-2) arc (270:450:1cm and 2cm);
\draw[dashed, very thick] (-4,-2) arc (270:90:1cm and 2cm);

	\draw[very thick] (-8,0) ellipse (1cm and 2cm);

	\draw[dashed] (-2,0) ellipse (1cm and 2cm);
\draw[dashed] (-1,0) ellipse (1cm and 2cm);

\draw[very thick] (-8,2) to (-2,2);
\draw[very thick] (-8,-2) to (-2,-2);

\draw[decorate,decoration={brace,amplitude=10pt}, very thick]
(3,-2.2) -- (-6,-2.2);
\node at (-1.5,-3){$(n-1)$\text{-times}};

\draw[very thick] (3,0) circle (5pt);
\draw[very thick] (1,0) circle (5pt);
\draw[very thick] (-2,0) circle (5pt);
\draw[very thick] (-4,0) circle (5pt);

\draw[very thick] (5,2) to (-1,2);
\draw[very thick] (5,-2) to (-1,-2);

\draw[fill = black] (1.86,-1) circle (2pt);
\draw[fill = black] (1.86,1) circle (2pt);

\draw[fill = black] (3.86,-1) circle (2pt);
\draw[fill = black] (3.86,1) circle (2pt);

\draw[fill = black] (-3.14,-1) circle (2pt);
\draw[fill = black] (-3.14,1) circle (2pt);
\draw[fill = black] (-5.14,-1) circle (2pt);
\draw[fill = black] (-5.14,1) circle (2pt);

\draw[very thick] (-5.14,-1) to (-3.14,-1);
\draw[very thick] (-3.14,1) to (-1.14,1);
\draw[very thick] (-5.14,1) to (-7.14,1);

\end{tikzpicture}\caption{\label{fig:nSphi}The skeleton $K_{(n-1)\phi}$.}
\end{figure}
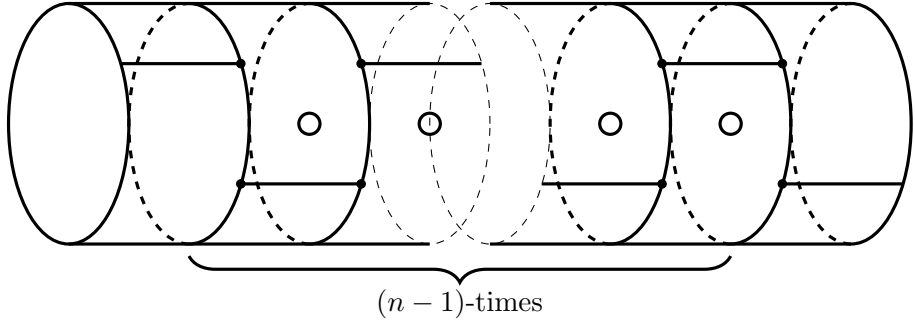

\begin{figure}
    \begin{tikzpicture}[scale=0.8]

\draw[fill = black] (5.3,-1) circle (2pt);
\draw[very thick] (5.3,-1) .. controls (5.4,-0.1).. (4.9,1);
\draw[very thick] (5.3,-1) .. controls (5.7,-0.1).. (5.7,1.3);
\draw[very thick] (5.3,-1) .. controls (6,-0.5).. (6.5,0.5);
\draw[very thick] (5.3,-1) .. controls (5.6,-1.3).. (6.2,-1.2);
\draw[very thick] (5.3,-1) .. controls (5.3,-1.3).. (5.1,-1.7);

\draw[fill = black] (5.1,-1.7) circle (2pt);
\draw[fill = black] (6.2,-1.2) circle (2pt);
\draw[fill = black] (6.5,0.5) circle (2pt);
\draw[fill = black] (5.7,1.3) circle (2pt);
\draw[fill = black] (4.9,1) circle (2pt);

\draw[very thick] (3.86,-1) to (5.3,-1);
\draw[very thick] (3.86,1) to (1.86,1);

\draw[very thick] (1.86,-1) to (-0.14,-1);

	\draw[very thick] (5,-2) arc (270:450:2cm and 2cm);

	\draw[very thick] (1,-2) arc (270:450:1cm and 2cm);
    \draw[dashed,very thick] (1,-2) arc (270:90:1cm and 2cm);

\draw[very thick] (3,-2) arc (270:450:1cm and 2cm);
\draw[dashed, very thick] (3,-2) arc (270:90:1cm and 2cm);

\draw[very thick] (-6,-2) arc (270:450:1cm and 2cm);
\draw[dashed, very thick] (-6,-2) arc (270:90:1cm and 2cm);

\draw[very thick] (-4,-2) arc (270:450:1cm and 2cm);
\draw[dashed, very thick] (-4,-2) arc (270:90:1cm and 2cm);

	\draw[very thick] (-8,-2) arc (270:90:2cm and 2cm);

	\draw[dashed] (-2,0) ellipse (1cm and 2cm);
\draw[dashed] (-1,0) ellipse (1cm and 2cm);

\draw[very thick] (-8,2) to (-2,2);
\draw[very thick] (-8,-2) to (-2,-2);

\draw[decorate,decoration={brace,amplitude=10pt}, very thick]
(3,-2.2) -- (-6,-2.2);
\node at (-1.5,-3){$(n-1)$\text{-times}};

\draw[very thick] (3,0) circle (5pt);
\draw[very thick] (1,0) circle (5pt);
\draw[very thick] (-2,0) circle (5pt);
\draw[very thick] (-4,0) circle (5pt);

\draw[very thick] (-9.7,0) circle (5pt);

\draw[very thick,dashed] (6.7,0) circle (5pt);

\draw[very thick] (5,2) to (-1,2);
\draw[very thick] (5,-2) to (-1,-2);

\draw[fill = black] (1.86,-1) circle (2pt);
\draw[fill = black] (1.86,1) circle (2pt);

\draw[fill = black] (3.86,-1) circle (2pt);
\draw[fill = black] (3.86,1) circle (2pt);

\draw[fill = black] (-3.14,-1) circle (2pt);
\draw[fill = black] (-3.14,1) circle (2pt);
\draw[fill = black] (-5.14,-1) circle (2pt);
\draw[fill = black] (-5.14,1) circle (2pt);

\draw[very thick] (-5.14,-1) to (-3.14,-1);
\draw[very thick] (-3.14,1) to (-1.14,1);
\draw[very thick] (-5.14,1) to (-7.14,1);

\draw[fill = black] (-7.14,1) circle (2pt);

\draw[very thick] (-7.14,1).. controls (-7.5,1.5) .. (-8,1.8);
\draw[very thick] (-7.14,1).. controls (-8,1.2) .. (-9,1.1);
\draw[very thick] (-7.14,1).. controls (-8,0.5) .. (-9.2,0.2);
\draw[very thick] (-7.14,1).. controls (-8,-1.2) .. (-9,-1);
\draw[very thick] (-7.14,1).. controls (-7,-0.5) .. (-7.14,-1.5);

\draw[fill=black] (-8,1.8) circle (2pt);
\draw[fill=black] (-9,1.1) circle (2pt);
\draw[fill=black] (-9.2,0.2) circle (2pt);
\draw[fill=black] (-9,-1) circle (2pt);
\draw[fill=black] (-7.14,-1.5) circle (2pt);

\end{tikzpicture}\caption{\label{fig:nY} The skeleton $K_{Y}$.}
\end{figure}
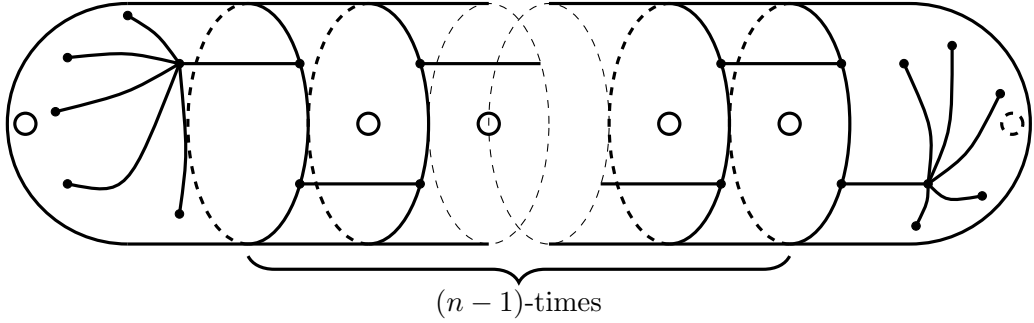

We may note that, until now, the d-semistability condition, that $N_{Z_i/Y_i} \otimes N_{Z_i/Y_{i+1}} = \mathscr{O}_{Z_i}$ played no role; indeed Theorem \ref{thm:simpII} carries through for any chain of varieties satisfying (1) and (2) of Definition \ref{def:simptII}. The d-semistability condition translates in terms of perverse sheaves of categories the the following statement. 

To each boundary component of $S$, we have a small loop going around it. Therefore, for each boundary component, there is a cycle in $K^\circ$ (see Section \ref{sect:gpsc} for definition) which descends to that loop in $\pi_1(S^\circ, v)$ for some vertex of $K^\circ$.

\begin{theorem}\label{thm:mono}
If $Y$ is a normal crossings variety satisfying (1) and (2) of Definition \ref{def:simptII}, then the perverse sheaf of categories constructed in Theorem \ref{thm:simpII} has trivial monodromy around each boundary component of $S$ if and only if $Y$ satisfies (3), which is to say that it can be deformed to a smooth Calabi--Yau variety by \cite{kn}.
\end{theorem}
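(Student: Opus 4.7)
The plan is to compute the monodromy of $\mathscr{S}_{K_Y}$ around each of the $n$ boundary components of $S$ directly from Equation (\ref{eq:monodromy}) and the explicit edge functors used to construct $K_Y$ just before the theorem. First I would identify each boundary component of $S$ with one of the intersections $Z_0, \dots, Z_{n-1}$ and, for each $i$, describe the cycle $C_i$ in $K^\circ$ that wraps once around it: since $K_Y$ is obtained by gluing $(n-1)$ copies of $K_\phi$ end-to-end and attaching the schobers for $Y_0$ and $Y_n$ at the ends, $C_i$ crosses only those edges of $K^\circ$ adjacent to the components $Y_i$ and $Y_{i+1}$.

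Next, I would record that every autoequivalence $\phi_{v,e}$ in the construction is trivial except for $\phi_{v_{1,j},f_{1,j}} = (-)\otimes^{\mathbb{L}} N_{Z_j/Y_j}$ on each $\mathbb{P}^1$-bundle cylinder $Y_j$ (for $1 \leq j \leq n-1$), while the schobers for $Y_0$ and $Y_n$ contribute to the monodromy only via the spherical twists around their marked points. By \cite[Theorem 11]{asp-add}, the composition of those spherical twists equals the twist of the full pull-back $\mathbb{L}k_0^*$ (respectively $\mathbb{L}k_n^*$), which for a smooth anticanonical divisor embedding is tensor product with the normal bundle up to shift, by direct computation from the Koszul resolution $0 \to \mathscr{O}(-Z) \to \mathscr{O} \to \mathscr{O}_Z \to 0$. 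Combining these contributions and using the $\mathbb{P}^1$-bundle identity $N_{Z_{j-1}/Y_j} \otimes N_{Z_j/Y_j} \cong \mathscr{O}_Z$ to rewrite each cylinder's factor in terms of its adjacent intersection, the monodromy becomes
\[
\mathsf{Mon}(C_i) \;\simeq\; (-) \otimes^{\mathbb{L}} \bigl( N_{Z_i/Y_i} \otimes N_{Z_i/Y_{i+1}} \bigr),
\]
with shifts at the two end boundaries cancelling against shifts from the adjacent cylinders.

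Since tensoring with a line bundle is the identity autoequivalence of $\HH^0\DD^b_{\dg}(Z)$ only when the line bundle is trivial, simultaneous triviality of $\mathsf{Mon}(C_0), \dots, \mathsf{Mon}(C_{n-1})$ is equivalent to the $n$ d-semistability conditions $N_{Z_i/Y_i} \otimes N_{Z_i/Y_{i+1}} \cong \mathscr{O}_{Z_i}$. The main obstacle will be the careful treatment of the two end-boundary components, where the normal bundles $N_{Z_0/Y_0}$ and $N_{Z_{n-1}/Y_n}$ come from the schober spherical twists rather than edge autoequivalences; one must verify that the shifts produced by \cite[Theorem 11]{asp-add} cancel correctly with the shifts from the adjacent $\mathbb{P}^1$-bundle cylinders so that $\mathsf{Mon}(C_0)$ and $\mathsf{Mon}(C_{n-1})$ really are pure line-bundle autoequivalences, and thus the d-semistability condition can be read off directly.
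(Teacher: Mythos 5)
Your proposal takes essentially the same route as the paper: compute $\mathsf{Mon}(C_i)$ edge by edge from Equation~\ref{eq:monodromy}, use \cite[Theorem 11]{asp-add} to collapse the product of spherical twists at the schober vertices into the twist of $\mathbb{L}k^*$ (which is tensoring with the normal bundle up to shift), and use the $\mathbb{P}^1$-bundle normal-bundle identity to cancel the transition functors coming from the cylinder pieces. The place you differ is in organization rather than substance: the paper splits the analysis into three cases (the interior boundary circles sitting between two ruled components, the two end boundary circles adjacent to the schobers for $Y_0$ and $Y_n$, and the degenerate case $n=1$), whereas you treat every boundary component uniformly by reducing its monodromy to $(-)\otimes^{\mathbb{L}}(N_{Z_i/Y_i}\otimes N_{Z_i/Y_{i+1}})$. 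This uniform formulation is arguably cleaner, since it makes visually evident that the $n$ triviality conditions on the monodromies are exactly the $n$ d-semistability conditions on the intersections $Z_0,\dots,Z_{n-1}$; the paper's interior case is stated rather tersely (it asserts triviality without explicitly invoking d-semistability, leaving the reader to unwind how the two normal bundle factors combine). The one place to be careful, which you correctly flag, is the bookkeeping of shifts: the paper disposes of this by noting that the Addington--Aspinwall twist equals tensor with the normal bundle ``up to shift'' and trusting that the shifts at the two ends cancel against those contributed by the adjacent cylinders. It would strengthen your writeup to make the cancellation explicit, for example by noting that the Koszul computation of $T_{\mathbb{L}k^*}$ gives a shift by $[2-\dim Y_0]$ on the $Y_0$ side and $[2-\dim Y_n]$ on the $Y_n$ side, matched exactly by the opposite shifts picked up traversing the $\mathbb{P}^1$-bundle monodromy in the adjacent cylinder, so that the net autoequivalence really is an unshifted line bundle twist and triviality is read off directly.
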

Before proceeding to the proof of this result, we will record a standard result. A section of a projective bundle $\mathbb{P}_X(\mathscr{E})$ is determined by a surjective map of vector bundles
\[
\mathscr{E} \longrightarrow \mathscr{L}
\]
where $\mathscr{L}$ is a line bundle.
\begin{proposition}
If $\mathscr{E}$ is a rank 2 vector bundle on a smooth projective variety $X$, and if $D$ is a section of  $\mathbb{P}_X(\mathscr{E})$ determined by a short exact sequence of vector bundles
\[
0 \longrightarrow \mathscr{L}_1 \longrightarrow \mathscr{E} \longrightarrow\mathscr{L}_2 \longrightarrow 0
\]
then $N_{D/\mathbb{P}_X(\mathscr{E})} \cong \mathscr{L}_2 \otimes \mathscr{L}^{-1}_1$.
\end{proposition}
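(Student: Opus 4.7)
The plan is to reduce the computation of $N_{D/\mathbb{P}_X(\mathscr{E})}$ to the relative Euler sequence. First I would observe that since the section $\sigma: X \to \mathbb{P}_X(\mathscr{E})$ cutting out $D$ satisfies $\pi \circ \sigma = \mathrm{id}_X$ where $\pi: \mathbb{P}_X(\mathscr{E}) \to X$ is the projection, the restriction to $D$ of the relative tangent sequence
\[
0 \to T_{\mathbb{P}/X} \to T_\mathbb{P} \to \pi^*T_X \to 0
\]
is split by $d\sigma$. Comparing with the normal bundle sequence $0 \to T_D \to T_\mathbb{P}|_D \to N_{D/\mathbb{P}} \to 0$ yields a canonical isomorphism $N_{D/\mathbb{P}_X(\mathscr{E})} \cong \sigma^*T_{\mathbb{P}/X}$, so it suffices to identify $\sigma^*T_{\mathbb{P}/X}$.

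Next I would invoke the relative Euler sequence, using the convention adopted earlier in the paper (consistent with $\sigma_\mathscr{O}^*\mathscr{O}_\mathbb{P}(1) = \mathscr{O}_X$ in the proof of Proposition~\ref{prop:bundleblow}) in which sections of $\pi$ correspond to rank-one subbundles of $\mathscr{E}$ and $\mathscr{O}_\mathbb{P}(-1) \hookrightarrow \pi^*\mathscr{E}$ is the tautological sub. The tautological short exact sequence then reads
\[
0 \to \mathscr{O}_\mathbb{P}(-1) \to \pi^*\mathscr{E} \to Q \to 0,
\]
and a standard computation (taking $\mathscr{H}om(\mathscr{O}_\mathbb{P}(-1), -)$ of this sequence, or dualizing the usual Euler presentation of $\Omega_{\mathbb{P}/X}$) identifies $T_{\mathbb{P}/X} \cong Q \otimes \mathscr{O}_\mathbb{P}(1)$.

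Finally I would pull back along $\sigma$. Since $\sigma$ corresponds to the subbundle $\mathscr{L}_1 \hookrightarrow \mathscr{E}$, one has $\sigma^*\mathscr{O}_\mathbb{P}(-1) = \mathscr{L}_1$, and hence $\sigma^*\mathscr{O}_\mathbb{P}(1) = \mathscr{L}_1^{-1}$ and $\sigma^*Q = \mathscr{E}/\mathscr{L}_1 = \mathscr{L}_2$. Combining,
\[
N_{D/\mathbb{P}_X(\mathscr{E})} \cong \sigma^*T_{\mathbb{P}/X} \cong \sigma^*Q \otimes \sigma^*\mathscr{O}_\mathbb{P}(1) \cong \mathscr{L}_2 \otimes \mathscr{L}_1^{-1}.
\]
There is no real technical obstacle here; the only point requiring care is bookkeeping of conventions (projectivizing lines versus quotient lines, the sign of $\mathscr{O}_\mathbb{P}(1)$, and the direction of the Euler sequence), and once these are fixed to agree with the paper's earlier usage the computation is a three-line verification.
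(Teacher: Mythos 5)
The paper provides no proof of this proposition---it is introduced with the phrase ``we first record a standard result'' and stated without argument---so there is no internal proof to compare against. Your argument is correct and complete, and it is the standard one: the differential $d\sigma$ splits $d\pi$, so $N_{D/\mathbb{P}_X(\mathscr{E})} \cong \sigma^*T_{\mathbb{P}/X}$; the relative Euler presentation gives $T_{\mathbb{P}/X} \cong \mathscr{H}om\bigl(\mathscr{O}_\mathbb{P}(-1),Q\bigr) \cong Q \otimes \mathscr{O}_\mathbb{P}(1)$; and pulling back along $\sigma$ gives $\mathscr{L}_2 \otimes \mathscr{L}_1^{-1}$. This is exactly how one verifies, e.g., that the two sections of $\mathbb{F}_n = \mathbb{P}_{\mathbb{P}^1}(\mathscr{O}\oplus\mathscr{O}(n))$ have self-intersection $\pm n$.

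One small caveat: the consistency check you invoke does not actually pin down the convention. The identity $\sigma_\mathscr{O}^*\mathscr{O}_\mathbb{P}(1) \cong \mathscr{O}_X$ holds in both the ``lines'' convention you adopt and the Grothendieck (quotient) convention, simply because $\mathscr{O}^{-1} \cong \mathscr{O}$; in fact the paper cites Hartshorne~V.2.6, which uses the quotient convention, and the companion identity $\sigma_\mathscr{L}^*\mathscr{O}_\mathbb{P}(1) \cong \mathscr{L}$ in Proposition~\ref{prop:bundleblow} is what the quotient convention (with sections labelled by their quotient bundle) produces, not the lines convention. None of this affects your conclusion: the normal bundle of the divisor $D$ is intrinsic, and a parallel two-line computation in the quotient convention, using $\sigma^*\mathscr{O}_\mathbb{P}(1) \cong \mathscr{L}_2$ and $T_{\mathbb{P}/X} \cong (\pi^*\det\mathscr{E})^{-1}\otimes\mathscr{O}_\mathbb{P}(2)$, gives $N_{D/\mathbb{P}} \cong (\mathscr{L}_1\otimes\mathscr{L}_2)^{-1}\otimes\mathscr{L}_2^{\otimes 2} \cong \mathscr{L}_2\otimes\mathscr{L}_1^{-1}$ as well. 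So the proof is sound; just don't lean on that particular check as evidence that you have matched the paper's conventions.
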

Therefore, if $D_1$ and $D_2$ are two sections of $P = \mathbb{P}_Z(\mathscr{O} \oplus \mathscr{L})$ then $N_{D_1/P} = N_{D_2/P}^{-1}$. Now we proceed with the proof of Theorem \ref{thm:mono}.
\begin{proof}[Proof of Theorem \ref{thm:mono}]
There are three situations that we must consider. First, we look at the easiest case which is that of the interior punctures, that is, punctures $1,\dots, n-2$ in Figure \ref{fig:nY}. In this case, we have that our loop is made up of four vertices and five edges. We exhibit this as
\[
v_{1,j} \xrightarrow{f_{1,j}} v_{2,j} \xrightarrow{f_{2,j}} v_{1,j} \xrightarrow{g_{j}} v_{2,j+1} \xrightarrow{f_{2,j+1}} v_{1,j+1} \xrightarrow{f_{1,j+1}} v_{2,j+1} \xrightarrow{g_{j}} v_{1,j}
\]
Since the functor $\phi_{v_{1,j},f_{1,j}} = (-) \otimes^\mathbb{L}N_{Z_{j+1}/Y_j}$ and $\phi_{v_{1,j+1},f_{1,j+1}} = (-) \otimes^\mathbb{L} N_{Z_{j+1}/Y_{j+1}}$, and all other transition functors in this cycle are trivial, it follows that monodromy around this loop is trivial. 

Now we make the following observation. Let $K_n$ be the skeleton associated to a perverse schober with $n$ marked points. Then we recall that $K_n^\circ$ is the skeleton $K_n$ with each univalent vertex $v_p$ for $p \in \Sigma$ replaced by a trivalent vertex $v_p'$ with a loop. See Section \ref{sect:pschob} for details and notation. Counterclockwise monodromy around this loop is given by the spherical twist associated to $F_p:\mathscr{A}_p\rightarrow \mathscr{C}$. Now let $p_1,\dots, p_n$ be the set $\Sigma$, so that the edges $q_i$ are oriented counterclockwise around $c$ and $q_\infty$ is directly counterclockwise to $q_1$. Then define cycles $C_i$
\[
c \xrightarrow{q_i} v_{p_i'} \xrightarrow{e_{F_{p_i}}} v_{p_i'}\xrightarrow{q_i} c  
\] 
and a cycle $C_\infty$ by the concatenation $C_n \cdot \dots \cdot C_1$. Then it follows directly from the definition that monodromy around $C_\infty$ is simply 
\[
T_{F_n}\cdot \dots \cdot T_{F_1}.
\]
Let $k: Z_1\hookrightarrow Y_0$ be the closed embedding, assume that $\alpha_i : \mathscr{A}_i \rightarrow \DD^b_{\dg}(Y_0)$ form a semiorthogonal decomposition and let $F_i = \mathbb{L}k^* \cdot \alpha_i$. Then it follows from work of Addington and Aspinwall \cite[Theorem 11]{asp-add} that $T_{F_n}\cdot \dots \cdot T_{F_1}$ is the spherical twist associated to $\mathbb{L}k^*$. Furthermore, this spherical twist is, up to shift, just $(-)\otimes^\mathbb{L} N_{Y_0/Z_1}$. Now if we have a simplified type II degeneration of Calabi--Yau varieties, then monodromy around boundary components at either end of Figure \ref{fig:nY} is given by concatenation with $C_\infty$ and the cycle
\[
c \xrightarrow{q_\infty} v_{1,1} \xrightarrow{f_{1,1}} v_{1,2} \xrightarrow{f_{1,2}} v_{1,1} \xrightarrow{q_\infty} c
\]
Monodromy around this cycle is $(-)\otimes^\mathbb{L}N_{Z_1/Y_1}$. Therefore monodromy around the concatenation of these cycles is $\otimes^\mathbb{L} (N_{Z_1/Y_1} \otimes^\mathbb{L} N_{Z_1/Y_0}) = (-) \otimes^\mathbb{L} \mathscr{O}_{Z_1}$, which is trivial. Similarly we can deal with the monodromy around the final boundary component. The remaining case occurs when $n = 1$, that is, there are no ruled components of $Y$. In this case, a very similar argument suffices.
\end{proof}

\subsection{The case of K3 surfaces}

When we assume that the dimension of $\mathscr{X}$ is $3$, hence the dimension of $\mathscr{X}_0$ is 2, our results can be made more general. We have the following definition.
\begin{defn}
Let $\mathscr{X}$ be a K\"ahler manifold of dimension 3 equipped with a proper morphism $\pi:\mathscr{X} \rightarrow \Delta$. We say that $\mathscr{X}$ is a \emph{type II degeneration of K3 surfaces} if the following conditions hold.
\begin{enumerate}
\item The bundle $\omega_{\mathscr{X}}$ is trivial.
\item All fibers $\pi^{-1}(t)$ are smooth K3 surfaces if $t\neq 0$.
\item The fiber over $0$ is a union of surfaces $S_0 \cup \dots \cup S_n$ so that if $i = 0,n$ then $S_i$ is a smooth rational surface, and if $i \neq 0,n$ then $S_i$ is a smooth ruled surface over an elliptic curve. Furthermore, $E_i = S_i \cap S_{i+1}$ is a smooth elliptic curve and $S_i \cap S_j = \emptyset$ if $|i-j| \geq 1$.
\end{enumerate}
\end{defn} 
Note that we have changed notation slightly in order to emphasize that we are dealing with surfaces and curves instead of varieties of arbitrary dimension.

The fact that the total space is Calabi--Yau implies that $E_i\cup E_{i+1}$ is an anticanonical divisor in $S_i$ if $i \neq 0,1$ and $E_1$ and $E_n$ are anticanonical in $S_0$ and $S_n$ respectively. Generally \cite{perspink,kulikov}, there are three types of semistable degenerations of K3 surfaces, types I, II and III. Type I is essentially a degeneration to a smooth K3 surface and type III degenerations are normal crossings degenerations whose dual complex is a triangulation of $S^2$. 

There are well-known birational modifications of type II degenerations of K3 surfaces called type I modifications. See e.g. \cite{bmd} for details. 
\begin{defn}
Let $\pi:\mathscr{X} \rightarrow \Delta$ be a type II degeneration of K3 surfaces. Let $C$ be a smooth $(-1)$ curve in an irreducible component $S_i$ of $\pi^{-1}(0)$ which intersects $E_{i-1}$ or $E_i$ in a single point $p$. Then $C$ has normal bundle $\mathscr{O}(-1) \oplus \mathscr{O}(-1)$ in $\mathscr{X}$, thus it can be flopped in $\mathscr{X}$ to produce a type II degeneration $\pi':\mathscr{X}' \rightarrow \Delta$ of K3 surfaces. The fiber over $0$ of $\pi'$ is identical to that of $\pi$ except that $S_i'$ is the contraction of $S_i$ along $C$ and $S'_{i-1}$ or $S'_i$ is the blow up of $S_{i-1}$ or $S_i$ in the point $p$ depending on whether $C$ intersects $E_{i-1}$ or $E_i$. This is called a \emph{type I modification of the fiber over 0}.

\end{defn}
There are type II and III modifications of degenerations of K3 surfaces, but they do not play a role in type II degenerations of K3 surfaces. The following result is no doubt standard. We thank Alan Thompson for taking the time to explain its proof to us.
\begin{proposition}\label{prop:tImod}
Let $\mathscr{X} \rightarrow \Delta$ be a type II degeneration of K3 surfaces. By repeatedly performing type I modifications, we may construct a birational model of $\mathscr{X}$ for which the components $S_1,\dots, S_{n-1}$ are not simply ruled surfaces over an elliptic curve but in fact projective bundles over an elliptic curve.
\end{proposition}
\begin{proof}
Let $S_{n-1}$ be a component of the fiber over $0$, then $E_{n-1}$ and $E_{n}$ are sections of the ruling on $S_{n-1}$. If $S_{n-1}$ is not a $\mathbb{P}^1$ bundle over $E \cong E_1 \cong E_2$, then there is a fiber $R$ of the ruling on $S_{n-1}$ which is reducible. We will now show that the component $R'$ of $R$ which intersects $E_{n-1}$ is a $(-1)$ curve. 

The fibers of the ruling on $S_{n-1}$ must be of constant arithmetic genus $0$. The arithmetic genus is greater than or equal to the sum of the arithmetic genera of the irreducible components of each fiber, thus all irreducible components of each fiber must have arithmetic genus 0. Any irreducible singular curve has arithmetic genus greater than 0, so it follows that all components of curves in fibers of the ruling on $S_{n-1}$ are smooth rational curves. The generic fiber of this ruling is a smooth curve of genus 0 whose intersection number with $-K_{S_{n-1}}$ is $2$. Therefore, the genus formula for curves on surfaces says that the following situations occur for $C$ a smooth rational curve contained in the fiber of the ruling on $S_{n-1}$:
\begin{enumerate}
\item $C \cap (-K_{S_{n-1}}) = 0$ and $C^2 = -2$,
\item $C\cap (-K_{S_{n-1}}) = 1$ and $C^2 = -1$,
\item $C\cap (-K_{S_{n-1}}) = 2$ and $C^2 = 0$.
\end{enumerate}
In the first case, $C$ is disjoint from $-K_{S_{n-1}}$, in the second case $C$ intersects just one component of $-K_{S_{n-1}}$ and in the third case, $C$ is a general fiber of the ruling. Therefore, the component $R'$ chosen above is a $(-1)$ curve. Hence we may perform a type I modification along $R'$ which modifies $S_{n-1}$ by contracting $R'$ and blowing up the point $R' \cap E_{n-1}$ in $S_{n-2}$. 

Repeating this argument, we can obtain a birational model of $\mathscr{X}$ for which $S_{n-1}$ has no reducible fibers, hence is a projective bundle over $E_{n-1}$. The result then follows by iterating this argument for each $S_i$.
\end{proof}
Now let's make a numerical observation. Every rational surface $S$ has minimal model either $\mathbb{P}^2$ or $\mathbb{F}_n$ for some $n$. Both $\mathbb{P}^2$ and $\mathbb{F}_n$ admit full exceptional collections, therefore, by Orlov's formula \cite{orl-monoid}, $S$ admits a full exceptional collection. Let $k_0$ and $k_n$ be the number of exceptional objects in this exceptional collection on $\DD^b(S_0)$ and $\DD^b(S_n)$ respectively. 
\begin{proposition}\label{prop:24}
Let $S_0 \cup \dots \cup S_n$ be the special fiber of a type II degeneration of K3 surfaces, and assume that if $i \neq 0,n$, $S_i$ are $\mathbb{P}^1$ bundles over an elliptic curve. Then $k_0 + k_n = 24$.
\end{proposition}
\begin{proof}
The topological Euler characteristic of $S_0 \cup \dots \cup S_n$ is 24. Alan Thompson informs us that this can be proved for the special fiber of any semistable degeneration of K3 surfaces using the Clemens-Schmid exact sequence \cite{mor}, but we opt for a quicker proof here which is specific to the case of type II degenerations. We have that there is a distinguished triangle of perverse sheaves on $\pi^{-1}(0)$,
\[
\prescript{p}{}\psi_\pi\mathbb{C} \longrightarrow \prescript{p}{}\phi_\pi\mathbb{C} \longrightarrow \mathbb{C}
\]
see \cite[pp. 261]{ps}. Here $\prescript{p}{}\psi_\pi$ denotes the perverse nearby cycles functor and $\prescript{p}{}\phi_\pi$ the perverse vanishing cycles functor, which are just the nearby and vanishing cycles functors composed with $[1]$. The hypercohomology of $\prescript{p}{}\psi_\pi$ is isomorphic to the cohomology of $\pi^{-1}(t)$ for $t \neq 0$, hence it has Euler characteristic 24, since $\pi^{-1}(t)$ is a K3 surface. It follows from \cite[Theorem 10]{rud} that the Euler characteristic of the sheaf $\prescript{p}{}\phi_\pi\mathbb{C}$ is the Euler characteristic of the critical locus of $\pi$ up to sign, which is a disjoint union of $n$ elliptic curves. Therefore, the Euler characteristic of $\prescript{p}{}\phi\mathbb{C}$ is 0. It follows then that the topological Euler characteristic of $\pi^{-1}(0)$ is equal to that of $\pi^{-1}(t)$, which we know is 24.

Now applying the Mayer-Vietoris exact sequence, along with the fact that the Euler characteristic of an elliptic curve is 0, we find that
\[
\sum_{i=1}^n \chi_\mathrm{top}(S_i) = 24. 
\]
We know that a $\mathbb{P}^1$ bundle over an elliptic curve has Betti numbers $b_1 = b_2 = b_3 = 2$ and $b_0 = b_4 = 1$, hence its topological Euler characteristic is 0. Thus $\chi_\mathrm{top}(S_0) + \chi_\mathrm{top}(S_n) = 24$. Since $h^{1}(S_i) = h^{3}(S_i) = h^{0,2}(S_i) = 0$ for $i = 0$ or $n$, the topological Euler characteristics of $S_0$ and $S_n$ are equal to the rank of $\mathrm{HH}_0(\DD^b(S_0))$ and $\mathrm{HH}_0(\mathrm{D}^b(S_n))$ respectively by the Hochschild-Kostant-Rosenberg isomorphism, which says that for a smooth projective variety $X$
\[
\mathrm{HH}_m(\DD^b(X)) = \bigoplus_{q-p=m}\HH^q(X,\Omega_X^p).
\]
Kuznetsov \cite[Corollary 7.5]{kuz} shows that Hochschild homology is additive with respect to semiorthogonal decompositions. Therefore, $\chi_\mathrm{top}(S_0) = \rank \mathrm{HH}_0(\DD^b(S_0)) = k_0$ and $\chi_\mathrm{top}(S_n) = \rank \mathrm{HH}_0(\DD^b(S_n)) = k_n$. Thus $k_0 + k_n = 24$.
\end{proof}
Theorem \ref{thm:simpII} and Proposition \ref{prop:24} have the following corollary.
\begin{corollary}\label{cor:puttogether}
Let $\mathscr{X}$ be a type II degeneration of K3 surfaces $\mathscr{X}$ with $n+1$ irreducible components. There is birational model $\mathscr{X}'$ of $\mathscr{X}$ of with central fiber $\mathscr{X}_0'$ and $K$-coordinatized perverse sheaf of categories $\mathscr{S}_K$ over $S^2$ with $n$ boundary components and with respect to a stratification given by a set of points $\Sigma$ so that $K, \Sigma$ and $\mathscr{S}_K$ have the following properties.
\begin{enumerate}
\item All edges of $K$ have both ends in $\mathrm{Vert}(K)$.
\item The category of global sections is $\Perf(\mathscr{X}_0')$. 
\item The monodromy around all boundary components of $\mathscr{S}_K$ is trivial.
\item $|\Sigma| = 24$ and for each $p \in \Sigma$, the category $\mathscr{A}_p = \Perf_k$.
\end{enumerate}
\end{corollary}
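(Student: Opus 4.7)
The plan is to assemble the statement by stringing together Proposition \ref{prop:tImod}, Theorem \ref{thm:simpII}, Theorem \ref{thm:mono}, and Proposition \ref{prop:24}, in that order. First I would apply Proposition \ref{prop:tImod} to perform a sequence of type I modifications on $\mathscr{X}$, producing a birational model $\mathscr{X}'$ whose central fiber $\mathscr{X}_0' = S_0' \cup \dots \cup S_n'$ is a \emph{simplified} type II degeneration in the sense of Definition \ref{def:simptII}. In particular the d-semistability condition $N_{E_i/S_i'} \otimes N_{E_i/S_{i+1}'} \cong \mathscr{O}_{E_i}$ holds at each double curve $E_i = S_i' \cap S_{i+1}'$; each middle component $S_i'$ (for $i \neq 0, n$) is a minimal $\mathbb{P}^1$-bundle of the form $\mathbb{P}_{E_i}(\mathscr{O} \oplus N_{E_i/S_i'})$; and $S_0', S_n'$ are smooth rational surfaces.

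Next, since $S_0'$ and $S_n'$ are smooth rational surfaces, their minimal models lie among $\mathbb{P}^2$ and $\mathbb{F}_m$, and Orlov's blow-up formula \cite{orl-monoid} produces full exceptional collections on $S_0'$ and $S_n'$ of some lengths $k_0'$ and $k_n'$. Each summand is equivalent to $\Perf_k$, so in the language of Section \ref{sect:sod1} I obtain semiorthogonal decompositions $\DD^b_\mathrm{dg}(S_0') = \langle \mathscr{A}_1,\dots,\mathscr{A}_{k_0'}\rangle$ and $\DD^b_\mathrm{dg}(S_n') = \langle \mathscr{B}_1,\dots,\mathscr{B}_{k_n'}\rangle$ with every $\mathscr{A}_i, \mathscr{B}_j \simeq \Perf_k$. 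Restriction to the smooth anticanonical divisors $E_0$ and $E_{n-1}$ is spherical with cotwist a shift of the Serre functor (Proposition \ref{prop:add}), hence composing with the exceptional embeddings produces spherical functors $F_i, G_j$. This is precisely the input required by Theorem \ref{thm:simpII}, whose output is a $K$-coordinatized perverse sheaf of categories $\mathscr{S}_K$ on $S^2$ with $n$ boundary components whose category of global sections is $\Perf(\mathscr{X}_0')$; this gives (2). Inspection of the explicit skeleton $K_Y$ depicted in Figure \ref{fig:nY} shows every edge has both endpoints at vertices of $K$ (either a univalent marked vertex $u_i$ or $w_j$, or one of the higher-valent vertices $c, d, v_{1,\ell}, v_{2,\ell}$), giving (1); and by construction the category at each univalent vertex is one of the $\mathscr{A}_i$ or $\mathscr{B}_j$, which is $\Perf_k$, giving half of (4).

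Because $\mathscr{X}_0'$ is simplified it satisfies the d-semistability hypothesis of Theorem \ref{thm:mono}, which immediately yields (3): the monodromy around each boundary component is trivial. To finish (4) I still need $|\Sigma| = 24$. The set $\Sigma$ is precisely the collection of univalent vertices $\{u_i\} \cup \{w_j\}$, so $|\Sigma| = k_0' + k_n'$. Here I invoke Proposition \ref{prop:24}, which uses the vanishing cycles triangle and the vanishing of $\chi_\mathrm{top}$ for the elliptic double curves to compute $k_0 + k_1 + \dots + k_n = 24$; this identity is preserved under type I modifications (which do not alter the Euler characteristic of the central fiber), and hence holds for $\mathscr{X}_0'$ as well. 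Since each middle $S_i'$ is a minimal $\mathbb{P}^1$-bundle over an elliptic curve, its contribution $k_i'$ is zero, forcing $k_0' + k_n' = 24$.

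The main point of actual care will be in the middle step: checking that the skeleton produced by Theorem \ref{thm:simpII} (obtained by gluing $K_{k_0'}$, $K_{(n-1)\phi}$ and $K_{k_n'}$ along their exterior edges) honestly spans a sphere with exactly $n$ boundary components and that no stray edges escape into the boundary, so that condition (1) holds on the nose. Everything else reduces to a direct appeal to the results of Section \ref{sect:typeII} together with the topological count in Proposition \ref{prop:24}.
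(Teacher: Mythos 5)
Your proof is correct and follows essentially the same route the paper intends: modify by type I flops via Proposition~\ref{prop:tImod} to reach the simplified case, feed the resulting exceptional collections on the end components into Theorem~\ref{thm:simpII} to build $\mathscr{S}_K$, apply Theorem~\ref{thm:mono} for the monodromy, and close with the Euler characteristic count of Proposition~\ref{prop:24}. One minor simplification: since $\mathscr{X}'$ is itself a type II degeneration of K3 surfaces, you can apply Proposition~\ref{prop:24} to $\mathscr{X}'_0$ directly (where the middle $k_i'$ vanish because the middle components are already minimal $\mathbb{P}^1$-bundles over elliptic curves), rather than routing through the invariance of the sum under type I modifications.
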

Our goal now is to show that $\Perf(\mathscr{X}_0)$ is equivalent to $\Perf(\mathscr{X}_0')$ if $\mathscr{X}_0$ and $\mathscr{X}_0'$ denote the special fibers of type II degenerations of K3 surfaces related by a type I modification. Assume that $X = S_1 \cup S_2$ is normal crossings of dimension 2 and $Z = S_1 \cap S_2$ is smooth. Assume that there are smooth (possibly empty) divisors $Z_1$ and $Z_2$ in $S_1$ and $S_2$ which are disjoint from $Z$ and so that $Z \cup Z_1$ is anticanonical in $S_1$ and $Z \cup Z_2$ is anticanonical in $S_2$. Assume that $p$ is a point contained in $Z$. Then we can blow up either $S_1$ or $S_2$ in $p$ to get varieties $\widetilde{S}_1$ and $\widetilde{S}_2$. This produces  two different normal crossings varieties, $X_1 = \widetilde{S}_1 \cup S_2$ and $X_2 = S_1 \cup \widetilde{S}_2$, which look like part of a type II degeneration of K3 surfaces, which are related to one another by type I modification. We would like to show that $X_1$ and $X_2$ have equivalent derived categories of perfect complexes.

We will use the following notation. Let $S$ be a surface, and let $Z_1$ and $Z_2$ be disjoint smooth divisors in $S$ so that $Z_1 \cup Z_2$ is the vanishing locus of a global section of $-K_S$, and let $\mathrm{pt}$ be a point in $Z_1$, with $k_1:Z_1 \hookrightarrow S,k_2:Z_2 \hookrightarrow S$ and $i:\mathrm{pt}\hookrightarrow Z_1$ be the embedding maps. Let $\widetilde{S}$ be the blow up of $S$ in $\mathrm{pt}$ and $\widetilde{k}_1:Z_1\hookrightarrow \widetilde{S},\widetilde{k}_2:Z_2\hookrightarrow \widetilde{S}$ the embedding maps of the proper transform of $Z_1$ and $Z_2$ into $\widetilde{S}$.

\begin{proposition}\label{prop:ifpoint}
The category 
\[
\DD^b_{\dg}(\mathrm{pt}) \times_{\mathbb{R}i_*,\mathbb{L}\ell_1^*} \DD^b_{\dg}(S)
\] 
is quasiequivalent to $\DD^b_{\dg}(\widetilde{S})$. The functor $c_{\mathbb{R}i_*,\mathbb{L}\ell_1^*}$ is quasiisomorphic as a dg bimodule to $\mathbb{L}\widetilde{k}_1^*$, and $(a,b,\mu)\mapsto \mathbb{L}\ell_2^*(b)$ is quasiisomorphic to $\mathbb{L}\widetilde{\ell}_2^*$ as a dg bimodule.
\end{proposition}
\begin{proof}
We have that there is a semiorthogonal decomposition of a blow up of a surface $S$ at a point given by
\[
\langle \mathbf{R}q_*(\mathbf{L}p^*\DD^b(\mathrm{pt}) \otimes \mathscr{O}_{\mathbb{P}^1}(-1)), \mathbf{L}\pi^*\DD^b( S)\rangle.
\]
as described in \cite{orl-monoid}. Following the proof of Proposition \ref{prop:todiv} we obtain that there is a quasiequivalence between $\DD^b_{\dg}(\widetilde{S})$ and 
\[
\DD^b_{\dg}(\mathrm{pt}) \times_{\mathbb{R}i_*((-) \otimes^\mathbb{L} \mathbb{L}j^*\mathscr{O}(-1)), \mathbb{L}k_1^*}\DD^b_{\dg}(S).
\]
where $j : \mathrm{pt} \rightarrow \mathbb{P}^1$ is the embedding of the point $\mathrm{pt}$ into the exceptional $\mathbb{P}^1$. Therefore, since $\mathbb{L}j^*\mathscr{O}_{\mathbb{P}^1}(-1)$ is trivial, the first statement follows. Following \ref{prop:twosections}, we obtain the statements about the functors.
\end{proof}
%
%The germ of a type II degeneration $\mathscr{X}$ of K3 surfaces near 0 gives rise to a Calabi-Yau variety over $\mathbb{C}[[t]]$ by the fact that $\omega_{\mathscr{X}} = 0$. Bridgeland \cite{brid}, following Bondal and Orlov \cite{BO2} showed that a birational morphism between a pair Calabi-Yau threefolds over $\mathbb{C}$ does not affect the derived category of coherent sheaves. The same should be true of $\mathscr{X}$. If this is true then the special fibers of $\mathscr{X}$ must also be derived invariant under this birational modification. 

\begin{theorem}\label{prop:flip}
With notation as above, $\Perf(X_1)$ and $\Perf(X_2)$ are quasiequivalent.
\end{theorem}
\begin{proof}
Before we proceed, note that the category $A_2(\mathscr{C})$ always admits a quasiisomorphism to itself, given by
\[
\sigma_2 : (a,b,\mu) \mapsto (b, \cone(\mu),\xi)
\]
where $\xi: b \rightarrow \cone(\mu)$ is the natural closed map of degree 0. Furthermore, the functors $f_i \cdot \sigma_2$ and $f_{i+1}$ (where $i$ is taken modulo 3) are equivalent.

Using the fact that $\DD^b(\widetilde{V}_1)$ is quasiequivalent to $\DD^b_{\dg}(S_1) \times_{\mathbb{L}k_1^*,\mathbb{R}i_*} \DD^b_{\dg}(Z)$, we see that $\DD^b_{\dg}(\widetilde{S}_1)$ is quasiequivalent to the homotopy limit over the diagram,
\[
\begin{CD}
\DD^b_{\dg}(S_1) @>\mathbb{L}k_1^*>> \Tw \DD^b_{\dg}(Z) @<f_1<< A_2(\DD^b_{\dg}(Z)) @>f_2>> \Tw \DD^b_{\dg}(Z) @<\mathbb{R}i_*<< \DD^b_{\dg}(\mathrm{pt})
\end{CD}
\]
There's a functor from this homotopy limit to $A_2(\DD^b_{\dg}(Z))$ by the universal property of homotopy limits, and composing this functor with $f_3$ we get a functor to $\DD^b_{\dg}(Z)$, which is equivalent to $\mathbb{L}\widetilde{k}_1^*$ where $\widetilde{k}_1$ is the embedding of $Z$ into $\widetilde{S}_1$. Therefore, $\Perf(\widetilde{X}_1)$ is quasiequivalent to the homotopy limit over the diagram
\begin{equation}\label{biglimit}
\begin{CD}
\DD^b_{\dg}(S_1) @>\mathbb{L}k_1^*>> \DD^b_{\dg}(Z) @<f_1<< A_2(\DD^b_{\dg}(Z)) @>f_2>> \DD^b_{\dg}(Z) @<\mathbb{R}i_*<< \DD^b_{\dg}(\mathrm{pt}) \\
@. @. @Vf_3VV @. @ . \\
@. @.  \DD^b_{\dg}(Z) @. @. \\
@. @.  @A \mathbb{L}k_2^* AA @. @. \\
@. @.  \DD^b_{\dg}(V_2) @. @. 
\end{CD}
\end{equation}
However, applying Proposition \ref{prop:ifpoint} along with the symmetry of $A_2(\DD^b_{\dg}(Z))$ mentioned in the beginning of the proof, the homotopy limit over the diagram
\[
\begin{CD}
@. @. A_2(\DD^b_{\dg}(Z)) @>f_2>> \DD^b_{\dg}(Z) @<\mathbb{R}i_*<< \DD^b_{\dg}(\mathrm{pt}) \\
@. @. @Vf_3VV @. @ . \\
@. @.  \DD^b_{\dg}(Z) @. @. \\
@. @.  @A \mathbb{L}k_2^* AA @. @. \\
@. @.  \DD^b_{\dg}(S_2) @. @. 
\end{CD}
\]
is equivalent to $\DD^b_{\dg}(\widetilde{S}_2)$ and the functor from the limit to $\DD^b_{\dg}(Z)$ induced by $f_1$ is equivalent to $\mathbb{L}\widetilde{k}_2^*$. Therefore, the homotopy limit over the diagram in Equation \ref{biglimit} is also quasiequivalent to $\Perf(X_2)$.
\end{proof}
The following theorem uses the formalism of perverse sheaves of categories to show that $\Perf(\mathscr{X}_0)$ is not changed by a type I modifications. The same result can likely be extracted from the work of Bridgeland \cite{brid}. We expect that this holds in higher dimensions as well.
\begin{theorem} \label{thm:typeImod}
If $\mathscr{X}$ and $\mathscr{X}'$ are type II degenerations of K3 surfaces related by type I modifications, then $\Perf(\mathscr{X}_0)$ and $\Perf(\mathscr{X}_0')$ are quasiequivalent.
\end{theorem}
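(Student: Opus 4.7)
The plan is to reduce to the case of a single type I modification and then to apply Proposition \ref{prop:flip} locally to the two adjacent components that are affected. Any chain of type I modifications factors into single steps, so we may assume $\mathscr{X}'$ differs from $\mathscr{X}$ by flopping one $(-1)$ curve $C \subset S_i$ meeting the double curve $E = S_i \cap S_{i+1}$ transversally at a single point $p$. Geometrically, this replaces $S_i$ by its contraction $S_i^\flat$ along $C$ (so that $S_i$ is the blow-up of $S_i^\flat$ at the image of $p$) and replaces $S_{i+1}$ by its blow-up $\widetilde{S_{i+1}}$ at $p$; the other components, the double curves $E_j$ for $j \neq i$, and the birational class of $E$ itself are left unchanged, and the change in $E$ is an isomorphism on a neighborhood of $p$.

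First I would write both $\Perf(\mathscr{X}_0)$ and $\Perf(\mathscr{X}_0')$ via Remark \ref{rmk:stz} as homotopy limits over the \v{C}ech-style diagram with vertices $\DD^b_{\dg}(S_j)$, edges $\DD^b_{\dg}(E_j)$, and pullback structure maps. Every piece of the two diagrams that is not indexed by $\{S_i, S_{i+1}, E\}$ is literally identical, so after grouping the unchanged data together, it suffices to produce a quasi equivalence of the local sub-diagrams
\[
\DD^b_{\dg}(E_{i-1}) \longleftarrow \DD^b_{\dg}(S_i) \longrightarrow \DD^b_{\dg}(E) \longleftarrow \DD^b_{\dg}(S_{i+1}) \longrightarrow \DD^b_{\dg}(E_{i+1})
\]
and the analogous one with $S_i, S_{i+1}$ replaced by $S_i^\flat, \widetilde{S_{i+1}}$, \emph{compatibly} with the outer arrows. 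Taking $X_1 = S_i \cup S_{i+1}$ and $X_2 = S_i^\flat \cup \widetilde{S_{i+1}}$, identified with the two sides of Proposition \ref{prop:flip} (with the two-component surface being $S_i^\flat \cup S_{i+1}$, double curve $E$, outer boundaries $E_{i-1} \cap S_i^\flat$ and $E_{i+1} \cap S_{i+1}$, and marked point the image of $p$), the proposition gives a quasi equivalence $\Perf(X_1) \simeq \Perf(X_2)$ realized as the common homotopy limit of the diagram appearing in equation (\ref{biglimit}).

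The main obstacle is to check that this local quasi equivalence is compatible with the outer pullback functors to $\DD^b_{\dg}(E_{i-1})$ and $\DD^b_{\dg}(E_{i+1})$, so that it globalizes to a quasi equivalence of the full homotopy-limit diagrams computing $\Perf(\mathscr{X}_0)$ and $\Perf(\mathscr{X}_0')$. For this I would invoke the last statement of Proposition \ref{prop:ifpoint}: pullback from a blow-up at a point to a smooth divisor disjoint from that point is quasi isomorphic, as a dg bimodule, to pullback from the unblown variety. Concretely, this identifies the restriction functor $\DD^b_{\dg}(S_i) \to \DD^b_{\dg}(E_{i-1})$ with the restriction $\DD^b_{\dg}(S_i^\flat) \to \DD^b_{\dg}(E_{i-1})$ pulled back through the fiber-product presentation from Proposition \ref{prop:flip}, and similarly on the $S_{i+1}$ side. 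Combining this with Propositions \ref{ref:klprop2} and \ref{ref:klprop3}, which guarantee that quasi isomorphic bimodules produce quasi equivalent gluings, the local quasi equivalence extends to a quasi equivalence of diagrams, and passage to homotopy limits yields $\Perf(\mathscr{X}_0) \simeq \Perf(\mathscr{X}_0')$.
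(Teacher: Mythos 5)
Your proof follows the same essential route as the paper, whose proof simply cites Proposition \ref{prop:flip} (together with Proposition \ref{prop:tImod}): reduce to a single type I modification and apply Proposition \ref{prop:flip} to the two affected components. You usefully spell out the step the paper leaves implicit, namely that the local quasi equivalence of Proposition \ref{prop:flip} must be checked to commute up to homotopy with the outer restriction functors to $\DD^b_{\dg}(E_{i-1})$ and $\DD^b_{\dg}(E_{i+1})$ before it can be glued into the full homotopy-limit diagram computing $\Perf(\mathscr{X}_0)$; the second clause of Proposition \ref{prop:ifpoint} (the bimodule identification of $(a,b,\mu)\mapsto \mathbb{L}k_2^*(b)$ with $\mathbb{L}\widetilde{k}_2^*$), combined with Propositions \ref{ref:klprop2} and \ref{ref:klprop3}, supplies exactly what is needed. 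This is a correct and somewhat more complete account than the paper's one-line proof.
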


\begin{proof}
Apply Proposition \ref{prop:flip}.
\end{proof}
It follows then, by Proposition \ref{prop:tImod} and Corollary \ref{cor:puttogether} that if $\mathscr{X}$ is a type II degeneration of K3 surfaces, there is a perverse sheaf of categories whose category of global sections is $\Perf(\mathscr{X}_0)$.
\subsection{Connection with homological mirror symmetry}
The reader should interpret this in the following way. The global sections of the perverse sheaf of categories that we are studying should be equivalent to the Fukaya category of some symplectic fibration $h: U \rightarrow S^2 \setminus \{D_0,\dots, D_n\}$ whose fibers are mirror to an elliptic curve $E$ (for instance symplectic 2-tori). Therefore, the twisted, derived Fukaya category of a smooth fiber $h^{-1}(t)$, denoted $\DD^\pi\mathscr{F}(h^{-1}(t))$, should be quasiequivalent to $\DD^b_{\dg}(E)$ for some elliptic curve $E$. The singular fibers of $h$ should be nodal 2-tori whose vanishing cycles correspond to spherical objects in $\DD^b_{\dg}(E)$ under mirror symmetry. Proposition \ref{prop:tImod} says that this fibration should have trivial symplectic monodromy around the boundary components. This should be interpreted as saying that we can  compactify this fibration by adding a smooth symplectic torus at each boundary component. Proposition \ref{prop:24} says that this fibration should have 24 degenerate fibers. Therefore, if it admits the structure of a complex fibration, then it must be in fact a K3 surface. This dovetails nicely with work done by the first author along with C. Doran and A. Thompson \cite{dht}, where they conjecture that there is a bijection between certain fibration structures on a Calabi--Yau variety $X$ and Tyurin degenerations on the mirror Calabi--Yau variety $X^\vee$. In \cite{dt}, Doran and Thompson extend this to type II degenerations of K3 surfaces. In homological mirror symmetry, this conjecture takes the following form.
\begin{conj}\label{conj:dht}
Let $X$ be a $d$-dimensional Calabi--Yau variety and let $X^\vee$ be its mirror. Assume there is a fibration $f:X\rightarrow \mathbb{P}^1$ on $X$ by Calabi--Yau $(d-1)$-folds. Let $Z_1,\dots, Z_n$ be smooth fibers of $\pi$. Then there is a type II degeneration of $X^\vee$ with special fiber $Y$ so that
\[
\DD^\pi \mathscr{F}(X\setminus \{Z_1,\dots, Z_n\}) \cong \Perf(Y).
\]
According to the general picture presented by Seidel in \cite{seidicm}, the Fukaya category of $X \setminus \{Z_1,\dots,Z_n\}$ should deform to the Fukaya category of $X$. We conjecture that this deformation agrees with the deformation of $Y$ to $X^\vee$ given by Friedman \cite{fried}.
\end{conj}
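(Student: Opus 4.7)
The plan is to build the putative type II degeneration $Y_0 \cup \dots \cup Y_n$ directly from symplectic data on the A-side and then invoke the machinery of Section \ref{sect:typeII} in reverse. First, I would fix a regular value $b \in \mathbb{P}^1$ and a collection of simple counterclockwise arcs $\gamma_1,\dots,\gamma_n$ from $b$ to the critical values $c_i$ of $f: X \to \mathbb{P}^1$, together with small loops $\lambda_1,\dots,\lambda_n$ around the punctures at $\pi(Z_j)$. Using the forthcoming work of Ganatra--Pardon--Shende (as advertised in the introduction), the partially wrapped Fukaya category $\mathscr{F}(X\setminus\{Z_1,\dots,Z_n\})$ should be the category of global sections of a perverse sheaf of categories $\mathscr{S}^A$ on $S^2$ with $n$ boundary components (one per removed $Z_j$) and $\#\{c_i\}$ marked points, whose generic fiber is $\DD^\pi\mathscr{F}(F)$ for $F$ a smooth fiber of $f$.

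Next I would transport $\mathscr{S}^A$ across mirror symmetry for the fiber: by hypothesis $F$ has a mirror $(d-1)$-fold $F^\vee$, and the HMS equivalence $\DD^\pi\mathscr{F}(F)\simeq \DD^b_{\dg}(F^\vee)$ converts each spherical vanishing cycle functor at a critical value $c_i$ into a spherical functor $\Perf_k \to \DD^b_{\dg}(F^\vee)$ determined by a spherical object $E_i \in \DD^b(F^\vee)$. This produces a new perverse sheaf of categories $\mathscr{S}^B$ on the same decorated $S^2$, with $\Gamma\mathscr{S}^A \simeq \Gamma\mathscr{S}^B$ by naturality. The monodromy of $\mathscr{S}^B$ around each boundary loop $\lambda_j$ is, by Addington--Aspinwall (used in the proof of Theorem \ref{thm:mono}), the spherical twist associated to the fiber inclusion, which up to shift is tensoring with a line bundle $\mathscr{L}_j$ on $F^\vee$; the product of these line bundles is trivial since the loops $\lambda_j$ bound in $\pi_1(S^2 \setminus \{c_i\})$.

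I would then reverse-engineer the components $Y_i$ from the combinatorial data. The central cycles in the skeleton encode $\mathbb{P}^1$-bundle pieces $Y_i \cong \mathbb{P}_{F^\vee}(\mathscr{O}\oplus \mathscr{L}_j)$ (via Theorem \ref{thm:orlov}), while the two end vertices, carrying directed $A_\infty$ categories generated by the spherical objects $E_i$ associated to vanishing cycles on each side, reconstruct the rational/Fano caps $Y_0$ and $Y_n$ by Theorem \ref{thm:recon} applied to the spherical functors $\Perf_k \to \DD^b(F^\vee)$, $k \mapsto E_i$. Glueing these via the prescription of Theorem \ref{thm:simpII} gives a normal crossings variety $Y = Y_0\cup\dots\cup Y_n$, and the monodromy computation above together with Theorem \ref{thm:mono} forces the products $N_{Z/Y_i}\otimes N_{Z/Y_{i+1}}\cong \mathscr{O}_Z$, i.e.\ d-semistability. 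The equivalence $\Perf(Y)\simeq \Gamma\mathscr{S}^B \simeq \DD^\pi\mathscr{F}(X\setminus\{Z_i\})$ is then built into the construction.

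The main obstacle will be matching the two deformation theories in the final clause. On the A-side, Seidel's deformation of $\mathscr{F}(X\setminus\{Z_i\})$ to $\mathscr{F}(X)$ is governed by a Maurer--Cartan element in $\mathrm{HH}^2$ determined by counts of holomorphic discs hitting the removed fibers. On the B-side, Friedman--Kawamata--Namikawa produce a smoothing of $Y$ classified, infinitesimally, by an element of $\mathrm{Ext}^1(\Omega_Y^1,\mathscr{O}_Y)$ whose image in $\mathrm{Ext}^1(\Omega^1_{Z},\mathscr{O}_Z)$ trivializes the obstruction bundle $\mathscr{E}xt^1(\Omega^1_Y,\mathscr{O}_Y) \cong \mathscr{O}_Z$. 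One must show that HMS for the fiber $F$ identifies these two classes; the natural approach is to upgrade the perverse sheaf of categories $\mathscr{S}^A$ (resp.\ $\mathscr{S}^B$) to a family over a formal disc whose specialization at $t=0$ recovers the above, and to compare the resulting deformation cocycles via the mirror equivalence $F \leftrightarrow F^\vee$ applied fiberwise. Making this precise --- in particular, checking that Fukaya-theoretic disc counts dual to the punctures are converted by HMS into the obstruction classes controlling Friedman's smoothing --- is where essentially all the difficulty lies, and is the reason the statement is posed as a conjecture rather than a theorem.
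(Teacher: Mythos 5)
The paper does not prove this statement; it is explicitly stated as a conjecture, supported only by the circumstantial evidence of Sections~\ref{sect:recon}--\ref{sect:typeII} and by the degenerate example worked out in Theorem~\ref{thm:msdeg} (where the removed fiber $Z$ is singular and the mirror $Y$ is not d-semistable). So there is no ``paper's own proof'' to compare against; what you have produced is a strategy sketch, not a proof, and you correctly identify the main obstacle (matching Seidel's deformation of the exact Fukaya category with Friedman's smoothing) as the reason the statement remains conjectural. In that sense the proposal is honest and aligned with the paper's intent.

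That said, there are several gaps in the intermediate steps beyond the deformation-matching issue you flag. First, even granting the Ganatra--Pardon--Shende formalism, the perverse sheaf of categories $\mathscr{S}^A$ produced from $X\setminus\{Z_1,\dots,Z_n\}$ will a priori live on some spanning graph of the $n$-holed sphere; nothing forces it to have the very specific chain-shaped skeleton $K_Y$ appearing in Theorem~\ref{thm:simpII}. Showing that the A-side skeleton can be deformed into that combinatorial type --- two high-valence vertices at the ends linked by a chain of $K_\phi$-blocks --- is a genuine step. Second, once you transport across HMS for the fiber $F$, the data you have on the B-side is purely categorical: a fiber category $\DD^b_{\dg}(F^\vee)$, spherical objects $E_i$, and monodromy autoequivalences. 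Reconstructing honest \emph{geometric} components $Y_i$ from this data requires that the monodromies be exactly of the form $(-)\otimes\mathscr{L}_j$ (Theorem~\ref{thm:orlov} gives a commutative $\mathbb{P}^1$-bundle only in that case, otherwise the global sections are a noncommutative $\mathbb{P}^1$-bundle as Remark following Theorem~\ref{thm:orlov} warns), and that the directed categories at the end vertices be of the form $\DD^b(Y_0)$, $\DD^b(Y_n)$ for smooth projective $Y_0$, $Y_n$. Neither of these is automatic from HMS for $F$. Third, your d-semistability argument is slightly off: you argue that the \emph{product} of the monodromy line bundles is trivial because the $\lambda_j$ bound collectively, but Theorem~\ref{thm:mono} requires triviality of the monodromy around \emph{each} $\lambda_j$ separately. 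The correct argument is A-side: each $\lambda_j$ is a small loop around a \emph{smooth} fiber $Z_j$ containing no critical values, so symplectic monodromy around it is Hamiltonian-isotopic to the identity, hence its mirror image is trivial. This is also where the hypothesis that the $Z_j$ are smooth is used --- compare with Theorem~\ref{thm:msdeg}, where the removed fiber is singular and the mirror $Z$ fails d-semistability. Surfacing that dependence explicitly would strengthen the proposal.
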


\bibliographystyle{plain}
\bibliography{references}

\begin{thebibliography}{10}

\bibitem{add}
N.~Addington.
\newblock New derived symmetries of some hyperk{\"a}hler varieties.
\newblock {\em Alg. Geom}, 3(2):223--260, 2016.

\bibitem{asp-add}
N.~Addington and P.S. Aspinwall.
\newblock Categories of massless {D}-branes and del {P}ezzo surfaces.
\newblock {\em J. High Energy Phys.}, 2013(7):176, 2013.

\bibitem{al}
R.~Anno and T.~Logvinenko.
\newblock Spherical {DG}-functors.
\newblock {\em J. Eur. Math. Soc.}, 19(9):2577--2656, 2017.

\bibitem{ako1}
D.~Auroux, L.~Katzarkov, and D.~Orlov.
\newblock Mirror symmetry for weighted projective planes and their
  noncommutative deformations.
\newblock {\em Ann. of Math.}, 167(3):867--943, 2008.

\bibitem{bbb}
O.~Ben-Bassat and J.~Block.
\newblock Milnor descent for cohesive dg-categories.
\newblock {\em J. K-Theory}, 12(3):433--459, 2013.

\bibitem{oberwolfach}
A.~Bondal.
\newblock Derived categories of toric varieties.
\newblock In {\em Oberwolfach Reports}, volume~5, pages 284--286, 2006.

\bibitem{bk2}
A.~Bondal and M.~Kapranov.
\newblock Representable functors, {S}erre functors, and mutations.
\newblock {\em Izv. Math.}, 35(3):519--541, 1990.

\bibitem{brid}
T.~Bridgeland.
\newblock Flops and derived categories.
\newblock {\em Invent. Math.}, 147(3):613--632, 2002.

\bibitem{don}
W.~Donovan.
\newblock Perverse schobers and wall crossing.
\newblock {\em Int. Math. Res. Not.}, rnx280, 2017.

\bibitem{dht}
C.~Doran, A.~Harder, and A.~Thompson.
\newblock Mirror symmetry, {T}yurin degenerations and fibrations on
  {C}alabi--{Y}au manifolds.
\newblock In {\em Proc. Symp. Pure Math.}, volume~96, 2016.

\bibitem{dt}
C.~Doran and A.~Thompson.
\newblock Mirror symmetry for lattice polarized del {P}ezzo surfaces.
\newblock {\em Commun. Number Theory Phys.}, 12:543--580, 2017.

\bibitem{kap-dyck}
T.~Dyckerhoff and M.~Kapranov.
\newblock Triangulated surfaces in triangulated categories.
\newblock {\em J. Eur. Math. Soc.}, 20(6):1473--1524, 2018.

\bibitem{fried}
R.~Friedman.
\newblock Global smoothings of varieties with normal crossings.
\newblock {\em Ann. of Math.}, 118(1):75--114, 1983.

\bibitem{bmd}
R.~Friedman and D.~Morrison.
\newblock {\em The birational geometry of degenerations}, volume~29 of {\em
  Progress in Math.}
\newblock Birkhauser, 1983.

\bibitem{ggm}
A.~Galligo, M.~Granger, and P.~Maisonobe.
\newblock {D}-modules et faisceaux pervers dont le support singulier est un
  croisement normal.
\newblock {\em Ann. Inst. Fourier (Grenoble)}, 35(1):1--48, 1985.

\bibitem{gps}
S.~Ganatra, J.~Pardon, and V.~Shende.
\newblock Microlocal {M}orse theory of wrapped {F}ukaya categories.
\newblock {\em preprint arxiv:1809.08807}, 2018.

\bibitem{gmv}
S.~Gelfand, R.~MacPherson, and K.~Vilonen.
\newblock Perverse sheaves and quivers.
\newblock {\em Duke Math. J.}, 83(3):621--643, 1996.

\bibitem{hkk}
F.~Haiden, L.~Katzarkov, and M.~Kontsevich.
\newblock Flat surfaces and stability structures.
\newblock {\em Pub. Math. Inst. Hautes \'Etudes Sci.}, 126(1):247--318, 2017.

\bibitem{hartshorne}
R.~Hartshorne.
\newblock {\em Algebraic geometry}.
\newblock Springer-Verlag, New York-Heidelberg, 1977.
\newblock Graduate Texts in Mathematics, No. 52.

\bibitem{ks1}
M.~Kapranov and V.~Schechtman.
\newblock Perverse schobers.
\newblock {\em preprint arXiv:1411.2772}, 2014.

\bibitem{ks2}
M.~Kapranov and V.~Schechtman.
\newblock Perverse sheaves and graphs on surfaces.
\newblock {\em preprint { arXiv:1601.01789}}, 2016.

\bibitem{kps}
L.~Katzarkov, P.~Pandit, and T.~Spaide.
\newblock Calabi-{Y}au structures, spherical functors, and shifted symplectic
  structures.
\newblock {\em preprint {arXiv:1701.07789}}, 2017.

\bibitem{kn}
Y.~Kawamata and Y.~Namikawa.
\newblock Logarithmic deformations of normal crossing varieties and smoothing
  of degenerate {C}alabi-{Y}au varieties.
\newblock {\em Invent. Math}, 118(1):395--409, 1994.

\bibitem{kontsevich}
M.~Kontsevich.
\newblock Symplectic geometry of homological algebra.
\newblock {\em preprint available at {
  http://www.ihes.fr/~maxim/TEXTS/publicationsfrancais.html}}, 2009.

\bibitem{kulikov}
V.~Kulikov.
\newblock Degenerations of {K3} surfaces and {E}nriques surfaces.
\newblock {\em Izv. Akad. Nauk SSSR Ser. Mat.}, 41(5):1008--1042, 1977.

\bibitem{kuzhyp}
A.~Kuznetsov.
\newblock Hyperplane sections and derived categories.
\newblock {\em Izv. Math.}, 70(3):447, 2006.

\bibitem{kuz}
A.~Kuznetsov.
\newblock Hochschild homology and semiorthogonal decompositions.
\newblock {\em J. Reine Angew. Math.}, 708:213--243, 2015.

\bibitem{kl}
A.~Kuznetsov and V.~Lunts.
\newblock Categorical resolutions of irrational singularities.
\newblock {\em Int. Math. Res. Not.}, 2015(13):4536--4625, 2014.

\bibitem{kperry}
A.~Kuznetsov and A.~Perry.
\newblock Derived categories of cyclic covers and their branch divisors.
\newblock {\em Selecta Math.}, 23(1):389--423, 2017.

\bibitem{lee}
N.-H. Lee.
\newblock Calabi-{Y}au construction by smoothing normal crossing varieties.
\newblock {\em Int. J. Math.}, 21(06):701--725, 2010.

\bibitem{lo}
V.~Lunts and D.~Orlov.
\newblock Uniqueness of enhancement for triangulated categories.
\newblock {\em J. Amer. Math. Soc.}, 23(3):853--908, 2010.

\bibitem{mv1}
R.~MacPherson and K.~Vilonen.
\newblock Elementary construction of perverse sheaves.
\newblock {\em Invent. Math.}, 84(2):403--435, 1986.

\bibitem{mor}
D.. Morrison.
\newblock The {C}lemens-{S}chmid exact sequence and applications.
\newblock In {\em Topics in transcendental algebraic geometry}, volume 106 of
  {\em Ann. of Math. Stud.}, pages 101--119. Princeton Univ. Press, Princeton,
  NJ, 1984.

\bibitem{nad2}
D.~Nadler.
\newblock Wrapped microlocal sheaves on pairs of pants.
\newblock {\em preprint { arXiv:1604.00114}}, 2016.

\bibitem{nad3}
D.~Nadler.
\newblock Arboreal singularities.
\newblock {\em Geom. Topol.}, 21(2):1231--1274, 2017.

\bibitem{nad1}
D.~Nadler.
\newblock A combinatorial calculation of the {L}andau--{G}inzburg model
  ${M}=(\mathbb{C}, {W}=z_1z_2z_3)$.
\newblock {\em Selecta Math.}, 23(1):519--532, 2017.

\bibitem{nz}
D.~Nadler and E.~Zaslow.
\newblock Constructible sheaves and the {F}ukaya category.
\newblock {\em J. Amer. Math. Soc.}, 22(1):233--286, 2009.

\bibitem{of}
A.V. Odesskii and B.L. Feigin.
\newblock Sklyanin elliptic algebras.
\newblock {\em Funct. Anal. Appl.}, 23(3):207--214, 1989.

\bibitem{orl-monoid}
D.~Orlov.
\newblock Projective bundles, monoidal transformations, and derived categories
  of coherent sheaves.
\newblock {\em Izv. Math.}, 41(1):133, 1993.

\bibitem{orl-glue}
D.~Orlov.
\newblock Smooth and proper noncommutative schemes and gluing of {DG}
  categories.
\newblock {\em Adv. Math.}, 302:59--105, 2016.

\bibitem{sib-et}
J.~Pascaleff and N.~Sibilla.
\newblock Topological {F}ukaya category and mirror symmetry for punctured
  surfaces.
\newblock {\em Compos. Math.}, 155(3):599--644, 2019.

\bibitem{perspink}
U.~Persson and H.~Pinkham.
\newblock Degeneration of surfaces with trivial canonical bundle.
\newblock {\em Ann. of Math.}, 113(1):45--66, 1981.

\bibitem{ps}
C.~Peters and J.~Steenbrink.
\newblock {\em Mixed {H}odge structures}, volume~52 of {\em Ergebnisse der
  Mathematik und ihrer Grenzgebiete. 3. Folge. A Series of Modern Surveys in
  Mathematics}.
\newblock Springer-Verlag, Berlin, 2008.

\bibitem{rud}
H.~Ruddat.
\newblock Perverse curves and mirror symmetry.
\newblock {\em J. Algebraic Geom.}, 26(1):17--42, 2017.

\bibitem{seg}
E.~Segal.
\newblock All autoequivalences are spherical twists.
\newblock {\em Int. Math. Res. Not.}, 2018(10):3137--3154, 2017.

\bibitem{seidmut2}
P.~Seidel.
\newblock More about vanishing cycles and mutation.
\newblock In {\em Symplectic geometry and mirror symmetry ({S}eoul, 2000)},
  pages 429--465. World Sci. Publ., River Edge, NJ, 2001.

\bibitem{seidmut1}
P.~Seidel.
\newblock Vanishing cycles and mutation.
\newblock In {\em European {C}ongress of {M}athematics, {V}ol. {II}
  ({B}arcelona, 2000)}, volume 202 of {\em Progr. Math.}, pages 65--85.
  Birkh\"{a}user, Basel, 2001.

\bibitem{seidicm}
P.~Seidel.
\newblock Fukaya categories and deformations.
\newblock In {\em Proceedings of the {I}nternational {C}ongress of
  {M}athematicians, {V}ol. {II} ({B}eijing, 2002)}, pages 351--360. Higher Ed.
  Press, Beijing, 2002.

\bibitem{st}
P.~Seidel and R.~Thomas.
\newblock Braid group actions on derived categories of coherent sheaves.
\newblock {\em Duke Math. J.}, 108(1):37--108, 2001.

\bibitem{sibilla}
N.~Sibilla.
\newblock A note on mapping class group actions on derived categories.
\newblock {\em Proc. Amer. Math. Soc.}, 142(6):1837--1848, 2014.

\bibitem{stz}
N.~Sibilla, D.~Treumann, and E.~Zaslow.
\newblock Ribbon graphs and mirror symmetry.
\newblock {\em Selecta Math.}, 20(4):979--1002, 2014.

\bibitem{Skly}
E.~K. Sklyanin.
\newblock Some algebraic structures connected with the {Y}ang—{B}axter
  equation.
\newblock {\em Funct. Anal. Appl.}, 16(4):263--270, 1982.

\bibitem{sylv}
Z.~Sylvan.
\newblock On partially wrapped {F}ukaya categories.
\newblock {\em J. Topol.}, to appear, 2016.

\bibitem{tabthesis}
G.~Tabuada.
\newblock {\em On the homotopy theory of dg categories}.
\newblock PhD thesis, L'Universit\'e Paris Diderot - Paris 7, 2007.

\bibitem{tab-drinfeld}
G.~Tabuada.
\newblock Homotopy theory of dg categories via localizing pairs and
  {D}rinfeld's dg quotient.
\newblock {\em Homology Homotopy Appl.}, 12(1):187--219, 2010.

\bibitem{tam}
D.~Tamarkin.
\newblock Microlocal category.
\newblock {\em preprint {arXiv:1511.08961}}, 2015.

\bibitem{tsy}
B.~Tsygan.
\newblock A microlocal category associated to a symplectic manifold.
\newblock {\em Algebraic and Analytic Microlocal Analysis}, pages 225--337,
  2018.

\bibitem{tyur}
A.N. Tyurin.
\newblock Fano versus {C}alabi-{Y}au.
\newblock {\em preprint { arXiv:math/0302101}}, 2003.

\bibitem{vdbncP1}
M.~Van~den Bergh.
\newblock Non-commutative $\mathbb{P}^1$-bundles over commutative schemes.
\newblock {\em Transactions of the American Mathematical Society},
  364(12):6279--6313, 2012.

\end{thebibliography}

\end{document}